\documentclass[11pt]{article}

\usepackage[T1]{fontenc}
\usepackage[utf8]{inputenc}
\usepackage{lmodern}
\usepackage[a4paper,margin=1in]{geometry}
\usepackage{graphicx}
\usepackage{multirow}
\usepackage{amsmath,amssymb,amsfonts}
\usepackage{amsthm}
\usepackage{bbm}
\usepackage{mathrsfs}
\usepackage[title]{appendix}
\usepackage{xcolor}
\usepackage{textcomp}
\usepackage{manyfoot}
\usepackage{booktabs}
\usepackage{algorithm}
\usepackage{algorithmicx}
\usepackage{algpseudocode}
\usepackage{listings}
\usepackage{float}
\usepackage{mathtools}
\usepackage{dsfont}
\usepackage[numbers,sort&compress]{natbib}
\usepackage[colorlinks=true,linkcolor=blue,citecolor=blue,urlcolor=blue]{hyperref}

\theoremstyle{plain}
\newtheorem{theorem}{Theorem}[section]
\newtheorem{proposition}{Proposition}[section]
\newtheorem{lemma}{Lemma}[section]
\newtheorem{corollary}{Corollary}[section]
\theoremstyle{definition}
\newtheorem{definition}{Definition}[section]
\newtheorem{example}{Example}[section]
\theoremstyle{remark}
\newtheorem{remark}{Remark}[section]

\newcommand{\ez}{ e_{\mathstrut 0}}

\newcommand{\sd}{s_{\mathstrut d}}
\def\multiset#1#2{\ensuremath{\left(\kern-.3em\left(\genfrac{}{}{0pt}{}{#1}{#2}\right)\kern-.3em\right)}}

\title{Discrete time-multidimensional renewal theory and applications}
\author{Leonidas Kordalis\\
Department of Mathematics, National and Kapodistrian University of Athens\\
Panepistimiopolis, Athens 15784, Greece\\
\texttt{lkordali@math.uoa.gr}
\and
Samis Trevezas\thanks{Corresponding author.}\\
Department of Mathematics, National and Kapodistrian University of Athens\\
Panepistimiopolis, Athens 15784, Greece\\
MICS Laboratory, CentraleSup\'elec -- Universit\'e Paris-Saclay\\
3 Rue Joliot Curie, Gif-sur-Yvette 91190, France\\
\texttt{strevezas@math.uoa.gr}}
\date{}

\begin{document}

\maketitle

\begin{abstract}
We develop a discrete-time renewal framework in which renewal events evolve along multiple time coordinates and the sojourn mechanism is described by a general distribution on the multi-index lattice. The resulting processes, called multi-time renewal chains, are studied through multi-index convolution and the associated algebra of multivariate formal power series. This algebraic formulation gives explicit representations for multi-time renewal equations, constructive coefficient formulas, and practical inversion schemes. For computation, we combine FFT-based multidimensional convolution with Newton-type reciprocal iteration to evaluate renewal quantities on large grids. For asymptotics, we prove strong laws and central limit theorems under proportional growth of the observation horizon, including a general central limit theorem for additive functionals and a Gaussian limit for the renewal counting process in directions with a unique rate-determining coordinate. We also study fixed-horizon observations: the terminal age vector induces a genuinely multivariate right-censoring mechanism, leading to an exact nonparametric maximum likelihood estimator and its asymptotic normality. Applications include a binomial--multiset identity, two-attribute warranty evaluation, alternating-renewal availability computation, and discretization-based approximations of continuous-time bivariate renewal and availability models.
\end{abstract}

\noindent\textbf{Keywords:} multi-time renewal chains; multi-index convolution; Fast Fourier Transform; central limit theorem; nonparametric maximum likelihood estimation.

\section{Introduction}

Renewal models describe systems that ``restart'' after random sojourn times and are a central tool in
applied probability, reliability, and operations research. In many modern applications, however, the
evolution of a system is not naturally indexed by a single time axis: events may be driven
simultaneously by multiple exposure dimensions (e.g.\ calendar age and cumulative usage, operating time
and stress cycles, or workload and environmental conditions). This motivates renewal constructions in
which renewal epochs evolve on a multi-index time grid, and also provides a basic building block for
state-dependent generalizations such as multi-time Markov renewal chains.

The theory of renewal equations in two-dimensional \emph{continuous} time was introduced and developed by
Hunter in a series of papers \cite{hunter1974renewal,hunter1974renewal1,hunter1977renewal}. In that setting,
the bivariate renewal function satisfies a renewal \emph{integral} equation and admits transform
representations, bounds, and asymptotic descriptions. Hunter's framework has proved particularly
influential in reliability, where failure and repair mechanisms may depend on coexisting exposure
variables such as calendar and operational time, or cumulative usage and load cycles
\cite{insua2020advances,jack2009repair,lai2006stochastic,yang2001bivariate}. A prominent example is that of
two-attribute warranty policies, where coverage is constrained by both elapsed time since purchase and
usage of the product \cite{kim2000expected,murthy1995two}.

Beyond the foundational bivariate theory, several authors have aimed to broaden the scope of
multi-time renewal analysis. On the theoretical side, multivariate renewal processes and their
fluctuations have been investigated in probability and statistics; for instance, asymptotic
distributional results for multivariate renewal counts were derived in \cite{niculescu1984asymptotic}.
On the applied side, multi-attribute replacement and maintenance questions naturally lead to \emph{weighted}
multivariate renewal functions and their associated renewal equations \cite{mallor2007multivariate}.
On the computational side, even in two dimensions the renewal function rarely has a closed form, and a
number of approximations have been proposed, including moment-based and discretization-based schemes
\cite{hadji2015two,arunachalam2015approximation}. On the inference side, nonparametric estimation of
two-dimensional/bivariate renewal functions has recently been developed in \cite{harel2019asymptotic}.
Despite this progress, two persistent challenges remain: (i) scaling computations to large multi-index
grids (and beyond the bivariate case), and (ii) connecting renewal \emph{equations} with constructive,
coefficient-level representations that can be exploited both theoretically and algorithmically.

In this work we develop a discrete time-multidimensional renewal framework on $\mathbb{N}^d$. We consider
a renewal chain
\begin{equation*}
S_n=\sum_{i=1}^n X_i,\qquad n\ge 1,
\end{equation*}
where $(X_i)_{i\ge 1}$ is a sequence of i.i.d.\ $d$-dimensional random vectors with nonnegative integer
components. The associated multi-time counting process at $k\in\mathbb{N}^d$ is
\begin{equation*}
N(k)=\sup\{n\in\mathbb{N}: S_n\le_d k\},
\end{equation*}
where $\le_d$ denotes componentwise order. This construction captures the number of renewals occurring
inside the hyper-rectangle $\prod_{u=1}^d[0,k_u]$ and provides a natural discrete analogue of continuous-time
multivariate renewal models. Besides being directly relevant when exposure variables are recorded in
discrete units, the lattice setting also serves as a convenient computational interface for
discretization-based approximations of continuous-time multi-time renewal quantities.

A central tool in our development is the multi-index convolution product of real sequences
on $\mathbb{N}^d$. By identifying sequences with multivariate formal power series, convolution becomes
ordinary multiplication of power series. This viewpoint yields a unified algebraic language for renewal
equations of the form $g=b+f*g$, and leads to explicit solution representations via convolutional inverses
such as $(\ez-f)^{(-1)}$ whenever they exist, where $\ez$ denotes the convolutional identity element. The same convolutional
structure is useful computationally: it enables FFT-accelerated (Fast Fourier Transform) evaluation of multidimensional
convolutions and supports fast inversion strategies based on truncated series arithmetic (Newton-type
reciprocal iteration). These tools are needed in multi-time settings, where direct
summation rapidly becomes prohibitive as the grid size and the dimension increase.

The main contributions of the paper are summarized as follows:
\begin{itemize}
  \item \textbf{Discrete multi-time renewal chains and renewal equations.}
  We introduce a discrete-time $d$-dimensional renewal framework and derive multi-time renewal equations
  for key quantities, including the renewal function and reliability/availability functionals in multi-index time.
  This provides a lattice counterpart to the classical bivariate continuous-time renewal equations
  \cite{hunter1974renewal,hunter1974renewal1}.

  \item \textbf{Algebraic properties of multi-index convolution.}
  We develop basic properties of the convolution algebra of sequences on $\mathbb{N}^d$, including existence
  and constructive representations of convolutional inverses, and finite-sum/recursive formulas for inverse coefficients.
  The resulting coefficient-level formulas can be used both analytically and as building blocks for numerical routines.

  \item \textbf{Efficient computation via FFT and Newton-type inversion.}
  We discuss FFT-based convolution techniques for high-dimensional grids and adapt Newton-type reciprocal iteration
  for computing convolutional inverses in a truncated multivariate power series setting. This gives a scalable
  computational procedure to approaches that approximate renewal quantities indirectly (e.g.\ via moments or coarse
  discretizations in the bivariate continuous-time setting \cite{hadji2015two,arunachalam2015approximation}).

  \item \textbf{Asymptotic results under proportional scaling.}
  Under standard moment conditions and a proportional growth regime $k/|k|_1\to\lambda$, we establish a strong law
  for $N(k)$ and the multi-time renewal function $M(k)$ and a CLT for additive functionals. We also obtain a Gaussian limit for $N(k)$ in directions
  where a unique rate-determining coordinate governs the minimum of the marginal counts, complementing asymptotic
  results in the continuous-time bivariate theory \cite{hunter1974renewal1} and in multivariate renewal asymptotics
  \cite{niculescu1984asymptotic}.

  \item \textbf{Inference from fixed-horizon data.}
  From a single trajectory observed up to a multi-time horizon $m$, we study maximum likelihood estimation of the
  increment law under multivariate right-censoring encoded by the age vector $U_m$. We derive the exact
  nonparametric MLE and establish its asymptotic normality. This addresses a complementary inferential problem to
  existing work on estimating renewal \emph{functions} in multi-time settings \cite{harel2019asymptotic}.

  \item \textbf{Applications.}
  We illustrate the framework in (i)~bivariate reliability/availability modeling for alternating renewal systems,
  (ii)~two-attribute warranty policies, (iii)~a binomial--multiset identity implied by the inverse-coefficient
  formula, and (iv)~discrete approximations of continuous-time bivariate renewal and availability models.
\end{itemize}

The remainder of the paper is organized as follows. Section~\ref{sec:convolution} develops the convolutional
framework and the associated computational tools. Section~\ref{sec:multitime-renewal-theory} introduces multi-time
renewal chains and derives the corresponding renewal equations. Section~\ref{sec:limit} presents asymptotic results
for the counting process and for additive functionals. Section~\ref{sec:mle} develops maximum likelihood estimation
from fixed-horizon observations under multivariate censoring. Section~\ref{sec:Applications} provides numerical and
applied illustrations in reliability, warranty analysis, and continuous-time approximation. The appendices contain
implementation details for FFT-based convolution and a proof of the Newton-type inversion update.

\section{Multi-index convolution algebra and inversion}\label{sec:convolution}

This section develops the algebraic tools that will be used throughout the paper to formulate and solve multi-time renewal equations.
We begin by reviewing the multi-index convolution product of real sequences on $\mathbb{N}^d$ and its representation via multivariate
formal power series. This viewpoint yields an explicit solution of renewal-type convolution equations through convolutional inverses,
and it also provides a natural route to efficient numerical implementation (via FFT-based convolution and fast series inversion).
The probabilistic multi-time renewal model and its counting process are introduced in
Section~\ref{sec:multitime-renewal-theory}, where these algebraic tools are applied to derive
renewal equations for quantities of interest.

\subsection{Discrete multi-time convolution product}\label{subsec:multi-conv}

\paragraph{Notation.}
Throughout, $d\in\mathbb{N}$ is fixed and we use $\mathbb{N}:=\{0,1,2,\ldots\}$.
For $k=(k_1,\ldots,k_d)\in\mathbb{N}^d$ we write $|k|_1:=k_1+\cdots+k_d$.
We denote by $0_d$ the null vector and by $1_d$ the vector of ones in $\mathbb{R}^d$.
For $k,\ell\in\mathbb{N}^d$, the notation $\ell\le_d k$ means componentwise inequality.
When needed (e.g.\ in finite-difference identities), we adopt the standard convention that a sequence $a$ defined on $\mathbb{N}^d$
is extended by $0$ outside $\mathbb{N}^d$, i.e.\ $a(k)=0$ if some component of $k$ is negative.

\subsubsection{Formal power series and multi-index convolution}\label{subsubsec:fps-conv}

Let $a:\mathbb{N}^d\to\mathbb{R}$ be a real $d$-time sequence. We associate to $a$ the multivariate formal power series
\begin{equation}\label{eq:fps}
P_a(x_{1:d})=\sum_{k\in\mathbb{N}^d} a(k)\,x^k,
\qquad
x^k:=\prod_{u=1}^d x_u^{k_u}.
\end{equation}
Here, $\mathbb{R}[[x_1,\ldots,x_d]]$ denotes the ring of formal power series in $d$ indeterminates with real coefficients.
Coefficientwise addition of sequences corresponds to addition of formal power series. The multiplicative structure is given by the
multi-index (Cauchy) product, i.e.\ convolution.

\noindent We first fix the convolution product on the multi-index lattice.
\begin{definition}\label{def:conv}
Let $a,b:\mathbb{N}^d\to\mathbb{R}$. Their (multi-time) convolution product $c=a*b$ is defined by
\begin{equation}\label{eq:conv}
[a*b](k)
:=\sum_{\ell+\ell'=k} a(\ell)\,b(\ell')
=\sum_{\ell\le_d k} a(\ell)\,b(k-\ell),
\qquad k\in\mathbb{N}^d,
\end{equation}
where $\le_d$ denotes componentwise inequality.
\end{definition}

\noindent
\emph{Well-definedness.} For each fixed $k\in\mathbb{N}^d$, the index set
$\{\ell\in\mathbb{N}^d:\ell\le_d k\}$ is finite (of cardinality $\prod_{u=1}^d (k_u+1)$). Hence \eqref{eq:conv} is a finite sum and
$a*b$ is well-defined on $\mathbb{R}^{\mathbb{N}^d}$ without additional summability assumptions.

\noindent The algebraic structure and its power-series representation are recorded in the following proposition.
\begin{proposition}\label{prop:ring}
Let $\mathcal{R}:=\mathbb{R}^{\mathbb{N}^d}$ be the set of all real-valued sequences on $\mathbb{N}^d$.
Equipped with pointwise addition $+$ and the convolution product $*$ in \eqref{eq:conv}, $(\mathcal{R},+,*)$ is a commutative ring
with unity. The identity element (unit) is the sequence $\ez$ defined by
\[
\ez(k)=
\begin{cases}
1, & k=0_d,\\
0, & \text{otherwise}.
\end{cases}
\]
If there exists $b:\mathbb{N}^d\to\mathbb{R}$ such that $a*b=b*a=\ez$, then $a$ is \emph{convolutionally invertible} and $b$ is called
its \emph{convolutional inverse}, denoted by $a^{(-1)}$.

Moreover, the map $\Phi:\mathcal{R}\to \mathbb{R}[[x_1,\ldots,x_d]]$ defined by $\Phi(a)=P_a$ is a ring isomorphism. In particular,
for all $a,b\in\mathcal{R}$,
\[
P_{a+b}(x_{1:d})=P_a(x_{1:d})+P_b(x_{1:d}),\ \
P_{a*b}(x_{1:d})=P_a(x_{1:d})\,P_b(x_{1:d}),\ \
P_{\ez}(x_{1:d})=1.
\]
\end{proposition}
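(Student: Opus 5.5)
The cleanest route is to prove the isomorphism statement first and then transport the ring axioms. Recall that $\mathbb{R}[[x_1,\ldots,x_d]]$ is, by definition, the set of formal sums $\sum_{k} a(k)x^k$. Two such sums are equal iff their coefficient families coincide. Addition is coefficientwise, and multiplication is given by collecting the coefficient of $x^k$ in the formal product, using $x^\ell x^{\ell'}=x^{\ell+\ell'}$. It is classical that this is a commutative ring with unity $1$.

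\textbf{Step 1: $\Phi$ is a bijection.} This is immediate from the definition of formal power series. The map $a\mapsto P_a$ simply records the coefficient family, so it is injective. Every formal series arises from its own coefficient sequence, so it is surjective.

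\textbf{Step 2: $\Phi$ preserves the operations.} Additivity $P_{a+b}=P_a+P_b$ is the definition of coefficientwise addition. For the product, I will compute the coefficient of $x^k$ in $P_aP_b$. It equals $\sum_{\ell+\ell'=k}a(\ell)b(\ell')$. This sum is finite, because $\ell+\ell'=k$ with $\ell,\ell'\in\mathbb{N}^d$ forces $\ell\le_d k$ and $\ell'=k-\ell$; this is the same observation as the well-definedness remark after Definition~\ref{def:conv}. Hence the coefficient is exactly $[a*b](k)$ from \eqref{eq:conv}, giving $P_{a*b}=P_aP_b$. Finally, $P_{\ez}=\ez(0_d)x^{0_d}=1$.

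\textbf{Step 3: transport the ring structure.} Since $\Phi$ is a bijection compatible with $+$ and $*$, every ring axiom in $(\mathcal{R},+,*)$ follows by applying $\Phi^{-1}$ to the corresponding identity in the power series ring. This covers associativity and commutativity of $*$, distributivity, and $\ez$ being the unit. For example, $P_{(a*b)*c}=(P_aP_b)P_c=P_a(P_bP_c)=P_{a*(b*c)}$, and injectivity then gives $(a*b)*c=a*(b*c)$. Because $*$ is commutative, the two-sided condition $a*b=b*a=\ez$ in the definition of the convolutional inverse is automatically consistent. The inverse is also unique, by the usual argument $b=b*\ez=b*(a*b')=(b*a)*b'=b'$, so the notation $a^{(-1)}$ is justified.

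\textbf{Where the care is needed.} The only point needing attention is whether one may take the ring structure of $\mathbb{R}[[x_1,\ldots,x_d]]$ as known. To be self-contained, I would verify associativity of $*$ directly. Both $[(a*b)*c](k)$ and $[a*(b*c)](k)$ equal $\sum_{\ell+\ell'+\ell''=k}a(\ell)b(\ell')c(\ell'')$, which is a finite sum over triples in $\mathbb{N}^d$ and can therefore be rearranged freely. Commutativity follows from the substitution $\ell\leftrightarrow k-\ell$. Distributivity follows from linearity of finite sums in each argument. The identity property reduces to $\sum_{\ell\le_d k}\ez(\ell)a(k-\ell)=a(k)$. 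With these checks, the result does not depend on external facts beyond finite summation.
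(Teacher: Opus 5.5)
The paper gives no proof of this proposition. It states it and then uses it, relying implicitly on the standard identification of $\mathbb{R}^{\mathbb{N}^d}$ with $\mathbb{R}[[x_1,\ldots,x_d]]$. Your argument is correct and supplies the omitted verification.

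On structure: $\mathbb{R}[[x_1,\ldots,x_d]]$ is by definition the set of coefficient families $\mathbb{N}^d\to\mathbb{R}$, with coefficientwise addition and the Cauchy product. So $\Phi$ is essentially the identity map, and your Steps 1 and 2 hold by definition. The transport argument in Step 3 therefore has content only if the ring axioms for power series are taken as known. Your closing direct check is exactly the proof of those axioms:
\begin{itemize}
\item associativity, via the finite sum over $\ell+\ell'+\ell''=k$;
\item commutativity, via the bijection $\ell\mapsto k-\ell$ of $\{\ell:\ell\le_d k\}$;
\item distributivity;
\item the unit property.
\end{itemize}
That check is what makes the argument self-contained.

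Your uniqueness argument for the inverse is a small but genuine addition. It is what makes the notation $a^{(-1)}$ well defined, and the paper leaves this implicit.
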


\paragraph{Convolution powers.}
For $a\in\mathcal{R}$ and $n\in\mathbb{N}$ we define the $n$-fold convolution power by
\[
a^{(0)}:=\ez,
\qquad
a^{(n)}:=a*a^{(n-1)},\quad n\ge 1.
\]
Equivalently, for $n\ge 1$ and $k\in\mathbb{N}^d$,
\begin{equation}\label{eq:nfold}
a^{(n)}(k)=
\sum_{\substack{\ell_1,\ldots,\ell_n\in\mathbb{N}^d\\ \ell_1+\cdots+\ell_n=k}}
a(\ell_1)\cdots a(\ell_n).
\end{equation}
Since $(\mathcal{R},+,*)$ is a commutative ring with unity, the binomial theorem holds for convolution powers:
\begin{equation*}
(a+b)^{(n)}=\sum_{j=0}^{n}\binom{n}{j}\, a^{(j)}*b^{(n-j)},
\qquad n\in\mathbb{N}.
\end{equation*}
In particular,
\begin{equation}\label{eq:binomial-e0}
(\ez-a)^{(n)}=\sum_{j=0}^{n}(-1)^j\binom{n}{j}\,a^{(j)}.
\end{equation}

\subsubsection{A finite-support property for convolution powers}

\begin{lemma}\label{lem:vanishing-powers}
Let $c:\mathbb{N}^d\to\mathbb{R}$ satisfy $c(0_d)=0$.
Then for every $k\in\mathbb{N}^d$,
\begin{equation*}
c^{(n)}(k)=0\qquad\text{for all }n>|k|_1.
\end{equation*}
\end{lemma}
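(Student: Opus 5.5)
The plan is to argue directly from the explicit $n$-fold formula \eqref{eq:nfold}. For $n\ge 1$ and fixed $k\in\mathbb{N}^d$ we have
\[
c^{(n)}(k)=\sum_{\ell_1+\cdots+\ell_n=k} c(\ell_1)\cdots c(\ell_n),
\]
with $\ell_1,\ldots,\ell_n\in\mathbb{N}^d$. The aim is to show that every summand vanishes as soon as $n>|k|_1$.

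Fix one tuple $(\ell_1,\ldots,\ell_n)$ with $\ell_1+\cdots+\ell_n=k$. Since $|\cdot|_1$ is additive on $\mathbb{N}^d$, we get $\sum_{i=1}^n|\ell_i|_1=|k|_1$. Each $|\ell_i|_1$ is a nonnegative integer. If every $\ell_i\neq 0_d$, then each $|\ell_i|_1\ge 1$, and so $|k|_1\ge n$. Hence $n>|k|_1$ forces $\ell_i=0_d$ for at least one $i$ (a pigeonhole step). For that index $c(\ell_i)=c(0_d)=0$, so the product is zero. Every summand of the finite sum vanishes, and therefore $c^{(n)}(k)=0$. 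Note that $n>|k|_1\ge 0$ already gives $n\ge1$, so the case $c^{(0)}=\ez$ never arises.

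As an alternative viewpoint, one could use the ring isomorphism of Proposition~\ref{prop:ring}. The condition $c(0_d)=0$ means $P_c$ has no constant term, so every monomial of $P_c^n$ has total degree at least $n$. The coefficient of $x^k$, which has total degree $|k|_1<n$, must then be zero. One could also argue by induction on $n$ via $c^{(n)}=c*c^{(n-1)}$, showing that $c^{(n)}$ is supported on $\{|k|_1\ge n\}$.

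There is no real obstacle here. The only point needing care is stating the additivity of $|\cdot|_1$ over nonnegative vectors together with the pigeonhole step, and ensuring that the sum in \eqref{eq:nfold} is over a finite index set, which was already noted in the well-definedness remark.
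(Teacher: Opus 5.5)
Your proof is correct and is essentially the paper's argument. Both expand $c^{(n)}(k)$ via the $n$-fold formula, note that a nonzero summand needs every $\ell_i\neq 0_d$, and then use additivity of $|\cdot|_1$ to get $|k|_1\ge n$, which rules out nonzero summands when $n>|k|_1$. Your observation that $n>|k|_1$ already forces $n\ge 1$ is a small extra point that the paper handles implicitly by fixing $n\ge 1$.
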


\begin{proof}
Fix $k\in\mathbb{N}^d$ and $n\ge 1$.
In \eqref{eq:nfold}, any summand with some $\ell_i=0_d$ vanishes because $c(0_d)=0$.
Hence any potentially nonzero term must satisfy $\ell_i\neq 0_d$ for all $i$, which implies $|\ell_i|_1\ge 1$.
Therefore, for any nonzero summand,
\begin{equation*}
|k|_1=\Big|\sum_{i=1}^n \ell_i\Big|_1=\sum_{i=1}^n |\ell_i|_1\ge n.
\end{equation*}
If $n>|k|_1$, there are no such decompositions and thus $c^{(n)}(k)=0$.
\end{proof}

\subsubsection{Convolutional inverse: existence and representations}

\noindent The following theorem characterizes the invertible elements and gives a coefficientwise inverse.
\begin{theorem}\label{thm:inverse-exists}
Let $a:\mathbb{N}^d\to\mathbb{R}$.
Then $a$ is convolutionally invertible if and only if $a(0_d)\neq 0$.
Moreover, if $a(0_d)\neq 0$ and we define the normalized sequence
\begin{equation}\label{eq:normalize-a}
\widetilde a(k):=\frac{a(k)}{a(0_d)},\qquad k\in\mathbb{N}^d,
\end{equation}
then $\widetilde a(0_d)=1$ and the convolutional inverse of $a$ is given coefficientwise by
\begin{equation}\label{eq:inverse-geometric}
a^{(-1)}(k)
=\frac{1}{a(0_d)}\sum_{n=0}^{|k|_1}\big(\ez-\widetilde a\big)^{(n)}(k),
\qquad k\in\mathbb{N}^d.
\end{equation}
Equivalently,
\begin{equation}\label{eq:inverse-binomial}
a^{(-1)}(k)
=\frac{1}{a(0_d)}
\sum_{n=0}^{|k|_1}
(-1)^n
\binom{|k|_1+1}{n+1}\,
\widetilde a^{(n)}(k),
\qquad k\in\mathbb{N}^d.
\end{equation}
\end{theorem}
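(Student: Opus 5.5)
Necessity is immediate: if $a*b=\ez$, evaluating at $k=0_d$ gives $a(0_d)b(0_d)=1$, so $a(0_d)\neq 0$. For sufficiency and the explicit formula, I reduce to the normalized case. If $a(0_d)\neq0$, then $a=a(0_d)\,\widetilde a$ with $\widetilde a(0_d)=1$. Once $\widetilde a^{(-1)}$ is found, $a^{(-1)}=a(0_d)^{-1}\widetilde a^{(-1)}$. Commutativity from Proposition~\ref{prop:ring} makes one-sided inverses two-sided, so only $a*b=\ez$ needs checking.

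For $\widetilde a$, set $c:=\ez-\widetilde a$, so $c(0_d)=0$. Define $b(k):=\sum_{n\ge0}c^{(n)}(k)$. By Lemma~\ref{lem:vanishing-powers} this sum is finite and truncates at $n=|k|_1$, so $b$ is well defined and gives exactly \eqref{eq:inverse-geometric}.

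To verify $\widetilde a*b=\ez$, compute $(\ez-c)*b$ coefficientwise at a fixed $k$. Every coefficient $[c*b](k)$ involves only $b(\ell)$ with $\ell\le_d k$, so all sums are finite. We have
\[
[c*b](k)=\sum_{n\ge0}c^{(n+1)}(k)=\sum_{n\ge1}c^{(n)}(k).
\]
Hence $[b-c*b](k)=c^{(0)}(k)=\ez(k)$. The one point needing care is the interchange of convolution with the (locally finite) series. I would justify it by truncating at $N=|k|_1$ and using Lemma~\ref{lem:vanishing-powers} for $n=N+1$: this gives the telescoping identity
\[
(\ez-c)*\sum_{n=0}^{N}c^{(n)}=\ez-c^{(N+1)},
\]
whose last term vanishes at $k$. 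Alternatively, the same step follows from the isomorphism $\Phi$, since $1/(1-P_c)=\sum P_c^n$ converges $x$-adically when $P_c$ has zero constant term.

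For \eqref{eq:inverse-binomial}, expand each $(\ez-\widetilde a)^{(n)}$ by \eqref{eq:binomial-e0}. Writing $K=|k|_1$, this gives
\[
\sum_{n=0}^{K}\sum_{j=0}^{n}(-1)^j\binom{n}{j}\widetilde a^{(j)}(k).
\]
Swapping the order of summation leaves $\sum_{j=0}^{K}(-1)^j\widetilde a^{(j)}(k)\sum_{n=j}^{K}\binom{n}{j}$. The hockey-stick identity gives $\sum_{n=j}^{K}\binom nj=\binom{K+1}{j+1}$, which yields the stated formula after renaming $j$ as $n$. Note that $\widetilde a^{(j)}(k)$ need not vanish for $j>K$. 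This causes no problem, because the truncation was already done before the expansion, at the level of powers of $c$.

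I expect the only delicate step to be the interchange and truncation argument in the verification. The rest is bookkeeping.
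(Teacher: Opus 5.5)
Your proof is correct and follows the paper's argument step for step. Necessity comes from evaluating at $0_d$. Sufficiency uses normalization and the locally finite geometric series in $c=\ez-\widetilde a$, justified by Lemma~\ref{lem:vanishing-powers}. Formula \eqref{eq:inverse-binomial} then follows from the binomial expansion and the hockey-stick identity.

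Your truncation identity $(\ez-c)*\sum_{n=0}^{N}c^{(n)}=\ez-c^{(N+1)}$ with $N=|k|_1$ spells out the coefficientwise telescoping that the paper invokes only informally, and is a welcome addition.
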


\begin{proof}
If $a^{(-1)}$ exists, then evaluating $a*a^{(-1)}=\ez$ at $0_d$ yields $a(0_d)\,a^{(-1)}(0_d)=1$, hence $a(0_d)\neq 0$.

Conversely, assume $a(0_d)\neq 0$ and define $\widetilde a$ by \eqref{eq:normalize-a}.
Set $c:=\ez-\widetilde a$, so that $c(0_d)=0$.
Define $b:\mathbb{N}^d\to\mathbb{R}$ by
\begin{equation*}
b(k):=\sum_{n=0}^{|k|_1} c^{(n)}(k).
\end{equation*}
This sum is finite for each fixed $k$ by Lemma~\ref{lem:vanishing-powers}.
Using associativity and distributivity of convolution,
\begin{equation*}
(\ez-c)*b
=
\sum_{n=0}^{\infty} c^{(n)}-\sum_{n=0}^{\infty} c^{(n+1)}
=\ez,
\end{equation*}
where the equalities are to be understood coefficientwise (and are legitimate because every coefficient involves only finitely many
nonzero terms by Lemma~\ref{lem:vanishing-powers}). Since $\ez-c=\widetilde a$, we get $\widetilde a*b=\ez$, hence
$a*\big(\frac{1}{a(0_d)}b\big)=\ez$, proving \eqref{eq:inverse-geometric}.

To obtain \eqref{eq:inverse-binomial}, expand $c^{(m)}=(\ez-\widetilde a)^{(m)}$ using \eqref{eq:binomial-e0} and sum over $m=0,\ldots,|k|_1$.
The combinatorial identity $\sum_{m=n}^{r}\binom{m}{n}=\binom{r+1}{n+1}$ (``hockey-stick'' \cite{jones1996generalized}) yields the stated coefficients.
\end{proof}

\subsubsection{Examples}\label{subsubsec:examples}

Throughout the examples below we use the standard convention that a sequence $a$ defined on $\mathbb{N}^d$ is extended by $0$ outside
$\mathbb{N}^d$, i.e.\ $a(k)=0$ whenever some component of $k$ is negative. This is convenient for writing finite-difference identities.

\noindent The summation sequence and its inverse give the following finite-difference example.
\begin{example}\label{ex:summation-difference}
Let $\sd$ denote the constant-one sequence on $\mathbb{N}^d$, i.e.\ $\sd(k)=1$ for all $k\in\mathbb{N}^d$.
Then, for any $a:\mathbb{N}^d\to\mathbb{R}$,
\begin{equation}\label{eq:multi-sum}
(\sd*a)(k)=\sum_{\ell\le_d k} a(\ell),\qquad k\in\mathbb{N}^d.
\end{equation}
Indeed, \eqref{eq:multi-sum} is \eqref{eq:conv} with $\sd(\ell)\equiv 1$, followed by the change of variables $m=k-\ell$.

The generating function of $\sd$ is
\begin{equation*}
P_{\sd}(x_{1:d})=\sum_{k\in\mathbb{N}^d}x^k=\prod_{u=1}^d \frac{1}{1-x_u}.
\end{equation*}
Hence $\sd$ is convolutionally invertible and its inverse $\delta_d:=\sd^{(-1)}$ satisfies
\begin{equation*}
P_{\delta_d}(x_{1:d})=\prod_{u=1}^d (1-x_u)
=
\sum_{\ell\in\{0,1\}^d}(-1)^{|\ell|_1}\,x^\ell.
\end{equation*}
Equivalently, $\delta_d(\ell)=(-1)^{|\ell|_1}$ for $\ell\in\{0,1\}^d$ and $\delta_d(\ell)=0$ otherwise.
Therefore, for any $a$ and any $k\in\mathbb{N}^d$,
\begin{equation}\label{eq:multi-diff}
(\delta_d*a)(k)=\sum_{\ell\le_d 1_d} (-1)^{|\ell|_1}\, a(k-\ell),
\end{equation}
where the extension-by-zero convention ensures that terms with negative indices vanish.
For $d=2$, \eqref{eq:multi-diff} reduces to the usual inclusion--exclusion finite difference:
\begin{equation*}
(\delta_2*a)(k_1,k_2)=a(k_1,k_2)-a(k_1-1,k_2)-a(k_1,k_2-1)+a(k_1-1,k_2-1).
\end{equation*}
\end{example}

\noindent The one-dimensional case gives the following binomial and multiset coefficients.
\begin{example}\label{ex:binomial-multiset}
In one dimension ($d=1$), let $b_1=(1,1,0,0,\ldots)$ so that $P_{b_1}(x)=1+x$.
Then $P_{b_1^{(n)}}(x)=(1+x)^n$, hence $b_1^{(n)}(k)=\binom{n}{k}$.

Similarly, let $s_1:\mathbb{N}\to\mathbb{R}$ be the constant-one sequence $s_1(k)\equiv 1$, so that
$P_{s_1}(x)=1+x+x^2+\cdots=(1-x)^{-1}$.
For $n\ge 1$,
\begin{equation*}
P_{s_1^{(n)}}(x)=P_{s_1}(x)^n=(1-x)^{-n}
=\sum_{k\ge 0}\binom{n+k-1}{k}\,x^k,
\end{equation*}
and therefore
\begin{equation*}
s_1^{(n)}(k)=\binom{n+k-1}{k}=\multiset{n}{k}
=\frac{n(n+1)\cdots(n+k-1)}{k!}.
\end{equation*}
The coefficient $\multiset{n}{k}$ is the number of $k$-combinations from an $n$-element set \emph{with repetition}
(the classical stars-and-bars interpretation).
\end{example}

\noindent The next proposition gives the convolution powers of the constant-one sequence.
\begin{proposition}\label{prop:sd-nfold}
For $n\ge 1$ and $k=(k_1,\ldots,k_d)\in\mathbb{N}^d$,
\begin{equation*}
\sd^{(n)}(k)=\prod_{u=1}^d \multiset{n}{k_u}
=\prod_{u=1}^d \binom{n+k_u-1}{k_u}.
\end{equation*}
\end{proposition}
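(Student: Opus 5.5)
The natural route is through the ring isomorphism $\Phi$ of Proposition~\ref{prop:ring}, which turns convolution powers into ordinary powers of formal power series. From Example~\ref{ex:summation-difference} we have $P_{\sd}(x_{1:d})=\prod_{u=1}^d (1-x_u)^{-1}$. Multiplicativity of $\Phi$, together with an induction on $n$ using $\sd^{(n)}=\sd*\sd^{(n-1)}$, gives
\[
P_{\sd^{(n)}}(x_{1:d})=P_{\sd}(x_{1:d})^n=\prod_{u=1}^d (1-x_u)^{-n}.
\]
It then remains to read off the coefficient of $x^k$ in this product.

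Each factor $(1-x_u)^{-n}$ is a univariate series in $x_u$ alone. By Example~\ref{ex:binomial-multiset} it equals $\sum_{j\ge0}\binom{n+j-1}{j}x_u^{j}$. A product of series in disjoint variables has, as its coefficient of the monomial $x_1^{k_1}\cdots x_d^{k_d}$, exactly the product of the univariate coefficients of $x_u^{k_u}$. This holds because $x^k$ arises from exactly one choice of exponents, namely $j_u=k_u$ in each factor. Hence
\[
\sd^{(n)}(k)=\prod_{u=1}^d\binom{n+k_u-1}{k_u}=\prod_{u=1}^d\multiset{n}{k_u}.
\]

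I could also argue directly at the sequence level, avoiding power series. Note that $\sd=s_1\otimes\cdots\otimes s_1$ is a tensor product, meaning $\sd(k)=\prod_u s_1(k_u)$. In \eqref{eq:nfold} the constraint $\ell_1+\cdots+\ell_n=k$ splits coordinatewise, so the $d$-dimensional sum factorizes into the product of $d$ one-dimensional sums $s_1^{(n)}(k_u)$. These are evaluated in Example~\ref{ex:binomial-multiset}.

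There is no real obstacle here. The only point needing care is to justify the coefficient extraction for the product of separated-variable series (equivalently, the factorization of the sum over decompositions). That is immediate because all sums involved are finite for fixed $k$.
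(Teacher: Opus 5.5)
Your proof is correct and follows the paper's own argument: the paper likewise writes $P_{\sd^{(n)}}=P_{\sd}^n=\prod_{u=1}^d(1-x_u)^{-n}$, expands each factor as $\sum_{k_u\ge 0}\multiset{n}{k_u}x_u^{k_u}$, and extracts the coefficient of $x^k$. Your alternative coordinatewise factorization of \eqref{eq:nfold} is also valid, and it is essentially the counting interpretation the paper gives afterwards in Remark~\ref{rem:sd-nfold-comb}.
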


\begin{proof}
Using generating functions,
\[
P_{\sd^{(n)}}(x_{1:d})=P_{\sd}(x_{1:d})^n
=\prod_{u=1}^d (1-x_u)^{-n}
=\prod_{u=1}^d \left(\sum_{k_u\ge 0}\multiset{n}{k_u}\,x_u^{k_u}\right).
\]
Extracting the coefficient of $x^k=\prod_{u=1}^d x_u^{k_u}$ yields the product form.
\end{proof}

\noindent The preceding formula has the following combinatorial interpretation.
\begin{remark}\label{rem:sd-nfold-comb}
Since $\sd(\ell)\equiv 1$, the definition \eqref{eq:nfold} shows that $\sd^{(n)}(k)$ equals the number of $n$-tuples
$(\ell_1,\ldots,\ell_n)\in(\mathbb{N}^d)^n$ such that $\ell_1+\cdots+\ell_n=k$ (componentwise), i.e.\ the number of weak compositions
of the multi-index $k$ into $n$ parts. Equivalently, $\sd^{(n)}(k)$ counts the number of $n\times d$ nonnegative integer arrays
$(x_{j,u})$ such that $\sum_{j=1}^n x_{j,u}=k_u$ for each coordinate $u$.

This admits a natural ``multi-type'' stars-and-bars interpretation:
for each type $u\in\{1,\ldots,d\}$, distribute $k_u$ identical balls (stars of color $u$) into $n$ labeled bins.
The number of such allocations is $\binom{n+k_u-1}{k_u}$ for each $u$, and independence across types yields the product in
Proposition~\ref{prop:sd-nfold}.

\smallskip
\noindent
\emph{Caution.} If one instead counts \emph{strings} formed by $n-1$ bars together with $k_u$ stars of color $u$ (all symbols arranged in
a single line), then the count is the multinomial coefficient
\[
\binom{n+\sum_{u=1}^d k_u-1}{\,n-1,\;k_1,\ldots,k_d\,}
=\frac{\big(n+\sum_{u=1}^d k_u-1\big)!}{(n-1)!\,k_1!\cdots k_d!},
\]
which records the within-bin interleaving order of colors and is generally larger than $\prod_{u=1}^d \binom{n+k_u-1}{k_u}$.
For instance, when $n=2$ and $k_1=k_2=1$, the multinomial count equals $6$, whereas the product count equals $4$.
\end{remark}

\noindent The convolution notation also expresses the relation between a discrete distribution and its cdf.
\begin{example}\label{ex:cdf-pmf}
Let $f:\mathbb{N}^d\to[0,1]$ be a pmf and define the multi-index cdf
\begin{equation*}
F(k):=\sum_{\ell\le_d k} f(\ell)=\mathbb{P}(X\le_d k),\qquad k\in\mathbb{N}^d.
\end{equation*}
Then $F=\sd*f$ by \eqref{eq:multi-sum}.
Conversely, with $\delta_d$ from Example~\ref{ex:summation-difference}, one has $f=\delta_d*F$, i.e.
\begin{equation*}
f(k)=\sum_{\ell\le_d 1_d} (-1)^{|\ell|_1}\,F(k-\ell),\qquad k\in\mathbb{N}^d,
\end{equation*}
with the convention that $F(k)=0$ if some component of $k$ is negative.
This is the discrete analogue of recovering a density from a cdf via mixed partial derivatives.
\end{example}

\noindent The same notation gives the distribution of sums of independent random vectors.
\begin{example}\label{ex:sums-iid}
Let $X_1,\ldots,X_n$ be i.i.d.\ $\mathbb{N}^d$-valued random vectors with pmf $f$ and cdf $F$ (with respect to $\le_d$), and set
$S_n:=X_1+\cdots+X_n$.
Then the pmf of $S_n$ is $f^{(n)}$.
Moreover, its cdf is
\begin{equation*}
\mathbb{P}(S_n\le_d k)=\sum_{\ell\le_d k} f^{(n)}(\ell)=(\sd*f^{(n)})(k),\qquad k\in\mathbb{N}^d.
\end{equation*}
Using $F=\sd*f$ and associativity/commutativity of convolution, we also obtain
\begin{equation*}
\sd*f^{(n)}=(\sd*f)*f^{(n-1)}=F*f^{(n-1)}=f^{(n-1)}*F,
\qquad n\ge 1.
\end{equation*}
\end{example}

\subsubsection{Computational remarks: convolution and inversion}\label{subsubsec:computational}

We briefly comment on the numerical evaluation of multi-index convolutions and convolutional inverses on finite grids.
Throughout this subsection we consider real-valued sequences supported on finite boxes in $\mathbb{N}^d$.
Fast convolution methods are classical and rely on the FFT \cite{cooley1965algorithm,van1992computational}, while fast power-series inversion via
Newton iteration is standard in computer algebra \cite{brent1978fast,von2013modern}.

\noindent The two basic computational regimes are as follows.
\begin{remark}\label{rem:fft-cost}
Let $a$ and $b$ be supported on the finite boxes
\begin{equation*}
\mathrm{supp}(a)\subseteq \prod_{u=1}^d \{0,\ldots,K_u\},
\qquad
\mathrm{supp}(b)\subseteq \prod_{u=1}^d \{0,\ldots,M_u\}.
\end{equation*}
Then $c:=a*b$ is supported on $\prod_{u=1}^d \{0,\ldots,K_u+M_u\}$.

\smallskip
\noindent\emph{Direct computation.}
Computing all coefficients of $c$ on its full support by the definition \eqref{eq:conv} requires on the order of
\[
\mathcal{O}\!\left(\prod_{u=1}^d (K_u+1)(M_u+1)\right)
\]
arithmetic operations, i.e.\ quadratic in the total number of input coefficients.
In the symmetric case $K_u=M_u$, we get the order $\mathcal{O}\!\left(\prod_{u=1}^d (K_u+1)^2\right)$.

\smallskip
\noindent\emph{FFT-based computation.}
Let $L_u$ be padding lengths satisfying $L_u\ge K_u+M_u+1$ for all $u$, and embed $a$ and $b$ into $d$-dimensional arrays of size
$L_1\times\cdots\times L_d$ by zero-padding.
Then linear convolution can be computed via FFT by exploiting that FFT multiplication corresponds to \emph{circular} convolution
on the padded grid; the padding condition prevents wrap-around (aliasing) and ensures that the circular convolution equals the
desired linear convolution on $\{0,\ldots,K_1+M_1\}\times\cdots\times\{0,\ldots,K_d+M_d\}$.

Let $N_L:=\prod_{u=1}^d L_u$.
A standard separable $d$-dimensional FFT (implemented as successive 1D FFTs along each axis) evaluates the transform and its inverse
in
\[
\mathcal{O}\!\left(N_L\sum_{u=1}^d \log L_u\right)
\]
operations; thus convolution via two forward FFTs, pointwise multiplication, and one inverse FFT has the same order of complexity
\cite{cooley1965algorithm,van1992computational}.
\end{remark}

\noindent The convolutional inverse can also be computed recursively.
\begin{proposition}\label{prop:inv-recursion}
Let $a:\mathbb{N}^d\to\mathbb{R}$ satisfy $a(0_d)\neq 0$ and set $b:=a^{(-1)}$.
Then
\[
b(0_d)=\frac{1}{a(0_d)},
\qquad
b(k)=-\frac{1}{a(0_d)}\sum_{\substack{0_d\le_d \ell <_d k}} b(\ell)\,a(k-\ell),
\quad k\neq 0_d,
\]
where $\ell<_d k$ means $\ell\le_d k$ and $\ell\neq k$.
\end{proposition}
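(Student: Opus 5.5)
The plan is to read off the identity $a*b=\ez$ coefficient by coefficient, using Theorem~\ref{thm:inverse-exists} for existence. Since $a(0_d)\neq 0$, Theorem~\ref{thm:inverse-exists} guarantees that $b=a^{(-1)}$ exists, and by Proposition~\ref{prop:ring} it satisfies $a*b=b*a=\ez$. Writing out $(b*a)(k)$ via Definition~\ref{def:conv}, we get
\[
\sum_{\ell\le_d k} b(\ell)\,a(k-\ell)=\ez(k),\qquad k\in\mathbb{N}^d .
\]

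The first step is the case $k=0_d$. The index set $\{\ell:\ell\le_d 0_d\}$ is just $\{0_d\}$, so the identity reads $b(0_d)a(0_d)=1$, which gives $b(0_d)=1/a(0_d)$.

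The second step is the case $k\neq 0_d$, where $\ez(k)=0$. I would isolate the term $\ell=k$, which contributes $b(k)a(0_d)$. The remaining terms are exactly those with $0_d\le_d\ell<_d k$. Moving them to the other side and dividing by $a(0_d)\neq0$ gives the stated formula.

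There is no substantive obstacle; the only point needing a remark is well-definedness of the recursion. The set $\{\ell:\ell<_d k\}$ is finite, and every such $\ell$ has $|\ell|_1<|k|_1$. So the formula determines $b$ by induction on $|k|_1$ starting from $b(0_d)$, and this also re-proves uniqueness of the inverse independently of Theorem~\ref{thm:inverse-exists}.
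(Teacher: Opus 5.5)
Your proposal is correct and follows the paper's proof: both expand the identity $a*b=\ez$ (you use the equivalent $b*a=\ez$) coefficient by coefficient, isolate the $\ell=k$ term $a(0_d)\,b(k)$, and divide by $a(0_d)\neq 0$. Your treatment of the $k=0_d$ case and your remark on well-definedness by induction on $|k|_1$ are also sound; the latter mirrors Remark~\ref{rem:inv-rec-cost}.
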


\begin{proof}
The identity $a*b=\ez$ gives, for $k\neq 0_d$,
\[
0=(a*b)(k)=a(0_d)\,b(k)+\sum_{\substack{0_d\le_d \ell <_d k}} a(k-\ell)\,b(\ell),
\]
which yields the recursion after rearranging.
\end{proof}

\noindent The partial order in the recursion is handled as follows.
\begin{remark}\label{rem:inv-rec-cost}
Although $<_d$ is only a partial order, the recursion is practically computable by processing indices in increasing total degree:
if $\ell<_d k$, then necessarily $|\ell|_1<|k|_1$, hence all terms on the right-hand side are available once coefficients of total
degree $<|k|_1$ have been computed.
Computing $b(k)$ for all $k$ in a box of $N$ grid points by this direct recursion typically requires $\mathcal{O}(N^2)$ operations.
This motivates the use of FFT-accelerated Newton iteration when large grids are required.
\end{remark}

\paragraph{Newton-type inversion in multivariate formal series (total-degree truncation).}
Let $\mathfrak{m}:=\langle x_1,\ldots,x_d\rangle$ be the maximal ideal of $\mathbb{R}[[x_1,\ldots,x_d]]$.
For $m\ge 1$ and a series $P(x_{1:d})=\sum_k c_k x^k$, we write
\[
P \bmod \mathfrak{m}^m
\quad\text{to denote truncation to total degree }|k|_1<m.
\]
Let $P_a$ be as in \eqref{eq:fps} with $a(0_d)\neq 0$, and seek $P_b$ such that $P_aP_b=1$.
A natural initialization is $P_{b,1}:=a(0_d)^{-1}$, which satisfies $P_aP_{b,1}=1 \bmod \mathfrak{m}$.

Assume inductively that we have a truncated approximation $P_{b,m}$ satisfying
\[
P_a P_{b,m}=1 \quad \bmod \mathfrak{m}^m,
\]
and define the error $E_m:=1-P_aP_{b,m}\in \mathfrak{m}^m$.
Then the Newton update
\begin{equation}\label{eq:newton-multi}
P_{b,2m}
=
P_{b,m}+P_{b,m}\,E_m
=
P_{b,m}\,(2-P_a P_{b,m})
\quad \bmod \mathfrak{m}^{2m}
\end{equation}
produces an approximation that is correct modulo $\mathfrak{m}^{2m}$ (i.e.\ the truncation order doubles).
Indeed, one checks that $P_aP_{b,2m}=1-E_m^2$ and $E_m^2\in\mathfrak{m}^{2m}$.
This is the standard Newton iteration for reciprocal series; see, e.g., \cite{brent1978fast,von2013modern}.

\noindent The Newton step can be combined with FFT multiplication in the following way.
\begin{remark}\label{rem:newton-fft}
To implement \eqref{eq:newton-multi}, one must multiply truncated multivariate series.
When working modulo $\mathfrak{m}^{2m}$ (total-degree truncation), a simple implementation is to embed the truncated series into the
rectangular box of exponent vectors $\{0,\ldots,2m-1\}^d$, perform the polynomial products via $d$-dimensional FFT-based convolution on
a suitably padded grid as in Remark~\ref{rem:fft-cost}, and then discard all coefficients of total degree $\ge 2m$.
This approach is straightforward, although it computes some coefficients that are later discarded.
\end{remark}

\section{Multi-time renewal chains and renewal equations}\label{sec:multitime-renewal-theory}

This section introduces the discrete-time $d$-dimensional renewal model and derives the associated renewal equations.
The presentation is the discrete multi-time analogue of the classical univariate renewal framework (see, e.g.,
\cite{cox1962renewal,feller1971introduction,asmussen2003applied}), and is closely related in spirit to Hunter's continuous-time bivariate renewal theory
\cite{hunter1974renewal,hunter1974renewal1,hunter1977renewal}.

\subsection{Renewal chain and counting process}\label{subsec:renewal-chain-counting}

Let $(X_n)_{n\ge 1}$ be an i.i.d.\ sequence of $\mathbb{N}^d$-valued random vectors,
$X_n=(X_n^{[u]})_{1\le u\le d}$, with pmf $f$ and cdf (with respect to the componentwise order $\le_d$)
\[
f(k):=\mathbb{P}(X_1=k),\qquad
F(k):=\mathbb{P}(X_1\le_d k),\qquad k\in\mathbb{N}^d.
\]

We assume throughout that the increment is non-degenerate in the sense that $f(0_d)<1$.
In several places (in particular, when using finite-sum representations) it is convenient to
adopt the stronger condition
\begin{equation}\label{eq:no-zero-vector}
f(0_d)=0,
\end{equation}
i.e.\ $\mathbb{P}(X_1=0_d)=0$.
Note that \eqref{eq:no-zero-vector} only excludes simultaneous zero increments in all
coordinates; it still allows $\mathbb{P}(X_1^{[u]}=0)>0$ for individual coordinates.
Under \eqref{eq:no-zero-vector}, each renewal increases the $\ell^1$-level by at least one,
and consequently $N(k)\le |k|_1$ almost surely for every $k\in\mathbb{N}^d$.

To ensure that the marginal counting processes $N^{[u]}(k_u)$ are almost surely finite,
we will also assume that $\mathbb{P}(X_1^{[u]}>0)>0$ for each $u\in\{1,\ldots,d\}$ (equivalently, $S_n^{[u]}\to\infty$ almost surely.)

Define the renewal (jump) chain by
\begin{equation*}
S_0=0_d,\qquad S_n=\sum_{i=1}^n X_i,\qquad n\ge 1.
\end{equation*}
For a multi-index $k\in\mathbb{N}^d$, the associated multi-time counting process is
\begin{equation}\label{eq:N-def}
N(k):=\sup\{n\in\mathbb{N}: S_n\le_d k\},
\end{equation}
and its expectation
\[
M(k):=\mathbb{E}[N(k)]
\]
is called the multi-time renewal function.

For each coordinate $u\in\{1,\ldots,d\}$ we also define the marginal counting process
\[
N^{[u]}(k_u):=\sup\{n\in\mathbb{N}: S_n^{[u]}\le k_u\},\qquad k_u\in\mathbb{N}.
\]
Since $S_n\le_d k$ if and only if $S_n^{[u]}\le k_u$ for all $u$, it follows that
\begin{equation}\label{eq:N-min-marginals}
N(k)=\min_{1\le u\le d} N^{[u]}(k_u),\qquad k\in\mathbb{N}^d.
\end{equation}

\begin{remark}\label{rem:N-S}
From \eqref{eq:N-def} we have the equivalence
\[
N(k)\ge n \quad \Longleftrightarrow \quad S_n\le_d k,\qquad n\in\mathbb{N},\ k\in\mathbb{N}^d.
\]
\end{remark}

\subsection{Distributions of $S_n$ and $N(k)$}\label{subsec:dist-identities}

Let $f^{(n)}$ denote the $n$-fold convolution power of $f$ (with $f^{(0)}=\ez$).
Then $f^{(n)}$ is the pmf of $S_n$.
We denote by
\[
F_n(k):=\mathbb{P}(S_n\le_d k),\qquad k\in\mathbb{N}^d,
\]
the cdf of $S_n$.
Using conditioning on $S_{n-1}$ (or Example~\ref{ex:sums-iid}), for $n\ge 1$ we have the convolution identity
\begin{equation}\label{eq:F-n-conv}
F_n = f^{(n-1)}*F.
\end{equation}
\noindent The distribution of the multi-time count is obtained from the distributions of the renewal epochs.
\begin{proposition}\label{prop:dist-N}
Fix $k\in\mathbb{N}^d$. Then for all $n\in\mathbb{N}$,
\[
\overline{F}_{N(k)}(n):=\mathbb{P}(N(k)\ge n)=F_n(k),
\]
and consequently
\begin{eqnarray*}
F_{N(k)}(n)&:=&\mathbb{P}(N(k)\le n)=1-F_{n+1}(k),\\
f_{N(k)}(n)&:=&\mathbb{P}(N(k)=n)=F_n(k)-F_{n+1}(k).
\end{eqnarray*}
\end{proposition}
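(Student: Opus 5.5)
The plan is to derive everything from the event equivalence in Remark~\ref{rem:N-S}, which reads $\{N(k)\ge n\}=\{S_n\le_d k\}$ for every $n\in\mathbb{N}$. Taking probabilities gives $\mathbb{P}(N(k)\ge n)=\mathbb{P}(S_n\le_d k)=F_n(k)$ by the definition of $F_n$. For $n=0$ both sides equal $1$, since $S_0=0_d\le_d k$ and $F_0=\sd*\ez$ evaluated at $k$ equals $1$.

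The one point that needs care, and which I expect to be the only real obstacle, is justifying Remark~\ref{rem:N-S} and the finiteness of $N(k)$. The supremum in \eqref{eq:N-def} could in principle be $+\infty$, or not be attained. I would handle this with monotonicity. Since the increments are in $\mathbb{N}^d$, the sequence $(S_n)$ is nondecreasing for $\le_d$, so the set $\{n: S_n\le_d k\}$ is an initial segment $\{0,1,\dots,N(k)\}$ of $\mathbb{N}$. Hence $S_n\le_d k$ holds if and only if $n\le N(k)$.

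For finiteness, I would use the standing assumption $\mathbb{P}(X_1^{[u]}>0)>0$, which gives $S_n^{[u]}\to\infty$ almost surely. Then $N(k)\le N^{[u]}(k_u)<\infty$ almost surely by \eqref{eq:N-min-marginals}. Under the stronger condition \eqref{eq:no-zero-vector} one simply has $N(k)\le|k|_1$. Either way $N(k)$ is an almost surely finite $\mathbb{N}$-valued random variable, and the tail identity is legitimate.

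The remaining identities are then routine consequences for an $\mathbb{N}$-valued variable. First, $\mathbb{P}(N(k)\le n)=1-\mathbb{P}(N(k)\ge n+1)=1-F_{n+1}(k)$. Second, $\mathbb{P}(N(k)=n)=\mathbb{P}(N(k)\ge n)-\mathbb{P}(N(k)\ge n+1)=F_n(k)-F_{n+1}(k)$.
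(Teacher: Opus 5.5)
Your proof is correct and follows the paper's route: the tail identity comes from the event equivalence $\{N(k)\ge n\}=\{S_n\le_d k\}$ of Remark~\ref{rem:N-S}, and the cdf and pmf then follow by complementation and differencing. Your monotonicity argument (that $\{n: S_n\le_d k\}$ is an initial segment of $\mathbb{N}$) and the almost sure finiteness of $N(k)$ supply justification that the paper leaves implicit in that remark.
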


\begin{proof}
By Remark~\ref{rem:N-S},
\begin{equation}\label{surv-mean}
 \mathbb{P}(N(k)\ge n)=\mathbb{P}(S_n\le_d k)=F_n(k),   
\end{equation}

which gives the survival function.
Since $N(k)$ is integer-valued, $\{N(k)\le n\}$ is the complement of $\{N(k)\ge n+1\}$, hence
\[
F_{N(k)}(n)=1-\overline{F}_{N(k)}(n+1)=1-F_{n+1}(k).
\]
Finally,
\[
f_{N(k)}(n)=\mathbb{P}(N(k)\ge n)-\mathbb{P}(N(k)\ge n+1)=F_n(k)-F_{n+1}(k).
\]
\end{proof}

\subsection{Renewal equation for $M$ and solution via convolution inverse}\label{subsec:renewal-equation-solution}

\noindent The renewal function satisfies the following multi-time renewal equation.
\begin{proposition}\label{prop:renewal-eq-M}
The renewal function $M$ satisfies the discrete multi-time renewal equation
\begin{equation}\label{eq:renewal-eq-M}
M = F + f*M,
\end{equation}
i.e.\ for all $k\in\mathbb{N}^d$,
\[
M(k)=F(k)+\sum_{\ell\le_d k} f(\ell)\,M(k-\ell).
\]
\end{proposition}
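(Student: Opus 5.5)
We will derive \eqref{eq:renewal-eq-M} from the series representation of $M$. By Proposition~\ref{prop:dist-N} and the tail-sum formula for nonnegative integer random variables,
\[
M(k)=\mathbb{E}[N(k)]=\sum_{n\ge 1}\mathbb{P}(N(k)\ge n)=\sum_{n\ge 1}F_n(k),
\]
which is a series of nonnegative terms (possibly infinite a priori). Using \eqref{eq:F-n-conv}, $F_1=F$ and $F_n=f^{(n-1)}*F=f*\big(f^{(n-2)}*F\big)=f*F_{n-1}$ for $n\ge 2$, by associativity and commutativity of $*$ (Proposition~\ref{prop:ring}).

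Summing over $n$ then gives, for each fixed $k$,
\[
M(k)=F(k)+\sum_{n\ge 2}(f*F_{n-1})(k)=F(k)+\sum_{\ell\le_d k} f(\ell)\sum_{m\ge 1}F_m(k-\ell)=F(k)+(f*M)(k).
\]
The interchange of the finite sum over $\ell\le_d k$ with the series over $n$ is justified by Tonelli, since all terms are nonnegative. So the identity holds in $[0,\infty]$ in full generality.

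The only real point of care is finiteness of $M(k)$, which we need so that the equation is a genuine identity of real sequences that can later be solved with $(\ez-f)^{(-1)}$. Under \eqref{eq:no-zero-vector} this is immediate, since $N(k)\le|k|_1$ almost surely; equivalently, $F_n(k)=0$ for $n>|k|_1$ by Lemma~\ref{lem:vanishing-powers}. Under only $f(0_d)<1$ we argue as follows. Every renewal either has $X_i=0_d$ or increases $|S|_1$ by at least one, so $N(k)$ is dominated by a sum of at most $|k|_1+1$ independent geometric runs of zero increments. Hence $\mathbb{E}[N(k)]\le (|k|_1+1)/(1-f(0_d))<\infty$. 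Alternatively, one can simply note that the identity is valid in the extended sense regardless.

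A pathwise proof is also possible: condition on $X_1$ and use $N(k)=\mathbf 1\{X_1\le_d k\}\,(1+N'(k-X_1))$, where $N'$ is the counting process of the shifted chain. This gives the same equation, and we keep it as a remark.
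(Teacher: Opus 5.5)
Your proof is correct, but it takes a different route from the paper. The paper argues pathwise by a first-step decomposition. Conditioning on $X_1$, it writes $N(k)=\mathbf{1}_{\{X_1\le_d k\}}\bigl(1+N'(k-X_1)\bigr)$, where $N'$ is the counting process of the shifted chain $(X_{n+1})_{n\ge1}$, independent of $X_1$, and then takes expectations. This is exactly the argument you keep as a closing remark.

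You instead work with distributions. From $\mathbb{P}(N(k)\ge n)=F_n(k)$ (Proposition~\ref{prop:dist-N}) and the recursion $F_n=f*F_{n-1}$, obtained from \eqref{eq:F-n-conv} by associativity and commutativity, the renewal equation becomes a termwise identity for the nonnegative series $\sum_{n\ge1}F_n$. Tonelli then justifies the exchange of sums.

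The paper's argument is shorter and needs no information on the law of $N(k)$. It is also the same first-step template the paper reuses for the availability equation \eqref{eq:availability-renewal-eq}. Yours stays closer to the convolution algebra of Section~\ref{sec:convolution}, and it gives two things the paper's proof does not:
\begin{itemize}
\item the representation $M=\sum_{n\ge1}F_n$ also when $f(0_d)>0$, whereas Corollary~\ref{cor:M-explicit} states it only for $f(0_d)=0$;
\item an explicit proof that $M(k)<\infty$. The paper's proof leaves this implicit, yet it is needed to regard \eqref{eq:renewal-eq-M} as an identity in $\mathcal{R}$ and to invert $\ez-f$.
\end{itemize}
Your bound is right. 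A clean way to state it is $N(k)+1\le T_{|k|_1+1}$, where $T_j$ is the index of the $j$-th nonzero increment, a negative binomial variable with mean $j/(1-f(0_d))$.
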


\begin{proof}
Condition on the first increment $X_1$.
If $X_1\not\le_d k$, then $N(k)=0$.
If $X_1\le_d k$, then after the first renewal the remaining number of renewals inside the shifted box is distributed as
$N(k-X_1)$ (by independence and stationarity of increments), hence
\[
N(k)=\mathbf{1}_{\{X_1\le_d k\}}\big(1+N'(k-X_1)\big),
\]
where $N'$ is an independent copy of $N$.
Taking expectations gives
\[
M(k)=\mathbb{P}(X_1\le_d k)+\sum_{\ell\le_d k} f(\ell)\,M(k-\ell)
=F(k)+(f*M)(k).
\]
\end{proof}

\noindent The preceding renewal equation gives the following explicit representation of the renewal function.
\begin{corollary}\label{cor:M-explicit}
Assume $f(0_d)<1$ so that $(\ez-f)(0_d)=1-f(0_d)\neq 0$ and $(\ez-f)$ is convolutionally invertible.
Then the renewal function admits the explicit representation
\begin{equation}\label{eq:renewal-function}
M=(\ez-f)^{(-1)}*F.
\end{equation}
If in addition $f(0_d)=0$, then $(\ez-f)^{(-1)}=\sum_{n\ge 0} f^{(n)}$ coefficientwise and
\[
M(k)=\sum_{n\ge 1} F_n(k)=\sum_{n\ge 1}\mathbb{P}(S_n\le_d k),
\qquad k\in\mathbb{N}^d,
\]
where the sum is finite for each fixed $k$.
\end{corollary}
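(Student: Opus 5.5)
The plan is to rewrite the renewal equation of Proposition~\ref{prop:renewal-eq-M} in ring form and invert. By distributivity in $(\mathcal{R},+,*)$ (Proposition~\ref{prop:ring}), $M=F+f*M$ is equivalent to $(\ez-f)*M=F$. Since $(\ez-f)(0_d)=1-f(0_d)\neq 0$, Theorem~\ref{thm:inverse-exists} gives an inverse $(\ez-f)^{(-1)}$. Convolving both sides with it and using associativity gives $M=(\ez-f)^{(-1)}*F$. The solution is unique within $\mathcal{R}$, so no growth or summability condition on $M$ is needed. Note that $M(k)$ is finite: under the standing assumption $\mathbb{P}(X_1^{[u]}>0)>0$, $S_n^{[u]}\to\infty$ a.s., so $N(k)<\infty$ a.s. In the case $f(0_d)=0$, $N(k)\le|k|_1$, so $M(k)$ is finite outright. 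In the general case one can instead read $M$ as the unique $\mathcal{R}$-solution given by the recursion of Proposition~\ref{prop:inv-recursion}.

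For the second claim, assume $f(0_d)=0$. Apply Theorem~\ref{thm:inverse-exists} with $a=\ez-f$, so $a(0_d)=1$, $\widetilde a=a$, and $\ez-\widetilde a=f$. Formula \eqref{eq:inverse-geometric} then gives $(\ez-f)^{(-1)}(k)=\sum_{n=0}^{|k|_1}f^{(n)}(k)$. By Lemma~\ref{lem:vanishing-powers}, $f^{(n)}(k)=0$ for $n>|k|_1$, so this equals the coefficientwise series $\sum_{n\ge0}f^{(n)}$.

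Next, for each fixed $k$, convolve with $F$. The convolution at $k$ only involves indices $\ell\le_d k$, and each of these has finitely many nonzero terms. We may therefore swap the finite sums:
\[
M(k)=\sum_{n\ge0}(f^{(n)}*F)(k)=\sum_{n\ge0}F_{n+1}(k)=\sum_{n\ge1}F_n(k),
\]
using \eqref{eq:F-n-conv}. Finiteness of the sum follows because $F_n(k)=\mathbb{P}(S_n\le_d k)$ with $S_n$ having pmf $f^{(n)}$. Each $f^{(m)}(\ell)$ with $\ell\le_d k$ vanishes for $m>|k|_1$ by Lemma~\ref{lem:vanishing-powers}, so $F_n(k)=0$ for $n>|k|_1$.

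As a sanity check, this agrees with the tail-sum formula $M(k)=\sum_{n\ge1}\mathbb{P}(N(k)\ge n)$ from Proposition~\ref{prop:dist-N}.

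The only delicate point is justifying the interchange of the infinite coefficientwise sum with convolution, and that is settled by the finite-support Lemma~\ref{lem:vanishing-powers}.
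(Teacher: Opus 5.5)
Your argument is correct and is the paper's own: rewrite $M=F+f*M$ as $(\ez-f)*M=F$ and invert; then, when $f(0_d)=0$, truncate the geometric expansion via Lemma~\ref{lem:vanishing-powers} and identify $f^{(n)}*F=F_{n+1}$ via \eqref{eq:F-n-conv}, with your version making the finite interchange of sums explicit. One loose end is your finiteness aside, which the paper does not discuss: $N(k)<\infty$ a.s.\ does not give $M(k)<\infty$, and reading $M$ as the $\mathcal{R}$-solution is circular, but the fix is easy, since $S_n\le_d k$ forces at most $|k|_1$ of $X_1,\ldots,X_n$ to be nonzero, so $N(k)+1$ is at most the index of the $(|k|_1+1)$-th nonzero increment and hence $M(k)\le(|k|_1+1)/(1-f(0_d))$.
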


\begin{proof}
From \eqref{eq:renewal-eq-M}, we have $(\ez-f)*M=F$.
Since $(\ez-f)$ is invertible, multiplying both sides by $(\ez-f)^{(-1)}$ yields \eqref{eq:renewal-function}.
If $f(0_d)=0$, then $f^{(n)}(k)=0$ for all $n>|k|_1$, so the geometric expansion 
\begin{equation*}
(\ez-f)^{(-1)}=\sum_{n\ge0} f^{(n)}
\end{equation*}
holds coefficientwise, and combining with \eqref{eq:F-n-conv} gives the stated series representation for $M$.
\end{proof}

\subsection{A general multi-time renewal equation}\label{subsec:general-renewal-equation}

Many multi-time quantities satisfy convolution fixed-point equations on $\mathbb{N}^d$.
Throughout this subsection, $*$ denotes the multi-index convolution on the relevant lattice
($\mathbb{N}^d$, $\mathbb{N}^m$, or $\mathbb{N}$), and $\ez$ and $\sd$ denote the corresponding identity
and constant-one sequences (the underlying dimension will always be clear from context).

\noindent We use the following form for a general discrete multi-time renewal equation.
\begin{definition}\label{def:general-renewal}
Let $f:\mathbb{N}^d\to[0,1]$ be a pmf and let $b:\mathbb{N}^d\to\mathbb{R}$ be given.
A (discrete) multi-time renewal equation is of the form
\begin{equation}\label{eq:renewal-equation}
g=b+f*g,
\end{equation}
where $g:\mathbb{N}^d\to\mathbb{R}$ is the unknown.
\end{definition}

\noindent The next theorem gives existence and uniqueness of the solution.
\begin{theorem}\label{thm:general-renewal-solution}
If $f(0_d)<1$, then $(\ez-f)$ is convolutionally invertible and \eqref{eq:renewal-equation}
has a unique solution given by
\begin{equation}\label{eq:general-solution}
g=(\ez-f)^{(-1)}*b.
\end{equation}
\end{theorem}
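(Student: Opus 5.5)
The plan is to work entirely inside the commutative ring $(\mathcal{R},+,*)$ of Proposition~\ref{prop:ring} and use the invertibility criterion of Theorem~\ref{thm:inverse-exists}. First, I verify invertibility. The sequence $a:=\ez-f$ satisfies $a(0_d)=1-f(0_d)$, and the hypothesis $f(0_d)<1$ makes this strictly positive, hence nonzero. Theorem~\ref{thm:inverse-exists} then yields a unique two-sided inverse $(\ez-f)^{(-1)}$. It is given explicitly by \eqref{eq:inverse-geometric}, or recursively by Proposition~\ref{prop:inv-recursion}.

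Next, I rewrite the equation. Using distributivity and $\ez*g=g$, equation \eqref{eq:renewal-equation} is equivalent to
\[
(\ez-f)*g=b.
\]
For \emph{existence}, I set $g:=(\ez-f)^{(-1)}*b$. Associativity then gives $(\ez-f)*g=\big((\ez-f)*(\ez-f)^{(-1)}\big)*b=\ez*b=b$, so $g$ solves \eqref{eq:renewal-equation}. For \emph{uniqueness}, if $g$ is any solution, I convolve $(\ez-f)*g=b$ on the left with $(\ez-f)^{(-1)}$ and use associativity, which forces $g=(\ez-f)^{(-1)}*b$. Equivalently, if $g_1,g_2$ both solve the equation, their difference $h$ satisfies $(\ez-f)*h=0$, and multiplying by the inverse gives $h=0$.

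No summability or growth conditions on $b$ or $g$ are needed, since every convolution involved is a finite sum coefficientwise (the well-definedness remark after Definition~\ref{def:conv}). There is no real obstacle here. The only point deserving care is that uniqueness holds in the whole space $\mathcal{R}$ of arbitrary real sequences, not just bounded ones, and this is exactly what the ring-theoretic argument delivers. As a sanity check, I could add a direct argument independent of the ring structure. Rearranging \eqref{eq:renewal-equation} at index $k$ gives $(1-f(0_d))g(k)=b(k)+\sum_{0_d\le_d\ell<_d k}f(k-\ell)g(\ell)$. This determines $g(k)$ by induction on $|k|_1$, as in Remark~\ref{rem:inv-rec-cost}, and that induction proves existence and uniqueness simultaneously.
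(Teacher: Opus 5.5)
Your proof is correct and follows essentially the same route as the paper. The paper also rewrites the equation as $(\ez-f)*g=b$, gets invertibility from $(\ez-f)(0_d)=1-f(0_d)\neq 0$, multiplies by the inverse, and proves uniqueness via $(\ez-f)*(g_1-g_2)=0$. You additionally make the existence check explicit through associativity and add an independent inductive argument on $|k|_1$, which the paper does not need but which is sound.
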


\begin{proof}
Equation \eqref{eq:renewal-equation} is equivalent to $(\ez-f)*g=b$.
Since $(\ez-f)(0_d)=1-f(0_d)\neq 0$, the convolutional inverse exists and multiplying by
$(\ez-f)^{(-1)}$ yields \eqref{eq:general-solution}.
Uniqueness follows similarly: if $g_1,g_2$ solve \eqref{eq:renewal-equation}, then
$(\ez-f)*(g_1-g_2)=0$ and hence $g_1=g_2$.
\end{proof}

\noindent Instantaneous renewals may be removed by the embedded construction described below.
\begin{remark}
\label{rem:no-batch}
Assume $p_0:=f(0_d)\in[0,1)$ and set $q_0:=1-p_0$.
If $p_0>0$, the increment distribution has an atom at $0_d$, which corresponds to
\emph{instantaneous renewals} (renewals that do not advance any time coordinate).
A standard way to exclude such instantaneous events is to work with the \emph{embedded}
increment distribution obtained by conditioning away the atom at $0_d$.

Define $f^{*}:\mathbb{N}^d\to[0,1]$ by
\begin{equation}\label{eq:f-star-def}
f^{*}(0_d):=0,
\qquad
f^{*}(k):=\frac{f(k)}{q_0}\ \ \text{for }k\neq 0_d.
\end{equation}
Equivalently, $X^{*}\stackrel{d}{=}(X\mid X\neq 0_d)$ and
\begin{equation}\label{eq:f-decompose}
f = p_0\,\ez + q_0\,f^{*}.
\end{equation}
Operationally, the embedded (batch-free) renewal chain is obtained by deleting all indices
$n$ with $X_n=0_d$; the remaining increments are i.i.d.\ with pmf $f^{*}$.

\smallskip
\noindent\emph{Effect on the renewal kernel and on renewal equations.}
From \eqref{eq:f-decompose} we have
\begin{equation}\label{eq:kernel-scale}
\ez-f = q_0(\ez-f^{*})
\quad\Longrightarrow\quad
(\ez-f)^{(-1)}=\frac{1}{q_0}\,(\ez-f^{*})^{(-1)}.
\end{equation}
Consequently, any renewal equation $g=b+f*g$ can be rewritten as
\begin{equation}\label{eq:renewal-eq-embedded}
g=\frac{1}{q_0}\,b + f^{*}*g,
\end{equation}
and therefore
\begin{equation}\label{eq:g-embedded-solution}
g=(\ez-f)^{(-1)}*b
=\frac{1}{q_0}\,(\ez-f^{*})^{(-1)}*b.
\end{equation}

\smallskip
\noindent\emph{Effect on the renewal function.}
Recall that the multi-time cdf of $X$ is $F:=\sd*f$ and the renewal function is
$M:=\mathbb{E}[N]=(\ez-f)^{(-1)}*F$. Define similarly
$F^{*}:=\sd*f^{*}$ and
\begin{equation}\label{eq:M-star-def}
M^{*}:=(\ez-f^{*})^{(-1)}*F^{*},
\end{equation}
the renewal function of the embedded chain.
From \eqref{eq:f-decompose} we also have
\begin{equation}\label{eq:F-decompose}
F = p_0\,\sd + q_0\,F^{*}.
\end{equation}
Combining \eqref{eq:kernel-scale}--\eqref{eq:F-decompose} yields the practical conversion
formula
\begin{equation}\label{eq:M-conversion}
M
=\frac{1}{q_0}\,M^{*}+\frac{p_0}{q_0}\,\sd.
\end{equation}
In particular, $M$ is computable from the batch-free model $(f^{*},F^{*},M^{*})$ and the
single scalar $p_0=f(0_d)$.

\smallskip
\noindent\emph{Finite-sum expansion when $f^{*}(0_d)=0$.}
Since $f^{*}(0_d)=0$, the convolutional inverse $(\ez-f^{*})^{(-1)}$ admits the
coefficientwise geometric expansion
$(\ez-f^{*})^{(-1)}=\sum_{n\ge 0}(f^{*})^{(n)}$, and all standard finite-sum identities
from the no-instantaneous case apply to the embedded chain.
\smallskip
\noindent
\emph{Neumann-series representation.}
More generally, let $\alpha\in\mathbb{R}$ and assume $|\alpha|\,f(0_d)<1$.
Then the coefficientwise series
\begin{equation*}
\psi_\alpha := \sum_{n\ge 0} \alpha^n f^{(n)}
\end{equation*}
is well defined (each coefficient $\psi_\alpha(k)$ is finite) and satisfies
\begin{equation*}
(\ez-\alpha f)*\psi_\alpha=\ez,
\end{equation*}
hence $\psi_\alpha=(\ez-\alpha f)^{(-1)}$.
In particular, for $\alpha=1$ one recovers
$(\ez-f)^{(-1)}=\sum_{n\ge 0} f^{(n)}$
even when $f(0_d)>0$.
When $f(0_d)=0$, the series truncates at $n\le |k|_1$ for each fixed $k$ by Lemma~\ref{lem:vanishing-powers}.
\end{remark}

\noindent Coordinate projections lead to the following marginal renewal chains.
\begin{definition}\label{def:marginal-renewal}
Let $U\subseteq\{1,\ldots,d\}$ be a nonempty index set with $|U|=m$ and let
$\pi_U:\mathbb{N}^d\to\mathbb{N}^m$ be the coordinate projection.
Define the projected increments $X_n^{[U]}:=\pi_U(X_n)$.
Then $(X_n^{[U]})_{n\ge 1}$ is an i.i.d.\ sequence on $\mathbb{N}^m$ with pmf
\begin{equation}\label{eq:f-marginal}
f^{[U]}(k_U):=\mathbb{P}(X_1^{[U]}=k_U),
\qquad k_U\in\mathbb{N}^m.
\end{equation}
Assume $f^{[U]}(0_m)<1$ (non-degeneracy in the projected coordinates), and define the
projected renewal chain and counting process by
\begin{equation}\label{eq:proj-chain}
S_0^{[U]}=0_m,
\qquad
S_n^{[U]}=\sum_{i=1}^n X_i^{[U]},
\qquad n\ge 1,
\end{equation}
and
\begin{equation}\label{eq:proj-count}
N^{[U]}(k_U):=\sup\{n\in\mathbb{N}: S_n^{[U]}\le_m k_U\},
\qquad k_U\in\mathbb{N}^m,
\end{equation}
where $\le_m$ denotes componentwise order in $\mathbb{N}^m$.
Its renewal function is
\begin{equation*}
M^{[U]}(k_U):=\mathbb{E}[N^{[U]}(k_U)].
\end{equation*}
\smallskip
\noindent
In particular, for $U=\{u\}$ one recovers the univariate marginal renewal process
$S_n^{[u]}=\sum_{i=1}^n X_i^{[u]}$ and the marginal counting process
$N^{[u]}(k_u)=\sup\{n:S_n^{[u]}\le k_u\}$.
\end{definition}

\noindent The renewal equation is preserved under coordinate projection.
\begin{remark}\label{rem:proj-stability}
For each nonempty $U$ as in Definition~\ref{def:marginal-renewal}, define the projected cdf
$F^{[U]}:=\sd*f^{[U]}$.
Then the same renewal equation and solution representation hold on $\mathbb{N}^m$:
\begin{equation}\label{eq:proj-renewal-eq}
M^{[U]} = F^{[U]} + f^{[U]}*M^{[U]},
\qquad
M^{[U]} = (\ez-f^{[U]})^{(-1)}*F^{[U]}.
\end{equation}
If $f^{[U]}(0_m)>0$, one may remove instantaneous renewals in the \emph{projected} model
by repeating the embedded construction of Remark~\ref{rem:no-batch}, i.e.\ by defining
$f^{[U],*}(0_m)=0$ and $f^{[U],*}(k_U)=f^{[U]}(k_U)/(1-f^{[U]}(0_m))$ for $k_U\neq 0_m$.
\end{remark}

\noindent The second mixed moment of two marginal counts can be expressed through a bivariate renewal function.
\begin{proposition}
\label{prop:cov-marginals}
Fix two distinct coordinates $u\neq v$ and times $k_u,k_v\in\mathbb{N}$.
Let $M^{[u]}(k_u)=\mathbb{E}[N^{[u]}(k_u)]$ and $M^{[v]}(k_v)=\mathbb{E}[N^{[v]}(k_v)]$.
Let $M^{[u,v]}:\mathbb{N}^2\to\mathbb{R}$ denote the \emph{bivariate} renewal function of
the projected increment vector $(X^{[u]},X^{[v]})$, i.e.
$M^{[u,v]}(k_u,k_v)=\mathbb{E}[N^{[u,v]}(k_u,k_v)]$ with
$N^{[u,v]}(k_u,k_v)=\sup\{n:(S_n^{[u]},S_n^{[v]})\le_2(k_u,k_v)\}$.

Define the univariate renewal kernels
$\psi_u:=(\ez-f^{[u]})^{(-1)}$ and $\psi_v:=(\ez-f^{[v]})^{(-1)}$ on $\mathbb{N}$, and
define $\widetilde{\psi}_{u,v}:\mathbb{N}^2\to\mathbb{R}$ by
\begin{equation}\label{eq:psi-tilde-uv}
\widetilde{\psi}_{u,v}(r_u,r_v)
=
\left\{
\begin{array}{ll}
\psi_u(r_u), & \text{if } r_u>0 \text{ and } r_v=0,\\[0.2em]
\psi_v(r_v), & \text{if } r_v>0 \text{ and } r_u=0,\\[0.2em]
\psi_u(0)+\psi_v(0)-1, & \text{if } r_u=r_v=0,\\[0.2em]
0, & \text{otherwise.}
\end{array}
\right.
\end{equation}
Then
\begin{equation}\label{eq:second-moment-uv}
\mathbb{E}\!\left[N^{[u]}(k_u)\,N^{[v]}(k_v)\right]
=
\big[M^{[u,v]}*\widetilde{\psi}_{u,v}\big](k_u,k_v),
\end{equation}
and consequently
\begin{equation}\label{eq:cov-uv}
\mathrm{Cov}\!\left(N^{[u]}(k_u),\,N^{[v]}(k_v)\right)
=
\big[M^{[u,v]}*\widetilde{\psi}_{u,v}\big](k_u,k_v)
-
M^{[u]}(k_u)\,M^{[v]}(k_v).
\end{equation}
\end{proposition}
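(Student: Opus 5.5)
The plan is to expand both marginal counts as sums of indicators and split the resulting double sum according to the relative order of the two renewal indices. By Remark~\ref{rem:N-S} applied to the projected chains, $N^{[u]}(k_u)=\sum_{m\ge 1}\mathbf 1\{S_m^{[u]}\le k_u\}$ and $N^{[v]}(k_v)=\sum_{n\ge 1}\mathbf 1\{S_n^{[v]}\le k_v\}$. Hence
\[
\mathbb{E}\big[N^{[u]}(k_u)N^{[v]}(k_v)\big]=\sum_{m,n\ge 1}\mathbb{P}\big(S_m^{[u]}\le k_u,\ S_n^{[v]}\le k_v\big).
\]
All terms are nonnegative, so Tonelli's theorem justifies every rearrangement below; identities in $[0,\infty]$ suffice. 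I will split the sum into the three regions $m=n$, $m<n$ and $m>n$.

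The diagonal $m=n$ gives $\sum_{n\ge1}\mathbb{P}\big((S_n^{[u]},S_n^{[v]})\le_2(k_u,k_v)\big)=M^{[u,v]}(k_u,k_v)$. This uses the series representation $M=\sum_{n\ge1}F_n$ of Corollary~\ref{cor:M-explicit}, or directly Proposition~\ref{prop:dist-N} summed over $n$, which is valid even with an atom at $0$.

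For $m<n$, write $n=m+j$ with $j\ge1$ and $S_n^{[v]}=S_m^{[v]}+T_j^{[v]}$, where $T_j^{[v]}:=\sum_{i=m+1}^{m+j}X_i^{[v]}$. Then $T_j^{[v]}$ is independent of $(S_m^{[u]},S_m^{[v]})$ and has pmf $(f^{[v]})^{(j)}$. Conditioning on $S_m^{[v]}=b$ gives
\[
\sum_{j\ge1}\mathbb{P}\big(T_j^{[v]}\le k_v-b\big)=M^{[v]}(k_v-b).
\]
The key identity is then $M^{[v]}=\psi_v*F^{[v]}=\psi_v*s_1*f^{[v]}=s_1*(\psi_v-\ez)$. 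The last equality uses $\psi_v*f^{[v]}=\psi_v-\ez$, which follows from $\psi_v*(\ez-f^{[v]})=\ez$. Hence $M^{[v]}(r)=\sum_{0\le s\le r}(\psi_v-\ez)(s)$. Substituting this and exchanging sums, the $m<n$ block becomes
\[
\sum_{s\ge0}(\psi_v-\ez)(s)\sum_{m\ge1}\mathbb{P}\big(S_m^{[u]}\le k_u,\ S_m^{[v]}\le k_v-s\big)=\sum_{s=0}^{k_v}(\psi_v-\ez)(s)\,M^{[u,v]}(k_u,k_v-s).
\]
By symmetry, the $m>n$ block equals $\sum_{r=0}^{k_u}(\psi_u-\ez)(r)\,M^{[u,v]}(k_u-r,k_v)$.

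Adding the three blocks yields $\big[M^{[u,v]}*\kappa\big](k_u,k_v)$ with kernel
\[
\kappa(r_u,r_v)=\ez(r_u,r_v)+(\psi_u-\ez)(r_u)\mathbf 1\{r_v=0\}+(\psi_v-\ez)(r_v)\mathbf 1\{r_u=0\}.
\]
On the axes $r_u>0,r_v=0$ this equals $\psi_u(r_u)$, and symmetrically it equals $\psi_v(r_v)$ when $r_v>0,r_u=0$. At the origin it equals $1+(\psi_u(0)-1)+(\psi_v(0)-1)=\psi_u(0)+\psi_v(0)-1$, and it vanishes elsewhere. So $\kappa=\widetilde\psi_{u,v}$, proving \eqref{eq:second-moment-uv}, and \eqref{eq:cov-uv} follows by subtracting $M^{[u]}M^{[v]}$.

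The main obstacle is the off-diagonal step. It needs the correct conditional-independence decomposition, and it needs the identity $M^{[v]}=s_1*(\psi_v-\ez)$ when $f^{[v]}(0)>0$. That case requires $f^{[v]}(0)<1$ so that $\psi_v$ exists, together with the Neumann-series remark $\psi_v=\sum_j (f^{[v]})^{(j)}$, which justifies summing over $j$.
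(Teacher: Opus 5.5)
Your proof is correct and follows the same route as the paper. You expand $\mathbb{E}[N^{[u]}(k_u)N^{[v]}(k_v)]$ as $\sum_{m,n\ge 1}\mathbb{P}(S_m^{[u]}\le k_u,\,S_n^{[v]}\le k_v)$, split it into the three regions $m=n$, $m<n$, $m>n$, and identify the off-diagonal blocks as axis convolutions of $M^{[u,v]}$ with $\psi_v-\ez$ and $\psi_u-\ez$; the only cosmetic difference is that the paper conditions on the increment $S_{m+j}^{[v]}-S_m^{[v]}$ and sums $(f^{[v]})^{(j)}$ over $j$ directly, whereas you condition on $S_m^{[v]}$ and then unpack $M^{[v]}=s_1*(\psi_v-\ez)$, which is the same computation.
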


\begin{proof}
Using the tail-sum identity for nonnegative integer-valued random variables,
\begin{equation}\label{eq:tail-sum-product}
\mathbb{E}\!\left[N^{[u]}(k_u)\,N^{[v]}(k_v)\right]
=
\sum_{n_u\ge 1}\sum_{n_v\ge 1}
\mathbb{P}\!\left(N^{[u]}(k_u)\ge n_u,\ N^{[v]}(k_v)\ge n_v\right).
\end{equation}
By the basic equivalence $N^{[u]}(k_u)\ge n_u \Longleftrightarrow S_{n_u}^{[u]}\le k_u$,
we can rewrite each summand in \eqref{eq:tail-sum-product} as
$\mathbb{P}(S_{n_u}^{[u]}\le k_u,\ S_{n_v}^{[v]}\le k_v)$.

Split the double sum into the three cases $n_u=n_v$, $n_u<n_v$, and $n_v<n_u$.
The diagonal part is exactly the bivariate renewal function:
\begin{equation}\label{eq:diag-part}
\sum_{n\ge 1}\mathbb{P}(S_n^{[u]}\le k_u,\ S_n^{[v]}\le k_v)
=
M^{[u,v]}(k_u,k_v).
\end{equation}
Consider next $n_u<n_v$ and write $n_v=n_u+m$ with $m\ge 1$.
By independence of increments after time $n_u$, conditioning on
$R_v:=S_{n_u+m}^{[v]}-S_{n_u}^{[v]}$ yields
\begin{equation}\label{eq:cond-step}
\mathbb{P}(S_{n_u}^{[u]}\le k_u,\ S_{n_u+m}^{[v]}\le k_v)
=
\sum_{r_v\le k_v}
\mathbb{P}(S_{n_u}^{[u]}\le k_u,\ S_{n_u}^{[v]}\le k_v-r_v)\,
f^{[v],(m)}(r_v),
\end{equation}
where $f^{[v],(m)}$ is the $m$-fold convolution power of the marginal pmf $f^{[v]}$.
Summing \eqref{eq:cond-step} over $m\ge 1$ gives the factor
$\sum_{m\ge 1} f^{[v],(m)}(r_v)=\psi_v(r_v)-\ez(r_v)$, which equals $\psi_v(r_v)$ for
$r_v>0$ and equals $\psi_v(0)-1$ for $r_v=0$.
Summing also over $n_u\ge 1$ identifies the remaining term as $M^{[u,v]}(k_u,k_v-r_v)$.

By symmetry, the contribution of the region $n_v<n_u$ produces the analogous term
with $\psi_u$ along the other axis. Collecting the diagonal term \eqref{eq:diag-part}
together with the two off-diagonal contributions yields precisely the convolution form
\eqref{eq:second-moment-uv} with the axis-supported kernel \eqref{eq:psi-tilde-uv}.
Finally, \eqref{eq:cov-uv} follows from the identity
$\mathrm{Cov}(X,Y)=\mathbb{E}[XY]-\mathbb{E}[X]\mathbb{E}[Y]$.
\end{proof}

\noindent The generating-function form of the renewal equation is the following.
\begin{remark}\label{rem:pgf-renewal}
Using the ring isomorphism $a\mapsto P_a$ (Proposition~\ref{prop:ring}),
the renewal equation \eqref{eq:renewal-equation} is equivalent to
\begin{equation}\label{eq:pgf-renewal-identity}
P_g(x_{1:d})
=
P_b(x_{1:d}) + P_f(x_{1:d})\,P_g(x_{1:d})
\qquad\text{in }\mathbb{R}[[x_1,\ldots,x_d]].
\end{equation}
Since the constant term of $P_f$ is $P_f(0,\ldots,0)=f(0_d)$, the series
$1-P_f$ has nonzero constant term if and only if $f(0_d)<1$.
In that case it admits a unique reciprocal $(1-P_f)^{-1}\in\mathbb{R}[[x_1,\ldots,x_d]]$,
and we obtain
\begin{equation}\label{eq:pgf-renewal-solution}
P_g(x_{1:d})
=
\big(1-P_f(x_{1:d})\big)^{-1} P_b(x_{1:d})
=
\frac{P_b(x_{1:d})}{1-P_f(x_{1:d})}.
\end{equation}
(If one evaluates these formal series at $|x_u|\le 1$, then $P_f$ coincides with the
multivariate probability generating function of $X_1$.)
\end{remark}

The representation \eqref{eq:general-solution} will be used repeatedly in the sequel
to express reliability and availability functionals as solutions of multi-time renewal
equations, and to develop asymptotic results for renewal counts and additive
functionals.

\section{Limit theorems}\label{sec:limit}

In this section we establish limit results for the multi-time renewal chain
$S_n=\sum_{m=1}^n X_m$ and its associated counting process
\begin{equation}\label{eq:N-limit-def}
N(k)=\sup\{n\in\mathbb{N}: S_n\le_d k\},
\qquad k\in\mathbb{N}^d,
\end{equation}
as defined in subsection~\ref{subsec:renewal-chain-counting}.
We work under the standing assumptions that the increments
$X_1,X_2,\ldots$ are i.i.d.\ $\mathbb{N}^d$-valued random vectors with
\begin{equation}\label{eq:mu-u-def}
\mu_u:=\mathbb{E}\!\left[X_1^{[u]}\right]\in(0,\infty),
\qquad u=1,\ldots,d.
\end{equation}
Whenever needed for second-order limits, we also assume
$\mathbb{E}\!\left[(X_1^{[u]})^2\right]<\infty$ for the relevant coordinates.

For $k=(k_1,\ldots,k_d)\in\mathbb{N}^d$, we set $|k|_1:=k_1+\cdots+k_d$.
We study asymptotics along proportional growth directions as follows.
Fix $\lambda=(\lambda^1,\ldots,\lambda^d)$ with $\lambda^u\in(0,1)$ and
$\sum_{u=1}^d \lambda^u=1$, and write
\begin{equation}\label{eq:lambda-scaling}
k\stackrel{\lambda}{\to}\infty
\quad\Longleftrightarrow\quad
|k|_1\to\infty
\ \ \text{and}\ \ 
\frac{k_u}{|k|_1}\to \lambda^u,\quad u=1,\ldots,d.
\end{equation}
(Under \eqref{eq:lambda-scaling}, each coordinate $k_u\to\infty$ since $\lambda^u>0$.)
Define the directional renewal rate
\begin{equation}\label{eq:mu-lambda}
\mu_\lambda:=\min_{1\le u\le d}\left\{\frac{\lambda^u}{\mu_u}\right\}.
\end{equation}

\subsection{Convergence results}\label{subsec:limit-conv}

We begin with a basic random-index lemma (a multidimensional version of a standard
subsequence principle). As usual in multi-index asymptotics, limits are understood
along deterministic sequences $k^{(m)}\in\mathbb{N}^d$ with $k^{(m)}_u\to\infty$.

\begin{lemma}\label{lem:random-index}
Let $(Y_n)_{n\ge 1}$ be random variables with
$Y_n\xrightarrow[n\to\infty]{a.s.} Y$.
Let $(T(k))_{k\in\mathbb{N}^d}$ be integer-valued random variables such that, for every
deterministic sequence $k^{(m)}\in\mathbb{N}^d$ with $k^{(m)}_u\to\infty$ for all $u$,
\begin{equation*}
T\!\left(k^{(m)}\right)\xrightarrow[m\to\infty]{a.s.}\infty.
\end{equation*}
Then, along the same class of sequences $k^{(m)}$,
\begin{equation*}
Y_{T(k^{(m)})}\xrightarrow[m\to\infty]{a.s.} Y.
\end{equation*}
\end{lemma}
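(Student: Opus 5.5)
Fix a deterministic sequence $k^{(m)}\in\mathbb{N}^d$ with $k^{(m)}_u\to\infty$ for every $u$. The argument is the standard pathwise intersection argument, with the multi-index entering only through this fixed sequence.

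First define the two exceptional sets
\[
\Omega_1:=\{\omega: Y_n(\omega)\to Y(\omega)\},
\qquad
\Omega_2:=\{\omega: T(k^{(m)})(\omega)\to\infty \text{ as } m\to\infty\}.
\]
By hypothesis $\mathbb{P}(\Omega_1)=1$. Applying the assumption on $T$ to the chosen sequence $k^{(m)}$ gives $\mathbb{P}(\Omega_2)=1$. Hence $\mathbb{P}(\Omega_1\cap\Omega_2)=1$.

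Next, on $\Omega_1\cap\Omega_2$ the claim is a deterministic statement about real sequences. Fix $\omega$ in this set and $\varepsilon>0$. Choose $n_0$ such that $|Y_n(\omega)-Y(\omega)|<\varepsilon$ for all $n\ge n_0$. Then choose $m_0$ such that $T(k^{(m)})(\omega)\ge n_0$ for $m\ge m_0$. For $m\ge m_0$ this gives $|Y_{T(k^{(m)})(\omega)}(\omega)-Y(\omega)|<\varepsilon$. One small technical point: $T$ may take the value $0$, or $Y_0$ may be undefined, for finitely many $m$. This is harmless, because only large $m$ matter, and we can set $Y_0:=0$ by convention.

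The only genuine subtlety is the order of quantifiers. The null set $\Omega_2^c$ depends on the sequence $k^{(m)}$, and there are uncountably many such sequences. The conclusion is therefore stated, as in the lemma, for each fixed sequence separately, and we never claim a single null set that works simultaneously over all $k\to\infty$. We should also note that $Y_{T(k)}$ is measurable, since $Y_{T(k)}=\sum_n Y_n\mathbf{1}_{\{T(k)=n\}}$. Apart from these points, the proof is routine.
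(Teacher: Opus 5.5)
Your proof is correct. It is the standard pathwise argument the paper has in mind: the paper states this lemma without proof and calls it ``a standard subsequence principle,'' and your intersection of the two probability-one events for a fixed sequence $k^{(m)}$, followed by composing the deterministic limits, is exactly that argument. One small fix: an integer-valued $T$ could also be negative for finitely many $m$, so the convention should be $Y_n:=0$ for all $n\le 0$, not only for $n=0$.
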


\begin{lemma}\label{lem:Sn-N-diverge}
For each coordinate $u=1,\ldots,d$,
\begin{equation*}
\frac{S_n^{[u]}}{n}\xrightarrow[n\to\infty]{a.s.}\mu_u,
\qquad\text{hence}\qquad
S_n^{[u]}\xrightarrow[n\to\infty]{a.s.}\infty.
\end{equation*}
Moreover, for any deterministic sequence $k^{(m)}\in\mathbb{N}^d$ such that
$k^{(m)}_u\to\infty$ for all $u$, one has
\begin{equation*}
N\!\left(k^{(m)}\right)\xrightarrow[m\to\infty]{a.s.}\infty.
\end{equation*}
\end{lemma}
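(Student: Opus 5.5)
The first assertion is the classical strong law of large numbers applied coordinatewise. The sequence $(X_m^{[u]})_{m\ge1}$ is i.i.d., nonnegative and integrable with mean $\mu_u\in(0,\infty)$ by \eqref{eq:mu-u-def}. Kolmogorov's SLLN therefore gives $S_n^{[u]}/n\to\mu_u$ almost surely. Since $\mu_u>0$, on this event $S_n^{[u]}\ge n\mu_u/2$ for all large $n$, and so $S_n^{[u]}\to\infty$ almost surely. Intersecting the $d$ full-measure events gives a single event $\Omega_0$ of probability one on which this holds for all coordinates simultaneously.

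For the divergence of $N(k^{(m)})$, I would use the min-representation \eqref{eq:N-min-marginals}, $N(k)=\min_u N^{[u]}(k_u)$. This reduces the problem to showing $N^{[u]}(k_u)\to\infty$ almost surely as $k_u\to\infty$ for each fixed $u$. A finite minimum of sequences diverging to $+\infty$ diverges, and the exceptional null sets can be taken independent of the sequence $k^{(m)}$.

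\textbf{Finiteness.} On $\Omega_0$, $S_n^{[u]}\to\infty$, so $\{n: S_n^{[u]}\le k_u\}$ is finite for every $k_u$. Hence $N^{[u]}(k_u)<\infty$, and it is nondecreasing in $k_u$ because $S_n^{[u]}$ is nondecreasing in $n$.

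\textbf{Divergence.} Fix $\omega\in\Omega_0$ and $n_0\in\mathbb{N}$. The value $S_{n_0}^{[u]}(\omega)$ is a finite integer. As soon as $k_u\ge S_{n_0}^{[u]}(\omega)$, the analogue of Remark~\ref{rem:N-S} for the marginal chain gives $N^{[u]}(k_u)\ge n_0$. Thus $\liminf_{k_u\to\infty}N^{[u]}(k_u)(\omega)\ge n_0$ for every $n_0$, which is the claim. Applying this along $k^{(m)}_u\to\infty$ for each $u$ and taking the minimum over $u$ gives $N(k^{(m)})\to\infty$ on $\Omega_0$.

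I expect no real obstacle. The only point needing care is the multi-index, sequence-dependent formulation of the almost-sure statement. The argument above handles it by working on the fixed event $\Omega_0$ and proving pointwise divergence for every $\omega\in\Omega_0$. That event does not depend on the chosen sequence $k^{(m)}$, so the convergence holds for all such sequences simultaneously, not just for each sequence separately.
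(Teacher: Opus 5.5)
Your proof is correct and follows essentially the paper's argument: you apply the coordinatewise SLLN for the first part, and for the second you note that for fixed $n_0$ the finite vector $S_{n_0}(\omega)$ is eventually dominated by $k^{(m)}$, which forces $N(k^{(m)})(\omega)\ge n_0$. Routing this through the marginals $N^{[u]}$ and the min-representation is just the coordinatewise form of the paper's direct observation that $S_{n_0}(\omega)\le_d k^{(m)}$ for all large $m$; note also that this step does not even need the event $\Omega_0$, since $S_{n_0}(\omega)$ is finite for every $\omega$.
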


\begin{proof}
The coordinatewise strong law of large numbers gives
$S_n^{[u]}/n\to\mu_u$ almost surely for each $u$, and since $\mu_u>0$ this implies
$S_n^{[u]}\to\infty$ almost surely.

Let $(k^{(m)})$ be deterministic with $k^{(m)}_u\to\infty$ for all $u$.
Fix $\omega$ in the almost sure event where all coordinatewise SLLNs hold.
Then for each fixed $n\in\mathbb{N}$, the vector $S_n(\omega)$ is finite, and since
$k^{(m)}\to\infty$ componentwise there exists an index $m_n(\omega)$ such that
$S_n(\omega)\le_d k^{(m)}$ for all $m\ge m_n(\omega)$.
By the definition \eqref{eq:N-limit-def}, this implies
$N(k^{(m)})(\omega)\ge n$ for all $m\ge m_n(\omega)$.
Since $n$ is arbitrary, we conclude that $N(k^{(m)})(\omega)\to\infty$.
\end{proof}

Recall that the multi-time counting process satisfies
$N(k)=\min_{u} N^{[u]}(k_u)$, where
$N^{[u]}(k_u):=\sup\{n\in\mathbb{N}: S_n^{[u]}\le k_u\}$.
We will use the classical univariate renewal SLLN (which holds under $\mu_u\in(0,\infty)$,
even when $\mathbb{P}(X_1^{[u]}=0)>0$):
\begin{equation}\label{eq:univariate-SLLN}
\frac{N^{[u]}(k_u)}{k_u}\xrightarrow[k_u\to\infty]{a.s.}\frac{1}{\mu_u},
\qquad u=1,\ldots,d.
\end{equation}

We now give the directional strong law for $N$.
\begin{proposition}\label{prop:SLLN-N}
Under \eqref{eq:lambda-scaling},
\begin{equation}\label{eq:SLLN-N}
\frac{N(k)}{|k|_1}\xrightarrow[k\stackrel{\lambda}{\to}\infty]{a.s.}\mu_\lambda.
\end{equation}
\end{proposition}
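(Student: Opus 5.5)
The plan is to reduce the statement to the marginal renewal strong laws \eqref{eq:univariate-SLLN}, using the min-representation \eqref{eq:N-min-marginals}, $N(k)=\min_u N^{[u]}(k_u)$, and the continuity of $\min$. As in Lemma~\ref{lem:random-index}, the convergence $k\stackrel{\lambda}{\to}\infty$ is read along an arbitrary deterministic sequence $k^{(m)}$ with $|k^{(m)}|_1\to\infty$ and $k^{(m)}_u/|k^{(m)}|_1\to\lambda^u$. Since $\lambda^u>0$, each $k^{(m)}_u\to\infty$.

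For each $u$ we write
\[
\frac{N^{[u]}(k_u)}{|k|_1}=\frac{N^{[u]}(k_u)}{k_u}\cdot\frac{k_u}{|k|_1}.
\]
The second factor tends to $\lambda^u$ deterministically. The first factor tends almost surely to $1/\mu_u$ by \eqref{eq:univariate-SLLN}, since $k^{(m)}_u\to\infty$.

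One point needs care. \eqref{eq:univariate-SLLN} is a limit over the whole integer parameter $k_u\to\infty$. Its almost sure event $\Omega_u$ therefore does not depend on the chosen sequence: on $\Omega_u$, $N^{[u]}(j)/j\to1/\mu_u$ as $j\to\infty$ through all integers, so it holds along any subsequence $k_u^{(m)}\to\infty$, even one that is not monotone. If one wants a justification of \eqref{eq:univariate-SLLN} itself, the usual sandwich works: $S^{[u]}_{N^{[u]}(j)}\le j<S^{[u]}_{N^{[u]}(j)+1}$, combined with $S^{[u]}_n/n\to\mu_u$ from Lemma~\ref{lem:Sn-N-diverge} and $N^{[u]}(j)\to\infty$ through Lemma~\ref{lem:random-index}. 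This also gives finiteness of $N^{[u]}$, because $\mu_u>0$.

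On the intersection $\Omega_0=\bigcap_{u=1}^d\Omega_u$, which has probability one and is a finite intersection, the vector $\big(N^{[u]}(k_u)/|k|_1\big)_{u}$ converges to $(\lambda^u/\mu_u)_u$. The map $\min:\mathbb{R}^d\to\mathbb{R}$ is continuous, and $\min_u N^{[u]}(k_u)/|k|_1=N(k)/|k|_1$. Hence on $\Omega_0$ we get $N(k)/|k|_1\to\min_u\lambda^u/\mu_u=\mu_\lambda$, as defined in \eqref{eq:mu-lambda}.

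The only step that is more than routine is the uniformity just noted: the null set must be chosen independently of the direction sequence $k^{(m)}$. This holds because each marginal limit is taken over all of $\mathbb{N}$, so the almost sure convergence transfers to every admissible sequence at once. Everything else is continuity of $\min$.
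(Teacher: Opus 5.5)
Your proof is correct and follows the paper's argument essentially verbatim. Like the paper, you use the min-representation $N(k)=\min_u N^{[u]}(k_u)$, the factorization $\frac{N^{[u]}(k_u)}{k_u}\cdot\frac{k_u}{|k|_1}$, the univariate renewal strong law \eqref{eq:univariate-SLLN}, and then a finite minimum. Your extra remark that the exceptional null set can be chosen independently of the sequence $k^{(m)}$, because each marginal strong law holds over all integers, is a rigor point the paper leaves implicit, and you handle it correctly.
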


\begin{proof}
Using $N(k)=\min_{1\le u\le d}N^{[u]}(k_u)$, we have
\begin{equation*}
\frac{N(k)}{|k|_1}
=
\min_{1\le u\le d}\left\{\frac{N^{[u]}(k_u)}{|k|_1}\right\}
=
\min_{1\le u\le d}\left\{\frac{N^{[u]}(k_u)}{k_u}\cdot \frac{k_u}{|k|_1}\right\}.
\end{equation*}
Under \eqref{eq:lambda-scaling}, for each fixed $u$ we have
$k_u\to\infty$ and $k_u/|k|_1\to\lambda^u$, and by \eqref{eq:univariate-SLLN} we obtain
almost surely
\begin{equation*}
\frac{N^{[u]}(k_u)}{k_u}\cdot \frac{k_u}{|k|_1}
\xrightarrow[k\stackrel{\lambda}{\to}\infty]{a.s.}
\frac{1}{\mu_u}\cdot \lambda^u.
\end{equation*}
Taking the minimum over $u=1,\ldots,d$ (a finite minimum) yields \eqref{eq:SLLN-N}.
\end{proof}

Let $M(k)=\mathbb{E}[N(k)]$ be the multi-time renewal function.
To transfer the almost sure limit to expectations, it is convenient to separate the
case $f(0_d)=0$ (direct domination) from $f(0_d)>0$ (reduction to the embedded chain
of Remark~\ref{rem:no-batch}).

\begin{proposition}\label{prop:SLLN-M}
Assume \eqref{eq:mu-u-def} and $f(0_d)<1$. Under \eqref{eq:lambda-scaling},
\begin{equation}\label{eq:SLLN-M}
\frac{M(k)}{|k|_1}\xrightarrow[k\stackrel{\lambda}{\to}\infty]{}\mu_\lambda.
\end{equation}
\end{proposition}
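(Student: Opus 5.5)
We upgrade the almost sure convergence of Proposition~\ref{prop:SLLN-N} to convergence of expectations by dominated convergence, handled separately in the two regimes indicated before the statement. Throughout, we work along an arbitrary deterministic sequence $k^{(m)}$ with $k^{(m)}\stackrel{\lambda}{\to}\infty$. Since the limit $\mu_\lambda$ does not depend on the sequence, this suffices.

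\emph{Case $f(0_d)=0$.} Here $N(k)\le |k|_1$ almost surely, as noted after \eqref{eq:no-zero-vector}. Hence $0\le N(k)/|k|_1\le 1$, and the constant $1$ is an integrable dominating function. By Proposition~\ref{prop:SLLN-N}, $N(k^{(m)})/|k^{(m)}|_1\to\mu_\lambda$ almost surely. Dominated convergence then gives $M(k^{(m)})/|k^{(m)}|_1\to\mu_\lambda$.

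\emph{Case $0<f(0_d)<1$.} We pass to the embedded chain of Remark~\ref{rem:no-batch}, whose increments have pmf $f^{*}$ with $f^{*}(0_d)=0$. Its coordinate means are $\mu_u^{*}=\mathbb{E}[X^{[u]}\mid X\neq 0_d]=\mu_u/q_0\in(0,\infty)$. Its directional rate is therefore
\[
\mu^{*}_\lambda=\min_u \lambda^u/\mu_u^{*}=q_0\,\mu_\lambda .
\]
The previous case applies to the embedded chain and yields $M^{*}(k)/|k|_1\to q_0\mu_\lambda$. We then invoke the conversion formula \eqref{eq:M-conversion}, $M=q_0^{-1}M^{*}+(p_0/q_0)\,\sd$. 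Dividing by $|k|_1$, the second term is $(p_0/q_0)/|k|_1\to 0$, since $\sd\equiv1$. The first term converges to $q_0^{-1}\cdot q_0\mu_\lambda=\mu_\lambda$, which proves \eqref{eq:SLLN-M}.

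\emph{Main obstacle.} The delicate point is the second case, for two reasons. First, one must check that the embedded chain satisfies the hypotheses of the first case, namely $f^{*}(0_d)=0$ and finite positive means $\mu_u/q_0$; this is routine. Second, one must confirm that \eqref{eq:M-conversion} holds, which follows from \eqref{eq:kernel-scale} and \eqref{eq:F-decompose} together with $(\ez-f^{*})^{(-1)}*\big(p_0\sd+q_0F^{*}\big)$. Expanding the latter requires the identity $(\ez-f)^{(-1)}*\sd = q_0^{-1}\sd + \dots$. If that bookkeeping becomes awkward, there is a probabilistic alternative. Write $N(k)=\sum_{j=0}^{N^{*}(k)}G_j$, where the $G_j$ count the zero increments inserted before the $(j+1)$-th nonzero increment; the $G_j$ are i.i.d.\ geometric with mean $p_0/q_0$ and independent of the embedded chain. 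Wald's identity then gives
\[
M(k)=\big(\mathbb{E}N^{*}(k)+1\big)/q_0-1,
\]
which is the same linear relation and suffices for the limit.
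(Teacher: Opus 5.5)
Your argument is correct and is essentially the paper's proof: dominated convergence with $0\le N(k)/|k|_1\le 1$ when $f(0_d)=0$. Otherwise you reduce to the embedded chain via the conversion formula \eqref{eq:M-conversion}, with $\mu_u^{*}=\mu_u/q_0$ and $\mu_\lambda^{*}=q_0\mu_\lambda$. There is one small slip in your optional Wald alternative: if $G_j$ counts only the zero increments (mean $p_0/q_0$), the decomposition should read $N(k)=N^{*}(k)+\sum_{j=0}^{N^{*}(k)}G_j$. That corrected decomposition does give the relation $M(k)=(M^{*}(k)+1)/q_0-1$ you state, whereas your displayed sum alone would give $(M^{*}(k)+1)\,p_0/q_0$.
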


\begin{proof}
\emph{Case 1: $f(0_d)=0$.}
By Proposition~\ref{prop:SLLN-N}, we have
$N(k)/|k|_1\to \mu_\lambda$ almost surely along $k\stackrel{\lambda}{\to}\infty$.
Under $f(0_d)=0$ we have $|X_n|_1\ge 1$ almost surely, hence
$|S_n|_1\ge n$ and therefore $0\le N(k)\le |k|_1$ for all $k$.
Thus $0\le N(k)/|k|_1\le 1$, and the dominated convergence theorem yields
\begin{equation*}
\frac{M(k)}{|k|_1}
=
\mathbb{E}\!\left[\frac{N(k)}{|k|_1}\right]
\xrightarrow[k\stackrel{\lambda}{\to}\infty]{}
\mu_\lambda.
\end{equation*}

\smallskip
\emph{Case 2: $f(0_d)>0$.}
Let $p_0:=f(0_d)$ and $q_0:=1-p_0$, and define the embedded (batch-free) increment
pmf $f^{*}$ as in \eqref{eq:f-star-def}. Let $M^{*}$ be the renewal function of the
embedded chain, as in \eqref{eq:M-star-def}.
By the conversion formula \eqref{eq:M-conversion} (Remark~\ref{rem:no-batch}),
\begin{equation*}
M(k)=\frac{1}{q_0}\,M^{*}(k)+\frac{p_0}{q_0}\,\sd(k).
\end{equation*}
Since $f^{*}(0_d)=0$, Case~1 applies to the embedded chain, thus
$M^{*}(k)/|k|_1\to \mu_\lambda^{*}$, where
$\mu_\lambda^{*}=\min_{u} \lambda^u/\mu_u^{*}$ and
$\mu_u^{*}=\mathbb{E}[X_1^{*[u]}]=\mu_u/q_0$. Hence $\mu_\lambda^{*}=q_0\,\mu_\lambda$.
Dividing the conversion identity by $|k|_1$ and letting $k\stackrel{\lambda}{\to}\infty$ gives
\begin{equation*}
\frac{M(k)}{|k|_1}
=
\frac{1}{q_0}\,\frac{M^{*}(k)}{|k|_1}
+
\frac{p_0}{q_0}\,\frac{\sd(k)}{|k|_1}
\longrightarrow
\frac{1}{q_0}\,(q_0\mu_\lambda)+0
=
\mu_\lambda,
\end{equation*}
which proves \eqref{eq:SLLN-M}.
\end{proof}

\subsection{Central limit theorem for additive functionals}\label{subsec:clt-additive}

Let $s\in\mathbb{N}$ and let $h:\mathbb{N}^d\to\mathbb{R}^s$ be measurable with
$\mathbbm{E}\!\left[h(X_1)^{\top}h(X_1)\right]<\infty$.
Define the additive functional
\begin{equation}\label{eq:Wh-def}
W_h(k_{1:d})
:=
\sum_{n=1}^{N(k_{1:d})} h(X_n),
\qquad
k_{1:d}\in\mathbb{N}^d,
\end{equation}
with the convention $W_h(k_{1:d})=0_s$ when $N(k_{1:d})=0$.

\noindent Set
\begin{equation}\label{eq:Ah-Bh}
A_h:=\mathbbm{E}[h(X_1)]\in\mathbb{R}^s,
\ \
B_h:=\mathbbm{E}\!\left[h(X_1)\,h(X_1)^{\top}\right]\in\mathbb{R}^{s\times s},
\ \
\Sigma_h:=B_h-A_hA_h^{\top}.
\end{equation}
Then $\Sigma_h=\mathrm{Cov}(h(X_1))$ is symmetric and positive semidefinite. The standard i.i.d. multivariate CLT yields
\begin{equation}\label{eq:clt-iid-h}
\frac{1}{\sqrt{n}}\sum_{m=1}^n\big(h(X_m)-A_h\big)
\xrightarrow[n\to\infty]{d}
\mathcal{N}_s(0,\Sigma_h),
\end{equation}
where the Gaussian limit is understood in the possibly degenerate sense when $\Sigma_h$ is singular.

\smallskip
The next lemma is an Anscombe-type result adapted to our directional scaling.

\noindent The following Anscombe-type lemma is used for random renewal indices.
\begin{lemma}\label{lem:anscombe-multitime}
Assume the hypotheses of Proposition~\ref{prop:SLLN-N}, so that under \eqref{eq:lambda-scaling} one has
$N(k_{1:d})/|k|_1\to\mu_\lambda$ almost surely.
Let $(Y_n)_{n\ge 1}$ be i.i.d.\ real-valued random variables with
$\mathbbm{E}[Y_1]=0$ and $\mathbbm{E}[Y_1^2]=1$, and set $T_n=\sum_{m=1}^n Y_m$.
Then, under \eqref{eq:lambda-scaling},
\begin{equation}\label{eq:anscombe}
\frac{T_{N(k_{1:d})}}{\sqrt{|k|_1}}
\xrightarrow[k_{1:d}\stackrel{\lambda}{\to}\infty]{d}
\mathcal{N}(0,\mu_\lambda).
\end{equation}
\end{lemma}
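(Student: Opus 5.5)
The plan is to follow the classical Anscombe argument: replace the random index $N(k)$ by a deterministic one and control the error with Kolmogorov's maximal inequality. Fix a sequence $k^{(m)}$ with $k^{(m)}\stackrel{\lambda}{\to}\infty$. Set $a_m:=\lfloor \mu_\lambda |k^{(m)}|_1\rfloor$, so that $a_m\to\infty$ because $\mu_\lambda>0$. Then decompose
\[
\frac{T_{N(k^{(m)})}}{\sqrt{|k^{(m)}|_1}}
=\sqrt{\frac{a_m}{|k^{(m)}|_1}}\,\frac{T_{a_m}}{\sqrt{a_m}}
+\frac{T_{N(k^{(m)})}-T_{a_m}}{\sqrt{|k^{(m)}|_1}}.
\]

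For the first term, the classical CLT gives $T_{a_m}/\sqrt{a_m}\to\mathcal{N}(0,1)$ in distribution, and $a_m/|k^{(m)}|_1\to\mu_\lambda$. Hence this term converges in distribution to $\mathcal{N}(0,\mu_\lambda)$. By Slutsky's lemma it then suffices to show that the second term tends to $0$ in probability.

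The second term is the main step. Fix $\varepsilon,\eta>0$ and a small $\delta\in(0,1)$. Proposition~\ref{prop:SLLN-N} gives $N(k^{(m)})/|k^{(m)}|_1\to\mu_\lambda$ almost surely. Almost sure convergence implies convergence in probability, so $\mathbb{P}(|N(k^{(m)})-a_m|>\delta a_m)\to 0$. On the complementary event,
\[
|T_{N(k^{(m)})}-T_{a_m}|\le \max_{|n-a_m|\le \delta a_m}|T_n-T_{a_m}|.
\]
Apply Kolmogorov's maximal inequality separately to the forward block $n\in[a_m,a_m+\delta a_m]$ and the backward block $n\in[a_m-\delta a_m,a_m]$. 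Each block is a sum of at most $\delta a_m+1$ centered i.i.d. terms of unit variance. This yields
\[
\mathbb{P}\Big(\max_{|n-a_m|\le\delta a_m}|T_n-T_{a_m}|>\varepsilon\sqrt{|k^{(m)}|_1}\Big)\le \frac{2(\delta a_m+1)}{\varepsilon^2|k^{(m)}|_1}\longrightarrow \frac{2\delta\mu_\lambda}{\varepsilon^2}.
\]
Choosing $\delta$ so that $2\delta\mu_\lambda/\varepsilon^2<\eta$ gives $\limsup_m\mathbb{P}(|\text{second term}|>\varepsilon)\le\eta$. Since $\eta$ is arbitrary, the second term tends to $0$ in probability.

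Two points need care. First, $N(k)$ need not be independent of the $Y_n$; for example, $Y_n$ may be a function of $X_n$. That is exactly why the argument uses the maximal inequality over a deterministic window rather than conditioning on $N(k)$. Second, the statement is along the directional scaling, so the argument should be phrased along arbitrary sequences $k^{(m)}\stackrel{\lambda}{\to}\infty$, as in Lemma~\ref{lem:random-index}. Only convergence in probability of $N(k)/|k|_1$ is actually needed, and Proposition~\ref{prop:SLLN-N} supplies more than that. Since the sequence $k^{(m)}$ was arbitrary, \eqref{eq:anscombe} follows.
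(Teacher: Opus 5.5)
Your proof is correct and follows essentially the same route as the paper: you center at the deterministic index $\lfloor \mu_\lambda |k|_1\rfloor$, control the random-index error with Kolmogorov's maximal inequality on a forward and a backward window, and conclude with the classical CLT and Slutsky's lemma. The differences are only cosmetic: you use a free window width $\delta$ and normalize the error by $\sqrt{|k|_1}$, whereas the paper fixes the width at $\varepsilon^3$ and normalizes by $\sqrt{z_k}$.
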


\begin{proof}
Let $z_k:=\lfloor \mu_\lambda |k|_1\rfloor$.
By Proposition~\ref{prop:SLLN-N}, $N(k_{1:d})/|k|_1\to\mu_\lambda$ almost surely, and since
$z_k/|k|_1\to\mu_\lambda$ deterministically, it follows that
\begin{equation}\label{eq:N-over-z}
\frac{N(k_{1:d})}{z_k}\xrightarrow[k_{1:d}\stackrel{\lambda}{\to}\infty]{a.s.}1.
\end{equation}
Fix $\varepsilon\in(0,1/3)$ and define
\begin{equation*}
z_1:=\lfloor (1-\varepsilon^3)z_k\rfloor+1,
\qquad
z_2:=\lfloor (1+\varepsilon^3)z_k\rfloor.
\end{equation*}
By \eqref{eq:N-over-z},
\begin{equation}\label{eq:prob-band}
\mathbbm{P}\!\left(N(k_{1:d})\notin[z_1,z_2]\right)\to 0.
\end{equation}
Moreover,
\begin{equation*}
\mathbbm{P}\!\left(\big|T_{N(k_{1:d})}-T_{z_k}\big|>\varepsilon \sqrt{z_k}\right)
\le I_1(k)+I_2(k),
\end{equation*}
where
\begin{eqnarray*}
I_1(k)&:=&\mathbbm{P}\!\left(\big|T_{N(k_{1:d})}-T_{z_k}\big|>\varepsilon \sqrt{z_k},\ N(k_{1:d})\in[z_1,z_2]\right),\\
I_2(k)&:=&\mathbbm{P}\!\left(N(k_{1:d})\notin[z_1,z_2]\right).
\end{eqnarray*}
Clearly $I_2(k)\to 0$ by \eqref{eq:prob-band}. On the event $\{N(k_{1:d})\in[z_1,z_2]\}$,
\begin{equation*}
\big|T_{N(k_{1:d})}-T_{z_k}\big|
\le \max_{z_1\le n\le z_k}\big|T_n-T_{z_k}\big|
+\max_{z_k\le n\le z_2}\big|T_n-T_{z_k}\big|.
\end{equation*}
Applying Kolmogorov's maximal inequality to the partial sums of $(Y_n)$ yields
\begin{eqnarray*}
\mathbbm{P}\!\left(\max_{z_1\le n\le z_k}\big|T_n-T_{z_k}\big|>\varepsilon\sqrt{z_k}\right)
&\le& \frac{z_k-z_1}{\varepsilon^2 z_k},
\\
\mathbbm{P}\!\left(\max_{z_k\le n\le z_2}\big|T_n-T_{z_k}\big|>\varepsilon\sqrt{z_k}\right)
&\le& \frac{z_2-z_k}{\varepsilon^2 z_k}.
\end{eqnarray*}
Since $z_k-z_1\le \varepsilon^3 z_k+1$ and $z_2-z_k\le \varepsilon^3 z_k$, we obtain
$I_1(k)\le 2\varepsilon+o(1)$. As $\varepsilon>0$ is arbitrary,
\begin{equation}\label{eq:T-diff-op}
\frac{T_{N(k_{1:d})}-T_{z_k}}{\sqrt{z_k}}
\xrightarrow[k_{1:d}\stackrel{\lambda}{\to}\infty]{p}0.
\end{equation}
By the classical CLT, $T_{z_k}/\sqrt{z_k}\xrightarrow[]{d} \mathcal{N}(0,1)$, and since
$\sqrt{z_k/|k|_1}\to \sqrt{\mu_\lambda}$, Slutsky's lemma gives
\begin{equation*}
\frac{T_{z_k}}{\sqrt{|k|_1}}
=\sqrt{\frac{z_k}{|k|_1}}\cdot \frac{T_{z_k}}{\sqrt{z_k}}
\xrightarrow[k_{1:d}\stackrel{\lambda}{\to}\infty]{d}
\mathcal{N}(0,\mu_\lambda).
\end{equation*}
Finally, \eqref{eq:T-diff-op} implies
$(T_{N(k_{1:d})}-T_{z_k})/\sqrt{|k|_1}\xrightarrow[]{p}0$, hence
$T_{N(k_{1:d})}/\sqrt{|k|_1}$ has the same limit, proving \eqref{eq:anscombe}.
\end{proof}

We now state the CLT for additive functionals.

\noindent We now obtain the central limit theorem for additive functionals.
\begin{theorem}\label{thm:clt-additive}
Assume $\mathbbm{E}\!\left[h(X_1)^{\top}h(X_1)\right]<\infty$.
Under \eqref{eq:lambda-scaling},
\begin{equation}\label{eq:clt-additive}
\frac{W_h(k_{1:d})-A_h\,N(k_{1:d})}{\sqrt{|k|_1}}
\xrightarrow[k_{1:d}\stackrel{\lambda}{\to}\infty]{d}
\mathcal{N}_s\!\left(0,\mu_\lambda\,\Sigma_h\right),
\end{equation}
where $A_h$ and $\Sigma_h$ are given by \eqref{eq:Ah-Bh}.
\end{theorem}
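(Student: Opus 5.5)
The approach is to reduce the vector statement to the scalar Anscombe-type Lemma~\ref{lem:anscombe-multitime} by the Cram\'er--Wold device. First observe that
\[
W_h(k_{1:d})-A_h N(k_{1:d})=\sum_{n=1}^{N(k_{1:d})}\big(h(X_n)-A_h\big).
\]
This holds also on $\{N(k_{1:d})=0\}$ by the empty-sum convention. So the left side of \eqref{eq:clt-additive} is a randomly indexed partial sum of the centered i.i.d.\ vectors $h(X_n)-A_h$, which have mean $0_s$ and covariance $\Sigma_h$. The finiteness of $\Sigma_h$ follows from the assumption $\mathbb{E}[h(X_1)^\top h(X_1)]<\infty$.

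Fix $t\in\mathbb{R}^s$ and set $\sigma_t^2:=t^\top\Sigma_h t\ge 0$. By Cram\'er--Wold it suffices to show that
\[
\frac{1}{\sqrt{|k|_1}}\sum_{n=1}^{N(k_{1:d})} t^\top\big(h(X_n)-A_h\big)\xrightarrow{d}\mathcal{N}(0,\mu_\lambda\sigma_t^2).
\]
If $\sigma_t^2>0$, define $Y_n:=t^\top(h(X_n)-A_h)/\sigma_t$. These are i.i.d.\ with mean $0$ and variance $1$, and they are functions of the same increments $X_n$ that drive $N(k_{1:d})$. Lemma~\ref{lem:anscombe-multitime} then gives $T_{N(k_{1:d})}/\sqrt{|k|_1}\to\mathcal{N}(0,\mu_\lambda)$, and multiplying by $\sigma_t$ yields the claim. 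The lemma's proof never used independence between $(Y_n)$ and $N$. It used only the almost sure convergence $N/|k|_1\to\mu_\lambda$ from Proposition~\ref{prop:SLLN-N}, together with Kolmogorov's inequality and the CLT for the deterministic index $z_k$. So this dependence is harmless, and it is the point worth stating explicitly.

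If $\sigma_t^2=0$, then $t^\top(h(X_n)-A_h)=0$ almost surely for every $n$, so the projected sum vanishes identically. This matches the degenerate limit $\mathcal{N}(0,0)=\delta_0$. Combining both cases over all $t$, the projections converge to $\mathcal{N}(0,\mu_\lambda t^\top\Sigma_h t)$, which is the law of $t^\top Z$ for $Z\sim\mathcal{N}_s(0,\mu_\lambda\Sigma_h)$. This proves \eqref{eq:clt-additive}.

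The main (mild) obstacle is verifying that Lemma~\ref{lem:anscombe-multitime} indeed applies when the summands are dependent on the random index. I would handle this by re-reading its proof: every step (the band event for $N$, the maximal inequality for $T_n-T_{z_k}$ over deterministic ranges, and the CLT for $T_{z_k}$) concerns either $N$ alone or $(Y_n)$ alone at deterministic indices. No step uses any joint structure between them.
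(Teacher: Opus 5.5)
Your proposal is correct and follows the paper's proof essentially step for step. Like the paper, you reduce by Cram\'er--Wold to the scalar projections $t^\top(h(X_n)-A_h)$, dispose of the degenerate direction $t^\top\Sigma_h t=0$, and apply Lemma~\ref{lem:anscombe-multitime} to the standardized projections otherwise. Your explicit check that the Anscombe lemma needs no independence between $N(k_{1:d})$ and the summands is correct: its proof uses only the band event for $N$, together with Kolmogorov's inequality and the CLT at deterministic indices. The paper leaves this point implicit.
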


\begin{proof}
Let $a\in\mathbb{R}^s$ be arbitrary and consider the scalar projection
\begin{equation*}
Z_n^{(a)}:=a^{\top}\big(h(X_n)-A_h\big),
\qquad n\ge 1.
\end{equation*}
Then $(Z_n^{(a)})$ are i.i.d.\ with $\mathbbm{E}[Z_1^{(a)}]=0$ and
$\mathrm{Var}(Z_1^{(a)})=a^{\top}\Sigma_h a$.
If $a^{\top}\Sigma_h a=0$, then $Z_1^{(a)}=0$ almost surely and thus
$a^{\top}\big(W_h(k_{1:d})-A_hN(k_{1:d})\big)=0$ almost surely for all $k_{1:d}$, so the limit in
\eqref{eq:clt-additive} holds trivially along this direction.

Assume now that $a^{\top}\Sigma_h a>0$ and define the standardized i.i.d.\ sequence
\begin{equation*}
Y_n^{(a)}:=\frac{Z_n^{(a)}}{\sqrt{a^{\top}\Sigma_h a}},
\qquad
T_n^{(a)}:=\sum_{m=1}^n Y_m^{(a)}.
\end{equation*}
Then $\mathbbm{E}[Y_1^{(a)}]=0$ and $\mathbbm{E}[(Y_1^{(a)})^2]=1$. Moreover,
\begin{equation*}
\frac{a^{\top}\big(W_h(k_{1:d})-A_hN(k_{1:d})\big)}{\sqrt{|k|_1}}
=
\sqrt{a^{\top}\Sigma_h a}\cdot
\frac{T_{N(k_{1:d})}^{(a)}}{\sqrt{|k|_1}}.
\end{equation*}
By Lemma~\ref{lem:anscombe-multitime},
$T_{N(k_{1:d})}^{(a)}/\sqrt{|k|_1}\xrightarrow[]{d}\mathcal{N}(0,\mu_\lambda)$, hence
\begin{equation*}
\frac{a^{\top}\big(W_h(k_{1:d})-A_hN(k_{1:d})\big)}{\sqrt{|k|_1}}
\xrightarrow[k_{1:d}\stackrel{\lambda}{\to}\infty]{d}
\mathcal{N}\!\left(0,\mu_\lambda\,a^{\top}\Sigma_h a\right).
\end{equation*}
Since this holds for every $a\in\mathbb{R}^s$, the Cram\'er--Wold device yields the
vector convergence \eqref{eq:clt-additive}.
\end{proof}

\paragraph{Asymptotic normality of $N$ under a unique minimizer.}
A CLT for $N(k_{1:d})$ itself requires additional structure, because
$N(k_{1:d})=\min_{1\le u\le d}N^{[u]}(k_u)$ is a minimum of strongly dependent marginal counting
processes. A clean Gaussian limit holds when the minimum in \eqref{eq:mu-lambda} is attained at a
unique coordinate.

\noindent A Gaussian limit for the counting process itself holds under a unique minimizing coordinate.
\begin{corollary}\label{cor:clt-N-unique}
Assume $\mathbbm{E}\!\left[(X_1^{[u]})^2\right]<\infty$ for all $u=1,\ldots,d$ and let
$k_{1:d}\stackrel{\lambda}{\to}\infty$ in the sense of \eqref{eq:lambda-scaling}.
Set $\mu_u:=\mathbbm{E}[X_1^{[u]}]$ and
\begin{equation*}
\mu_\lambda:=\min_{1\le u\le d}\left\{\frac{\lambda^u}{\mu_u}\right\}.
\end{equation*}
\noindent
\emph{Interpretation.}
Since $N(k)=\min_{u}N^{[u]}(k_u)$ and $N^{[u]}(k_u)\approx k_u/\mu_u$ for large $k_u$,
the smallest linear trend $\min_u k_u/\mu_u$ governs the asymptotic growth of $N(k)$.
If the minimizer is unique, the corresponding coordinate becomes the unique active constraint
and eventually $N(k)=N^{[u_\lambda]}(k_{u_\lambda})$ along the direction $\lambda$.

Now, assume that this minimum is attained at a unique index $u_\lambda\in\{1,\ldots,d\}$, i.e.
\begin{equation}\label{eq:unique-min}
\frac{\lambda^{u_\lambda}}{\mu_{u_\lambda}}
<
\frac{\lambda^{v}}{\mu_{v}},
\qquad v\neq u_\lambda.
\end{equation}
Let $\sigma_{u_\lambda}^2:=\mathrm{Var}(X_1^{[u_\lambda]})$. Then
\begin{equation}\label{eq:clt-N-unique-centered-ku}
\frac{N(k_{1:d})-\frac{k_{u_\lambda}}{\mu_{u_\lambda}}}{\sqrt{|k|_1}}
\xrightarrow[k_{1:d}\stackrel{\lambda}{\to}\infty]{d}
\mathcal{N}\!\left(0,\ \lambda^{u_\lambda}\,\frac{\sigma_{u_\lambda}^2}{\mu_{u_\lambda}^3}\right).
\end{equation}
If in addition the deterministic scaling satisfies
$k_{u_\lambda}=\lambda^{u_\lambda}|k|_1+o(\sqrt{|k|_1})$, then \eqref{eq:clt-N-unique-centered-ku}
is equivalent to
\begin{equation}\label{eq:clt-N-unique}
\sqrt{|k|_1}\left(\frac{N(k_{1:d})}{|k|_1}-\mu_\lambda\right)
\xrightarrow[k_{1:d}\stackrel{\lambda}{\to}\infty]{d}
\mathcal{N}\!\left(0,\ \lambda^{u_\lambda}\,\frac{\sigma_{u_\lambda}^2}{\mu_{u_\lambda}^3}\right).
\end{equation}
\end{corollary}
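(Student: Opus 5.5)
The plan is to reduce the statement to the classical univariate renewal CLT for the active coordinate $u_\lambda$, and then show that the minimum in \eqref{eq:N-min-marginals} is eventually attained at $u_\lambda$ with probability tending to one. Write $u:=u_\lambda$ for brevity. The univariate renewal CLT (valid under $\mu_u\in(0,\infty)$ and $\sigma_u^2<\infty$, even with an atom at zero) gives
\[
\frac{N^{[u]}(k_u)-k_u/\mu_u}{\sqrt{k_u}}\xrightarrow{d}\mathcal{N}\!\left(0,\frac{\sigma_u^2}{\mu_u^3}\right).
\]
If one prefers to derive it within the paper, one applies Theorem~\ref{thm:clt-additive} in dimension one with $h(x)=x^{[u]}$. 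This gives $(S^{[u]}_{N^{[u]}}-\mu_uN^{[u]})/\sqrt{k_u}\to\mathcal{N}(0,\sigma_u^2/\mu_u)$. It is then combined with the sandwich $S^{[u]}_{N^{[u]}(k_u)}\le k_u<S^{[u]}_{N^{[u]}(k_u)+1}$, which shows $k_u-S^{[u]}_{N^{[u]}}=o_p(\sqrt{k_u})$, since a single increment $X^{[u]}_{N+1}$ is $o_p(\sqrt{k_u})$ under a finite second moment.

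Since $k_u/|k|_1\to\lambda^u$, Slutsky's lemma converts the normalisation $\sqrt{k_u}$ into $\sqrt{|k|_1}$. The limit variance becomes $\lambda^u\sigma_u^2/\mu_u^3$, which is the claimed variance.

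The key step is to show $\mathbb{P}(N(k)\neq N^{[u]}(k_u))\to0$. By Proposition~\ref{prop:SLLN-N} applied coordinatewise (i.e.\ \eqref{eq:univariate-SLLN}), almost surely
\[
\frac{N^{[v]}(k_v)}{|k|_1}\to\frac{\lambda^v}{\mu_v}\quad\text{for every }v.
\]
By the strict inequality \eqref{eq:unique-min}, choose $\eta>0$ such that $\lambda^v/\mu_v>\lambda^u/\mu_u+2\eta$ for all $v\neq u$. Then almost surely, for all $k$ far enough along the direction, $N^{[v]}(k_v)>N^{[u]}(k_u)$ for every $v\neq u$. Hence $N(k)=N^{[u]}(k_u)$ eventually, and in particular with probability tending to one. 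The difference $(N(k)-N^{[u]}(k_u))/\sqrt{|k|_1}$ therefore tends to $0$ in probability, and \eqref{eq:clt-N-unique-centered-ku} follows by Slutsky's lemma.

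I expect the main subtlety to be making the "eventually" rigorous along a multi-index limit. Following the paper's convention, I will argue along an arbitrary deterministic sequence $k^{(m)}$ satisfying \eqref{eq:lambda-scaling}. This causes no real obstacle, because a.s.\ convergence of each of the finitely many marginal ratios along that sequence suffices.

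For the final equivalence, write
\[
\sqrt{|k|_1}\left(\frac{N(k)}{|k|_1}-\mu_\lambda\right)=\frac{N(k)-k_u/\mu_u}{\sqrt{|k|_1}}+\frac{k_u-\lambda^u|k|_1}{\mu_u\sqrt{|k|_1}},
\]
using $\mu_\lambda=\lambda^u/\mu_u$. Under the hypothesis $k_u=\lambda^u|k|_1+o(\sqrt{|k|_1})$ the second term is deterministic and vanishes, so the two statements are equivalent by Slutsky's lemma.
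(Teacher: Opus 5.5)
Your proposal is correct and follows the paper's proof essentially step for step. The common steps are the eventual almost-sure identification $N(k)=N^{[u_\lambda]}(k_{u_\lambda})$ from the marginal strong laws and the strict inequality \eqref{eq:unique-min}, the univariate renewal CLT rescaled from $\sqrt{k_{u_\lambda}}$ to $\sqrt{|k|_1}$ by Slutsky, and the deterministic recentering for \eqref{eq:clt-N-unique}.

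One point in your optional in-paper derivation needs a fix: $X^{[u]}_{N^{[u]}(k_u)+1}$ is not distributed like $X_1^{[u]}$, so "a single increment is $o_p(\sqrt{k_u})$" is not by itself a justification. Argue instead that $X^{[u]}_n/\sqrt{n}\to0$ a.s.\ (Borel--Cantelli under the second moment), and then apply Lemma~\ref{lem:random-index}.
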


\begin{proof}
By \eqref{eq:N-min-marginals} and \eqref{eq:univariate-SLLN}, for each $u$,
$N^{[u]}(k_u)/|k|_1\to \lambda^u/\mu_u$ almost surely under $k_{1:d}\stackrel{\lambda}{\to}\infty$.
On the probability-one event where all these limits hold simultaneously, the strict inequality
\eqref{eq:unique-min} implies that for all sufficiently large $k$ along the chosen direction,
\begin{equation*}
N(k_{1:d})=\min_{1\le u\le d} N^{[u]}(k_u)=N^{[u_\lambda]}(k_{u_\lambda}).
\end{equation*}
Therefore the asymptotics of $N(k_{1:d})$ reduce to the univariate renewal CLT for
$N^{[u_\lambda]}(k_{u_\lambda})$:
\begin{equation*}
\sqrt{k_{u_\lambda}}\left(\frac{N^{[u_\lambda]}(k_{u_\lambda})}{k_{u_\lambda}}-\frac{1}{\mu_{u_\lambda}}\right)
\xrightarrow[k_{u_\lambda}\to\infty]{d}
\mathcal{N}\!\left(0,\frac{\sigma_{u_\lambda}^2}{\mu_{u_\lambda}^3}\right).
\end{equation*}
Equivalently,
$N^{[u_\lambda]}(k_{u_\lambda})-\frac{k_{u_\lambda}}{\mu_{u_\lambda}}
=
\sqrt{k_{u_\lambda}}\cdot
\mathcal{N}\!\left(0,\frac{\sigma_{u_\lambda}^2}{\mu_{u_\lambda}^3}\right)
+o_p(\sqrt{k_{u_\lambda}})$.
Multiplying by $\sqrt{k_{u_\lambda}/|k|_1}\to\sqrt{\lambda^{u_\lambda}}$ and using Slutsky's lemma yields
\eqref{eq:clt-N-unique-centered-ku}.
Finally, if $k_{u_\lambda}=\lambda^{u_\lambda}|k|_1+o(\sqrt{|k|_1})$, then
$\left(k_{u_\lambda}-\lambda^{u_\lambda}|k|_1\right)/\sqrt{|k|_1}\to 0$, and since
$\mu_\lambda=\lambda^{u_\lambda}/\mu_{u_\lambda}$, the centering by $k_{u_\lambda}/\mu_{u_\lambda}$
can be replaced by $\mu_\lambda|k|_1$, giving \eqref{eq:clt-N-unique}.
\end{proof}

\begin{remark}\label{rem:nonunique}
If the minimum in \eqref{eq:mu-lambda} is not unique (e.g.\ in the ``balanced'' direction
$\lambda^u=\mu_u/(\mu_1+\cdots+\mu_d)$ for all $u$), then $N(k_{1:d})$ is asymptotically the minimum
of several competing marginal counting processes with comparable linear trends.
In that case, a Gaussian limit for $N(k_{1:d})$ is not automatic.
\smallskip
\noindent
\emph{Related bivariate asymptotics.}
In the two-dimensional renewal literature, joint weak limits for vectors of marginal
counting processes have been studied (see, for instance, Hunter's asymptotic results
in two-dimensional renewal theory \cite{hunter1974renewal1}).
Combined with the identity $N(k)=\min_u N^{[u]}(k_u)$, such joint Gaussian limits
naturally lead to non-Gaussian limits in the non-unique minimizer regime,
since the limit is then governed by the minimum of correlated Gaussian variables.
\end{remark}

\section{Maximum likelihood estimation}\label{sec:mle}

Let $m\in\mathbb{N}^d$ be a fixed observation horizon and consider a single trajectory of the
multi-time renewal chain $S_n=\sum_{i=1}^n X_i$ and its counting process
\begin{equation}\label{eq:N-def-mle}
N(m):=\sup\{n\in\mathbb{N}:S_n\le_d m\}.
\end{equation}
Under the standing assumptions $\mu_u=\mathbbm{E}[X_1^{[u]}]\in(0,\infty)$ for all $u=1,\ldots,d$,
we have $S_n^{[u]}\to\infty$ almost surely and thus $N(m)<\infty$ almost surely for every fixed $m$.

\paragraph{Age vector and censoring.}
On $\{N(m)<\infty\}$, the last renewal epoch inside the box is $S_{N(m)}$ and the age
 vector at $m$ is
\begin{equation}\label{eq:U-def-mle}
U_m:=m-S_{N(m)}\in\mathbb{N}^d.
\end{equation}
We observe the completed increments $X_1,\ldots,X_{N(m)}$ together with $U_m$, i.e.
\begin{equation}\label{eq:history-mle}
\mathcal{H}(m):=\big(X_1,\ldots,X_{N(m)},U_m\big).
\end{equation}
The next increment $X_{N(m)+1}$ is not observed. From the definition of $N(m)$ we only know that
\begin{equation}\label{eq:censoring-event}
S_{N(m)+1}\not\le_d m
\qquad\Longleftrightarrow\qquad
X_{N(m)+1}\not\le_d U_m.
\end{equation}
In particular, for $d\ge 2$ one \emph{does not} have $U_m\le_d X_{N(m)+1}$ in general; the information
in \eqref{eq:censoring-event} is only that the remaining budget is violated in \emph{at least one}
coordinate.

\noindent The multivariate censoring event has the following precise meaning.
\begin{remark}\label{rem:tail-meaning}
For $u\in\mathbb{N}^d$,
\begin{equation}\label{eq:notle-meaning}
k\not\le_d u
\qquad\Longleftrightarrow\qquad
\exists\,j\in\{1,\ldots,d\}\ \text{such that}\ k_j>u_j.
\end{equation}
Thus the censoring event $X\not\le_d u$ is a \emph{union} of coordinate exceedances, not an intersection.
For example, in $d=2$ and $u=(2,3)$,
\begin{equation*}
\{(x_1,x_2)\not\le_2(2,3)\}
=
\{x_1\ge 3\}\ \cup\ \{x_2\ge 4\},
\end{equation*}
whereas the stricter set $\{x_1\ge 3,\ x_2\ge 4\}$ corresponds to the orthant event
$\{X^{[1]}>2,\ X^{[2]}>3\}$ and is \emph{not} the event implied by \eqref{eq:censoring-event}.
\end{remark}

\paragraph{Likelihood.}
Let $\{f_\theta:\theta\in\Theta\}$ be a family of pmfs on $\mathbb{N}^d$ for the increment $X_1$.
Define the cdf with respect to the componentwise order by
\begin{equation}\label{eq:F-def-mle}
F_\theta(u):=\mathbb{P}_\theta(X_1\le_d u)=\sum_{k\le_d u} f_\theta(k),
\qquad u\in\mathbb{N}^d.
\end{equation}
The corresponding complement (upper-set probability) is
\begin{equation}\label{eq:Fbar-def-mle}
\overline{F}_\theta(u)
:=\mathbb{P}_\theta(X_1\not\le_d u)
=1-F_\theta(u)
=\sum_{k\not\le_d u} f_\theta(k),
\qquad u\in\mathbb{N}^d.
\end{equation}
Using independence of increments and \eqref{eq:censoring-event}, the likelihood based on
$\mathcal{H}(m)$ is
\begin{equation}\label{eq:likelihood-mle}
L_m(\theta)
=
\left(\prod_{n=1}^{N(m)} f_\theta(X_n)\right)\,
\overline{F}_\theta(U_m).
\end{equation}
The last factor is exactly the probability that an independent increment does \emph{not} fit inside the
remaining budget $U_m$ (i.e.\ it violates at least one coordinate constraint).

\paragraph{Counts and a complete-data approximation.}
For $x\in\mathbb{N}^d$, define the empirical counts
\begin{equation}\label{eq:counts-mle}
N_x(m):=\sum_{n=1}^{N(m)}\mathbbm{1}_{\{X_n=x\}},
\qquad
\sum_{x\in\mathbb{N}^d} N_x(m)=N(m).
\end{equation}
Writing $f$ for a generic pmf on $\mathbb{N}^d$, \eqref{eq:likelihood-mle} becomes
\begin{equation}\label{eq:likelihood-counts}
L_m(f)
=
\left(\prod_{x\in\mathbb{N}^d} f(x)^{N_x(m)}\right)\,
\overline{F}(U_m),
\qquad
\overline{F}(u)=\sum_{k\not\le_d u} f(k).
\end{equation}
Since the censoring contributes only one extra log-term, the maximizer of \eqref{eq:likelihood-counts}
is close (and asymptotically equivalent) to the maximizer of the complete-data criterion
$\sum_x N_x(m)\log f(x)$, namely
\begin{equation}\label{eq:M-est}
\widetilde{f}(x;m):=\frac{N_x(m)}{N(m)},
\qquad x\in\mathbb{N}^d,
\end{equation}
(with the convention $\widetilde{f}(\cdot;m)\equiv 0$ when $N(m)=0$).

\noindent The empirical complete-data estimator has the following finite-dimensional limit.
\begin{lemma}\label{lem:clt-ftilde}
Assume the conditions of Theorem~\ref{thm:clt-additive} and that $m\stackrel{\lambda}{\to}\infty$
as in \eqref{eq:lambda-scaling}.
Fix $r\in\mathbb{N}$ and distinct points $x^1,\ldots,x^r\in\mathbb{N}^d$.
Set 
\begin{equation*}
\widetilde{f}_r(m):=(\widetilde{f}(x^1;m),\ldots,\widetilde{f}(x^r;m))^\top \ \text{and} \ \
f_r:=(f(x^1),\ldots,f(x^r))^\top.
\end{equation*}
Then
\begin{equation}\label{eq:clt-ftilde}
\sqrt{|m|_1}\,\big(\widetilde{f}_r(m)-f_r\big)
\xrightarrow[m\stackrel{\lambda}{\to}\infty]{d}
\mathcal{N}_r(0,\Sigma_{f,r}),
\end{equation}
where, for $1\le i,j\le r$,
\begin{equation}\label{eq:Sigma-f}
\Sigma_{f,r}(i,j)
=
\frac{f(x^i)\big(\mathbbm{1}_{\{x^i=x^j\}}-f(x^j)\big)}{\mu_\lambda}.
\end{equation}
\end{lemma}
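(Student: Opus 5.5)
We apply Theorem~\ref{thm:clt-additive} with the vector of indicators
\[
h(x):=\big(\mathbbm{1}_{\{x=x^1\}},\ldots,\mathbbm{1}_{\{x=x^r\}}\big)^\top\in\mathbb{R}^r,
\]
which is bounded, so $\mathbbm{E}[h(X_1)^\top h(X_1)]<\infty$ holds trivially. With this choice $W_h(m)=(N_{x^1}(m),\ldots,N_{x^r}(m))^\top$ and $A_h=f_r$. Because the $x^i$ are distinct, $h(X_1)h(X_1)^\top$ is diagonal, and
\[
B_h=\mathrm{diag}(f_r),\qquad \Sigma_h(i,j)=f(x^i)\big(\mathbbm{1}_{\{x^i=x^j\}}-f(x^j)\big),
\]
which is the familiar multinomial covariance matrix. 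Theorem~\ref{thm:clt-additive} then yields
\[
\frac{W_h(m)-f_r\,N(m)}{\sqrt{|m|_1}}\xrightarrow{d}\mathcal{N}_r(0,\mu_\lambda\Sigma_h).
\]

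Next we convert this into a statement about $\widetilde f_r(m)$. On the event $\{N(m)\ge 1\}$ we have the exact identity
\[
\sqrt{|m|_1}\,\big(\widetilde f_r(m)-f_r\big)=\frac{|m|_1}{N(m)}\cdot\frac{W_h(m)-f_r N(m)}{\sqrt{|m|_1}}.
\]
By Proposition~\ref{prop:SLLN-N}, $N(m)/|m|_1\to\mu_\lambda>0$ almost surely, so $|m|_1/N(m)\to 1/\mu_\lambda$ almost surely. It follows that $\mathbb{P}(N(m)=0)\to 0$, so the convention $\widetilde f\equiv 0$ on $\{N(m)=0\}$ only alters the vector on an event of vanishing probability. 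Slutsky's lemma then gives convergence to $\mathcal{N}_r(0,\mu_\lambda^{-2}\mu_\lambda\Sigma_h)=\mathcal{N}_r(0,\Sigma_h/\mu_\lambda)$, and $\Sigma_h/\mu_\lambda$ is exactly \eqref{eq:Sigma-f}.

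The one point that needs care is the event $\{N(m)=0\}$. Let $R_m$ denote the difference between the left-hand side of \eqref{eq:clt-ftilde} and the Slutsky expression $\frac{|m|_1}{N(m)\vee 1}\cdot\frac{W_h(m)-f_rN(m)}{\sqrt{|m|_1}}$. Then $R_m$ vanishes outside $\{N(m)=0\}$, so $\mathbb{P}(R_m\ne 0)\le\mathbb{P}(N(m)=0)\to 0$, and hence $R_m\to 0$ in probability. Degenerate directions, where $\Sigma_h$ is singular (for example if some $f(x^i)=0$), are already covered by the Cramér--Wold treatment in Theorem~\ref{thm:clt-additive}.
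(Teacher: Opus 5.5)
Your proposal is correct and follows the paper's proof. Both apply Theorem~\ref{thm:clt-additive} to indicator functions of the points $x^1,\ldots,x^r$ and then rescale by $|m|_1/N(m)\to 1/\mu_\lambda$ using Slutsky's lemma; the paper uses the centered version $h_i(x)=\mathbbm{1}_{\{x=x^i\}}-f(x^i)$, which gives the same vector $W_h-A_hN(m)$ and the same $\Sigma_h$, and your explicit treatment of the event $\{N(m)=0\}$ is a small rigor point that the paper leaves implicit.
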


\begin{proof}
For $i=1,\ldots,r$ define $h_i(x):=\mathbbm{1}_{\{x=x^i\}}-f(x^i)$ and set
$h:=(h_1,\ldots,h_r)^\top$.
Then $\mathbbm{E}[h(X_1)]=0$ and $\mathbbm{E}[\|h(X_1)\|^2]<\infty$.
Moreover,
\begin{equation*}
\sum_{n=1}^{N(m)}h(X_n)
=
\big(N_{x^1}(m)-f(x^1)N(m),\ldots,N_{x^r}(m)-f(x^r)N(m)\big)^\top.
\end{equation*}
By Theorem~\ref{thm:clt-additive},
\begin{equation*}
\frac{1}{\sqrt{|m|_1}}\sum_{n=1}^{N(m)}h(X_n)
\xrightarrow[m\stackrel{\lambda}{\to}\infty]{d}
\mathcal{N}_r\!\big(0,\mu_\lambda\,\Sigma_h\big),
\end{equation*}
where $\Sigma_h=\mathrm{Cov}(h(X_1))$.
Since
\begin{equation*}
\widetilde{f}(x^i;m)-f(x^i)=\big(N_{x^i}(m)-f(x^i)N(m)\big)/N(m), 
\end{equation*}
we have
\begin{equation*}
\sqrt{|m|_1}\,\big(\widetilde{f}_r(m)-f_r\big)
=
\frac{|m|_1}{N(m)}\cdot
\frac{1}{\sqrt{|m|_1}}\sum_{n=1}^{N(m)}h(X_n).
\end{equation*}
Using $N(m)/|m|_1\to\mu_\lambda$ almost surely (Proposition~\ref{prop:SLLN-N}), we get
$|m|_1/N(m)\to 1/\mu_\lambda$, and the limit covariance is $\Sigma_h/\mu_\lambda$.
A direct computation yields \eqref{eq:Sigma-f}.
\end{proof}

\paragraph{Exact nonparametric MLE under fixed-horizon censoring.}
Let $u:=U_m$ and define the censoring set (an upper set with respect to $\le_d$)
\begin{equation}\label{eq:A-u-def}
A(u):=\{k\in\mathbb{N}^d:k\not\le_d u\},
\qquad
N_{A(u)}(m):=\sum_{k\in A(u)}N_k(m).
\end{equation}
Maximizing \eqref{eq:likelihood-counts} is equivalent to maximizing the concave criterion
\begin{equation}\label{eq:loglik-np}
\ell_m(f)
=
\sum_{k\in\mathbb{N}^d}N_k(m)\log f(k)
+
\log\!\left(\sum_{k\in A(u)}f(k)\right),
\end{equation}
over pmfs $f$ on $\mathbb{N}^d$.

\noindent The exact nonparametric maximum likelihood estimator is given below.
\begin{proposition}\label{prop:exact-mle}
Let $n:=N(m)$ and $u:=U_m$.
If $N_{A(u)}(m)>0$, the criterion \eqref{eq:loglik-np} is uniquely maximized by the pmf
$\widehat{f}(\cdot\,;m)$ given by
\begin{equation}\label{eq:fhat-exact}
\widehat{f}(k;m)
=
\begin{cases}
\dfrac{N_k(m)}{n+1},
& k\le_d u,\\[1.0em]
\dfrac{N_k(m)}{n+1}\left(1+\dfrac{1}{N_{A(u)}(m)}\right),
& k\not\le_d u.
\end{cases}
\end{equation}
Equivalently, for $k\not\le_d u$,
\begin{equation*}
\widehat{f}(k;m)
=
\frac{N_k(m)}{n+1}
+\frac{1}{n+1}\cdot\frac{N_k(m)}{N_{A(u)}(m)}.
\end{equation*}
If $N_{A(u)}(m)=0$, any pmf satisfying $\widehat{f}(k;m)=N_k(m)/(n+1)$ for $k\le_d u$ and
$\sum_{k\not\le_d u}\widehat{f}(k;m)=1/(n+1)$ is a maximizer.
\end{proposition}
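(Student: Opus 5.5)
We reduce the maximization of \eqref{eq:loglik-np} to a two-block problem by splitting the lattice into the lower set $D(u):=\{k:k\le_d u\}$ and its complement $A(u)$. Write $p:=\sum_{k\in A(u)}f(k)\in(0,1]$, so $\sum_{k\in D(u)}f(k)=1-p$. Within each block we set $f(k)=(1-p)\,g(k)$ for $k\in D(u)$ and $f(k)=p\,h(k)$ for $k\in A(u)$, where $g,h$ are pmfs on $D(u)$ and $A(u)$. This change of variables is a bijection onto pmfs with block masses $(1-p,p)$, and the criterion splits as
\begin{equation*}
\ell_m(f)=\sum_{k\in D(u)}N_k\log g(k)+\sum_{k\in A(u)}N_k\log h(k)+(n-N_{A(u)})\log(1-p)+(N_{A(u)}+1)\log p ,
\end{equation*}
using $\sum_k N_k=n$ and writing $N_k$ for $N_k(m)$. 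The three pieces depend on separate variables, so we maximize each one on its own.

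\textbf{Step 1 (within-block terms).} By Gibbs' inequality, i.e.\ nonnegativity of Kullback--Leibler divergence, $\sum_{k\in B}N_k\log g(k)$ over pmfs $g$ on a block $B$ is uniquely maximized by $g(k)=N_k/N_B$ whenever $N_B>0$. Here we need $N_{D(u)}>0$ and $N_{A(u)}>0$. Only finitely many $k$ have $N_k>0$, so the infinite lattice causes no difficulty: any mass that $g$ places on points with $N_k=0$ strictly lowers the criterion. If $N_{D(u)}=0$, the first term vanishes identically, but then $1-p$ gets exponent $0$, which forces $p=1$ and makes $g$ irrelevant; the formula is then consistent.

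\textbf{Step 2 (block mass).} The function $(n-N_A)\log(1-p)+(N_A+1)\log p$ is strictly concave on $(0,1]$ and is maximized uniquely at $p=(N_A+1)/(n+1)$. Combining the two steps gives $f(k)=\frac{n-N_A}{n+1}\cdot\frac{N_k}{n-N_A}=\frac{N_k}{n+1}$ on $D(u)$, and $f(k)=\frac{N_A+1}{n+1}\cdot\frac{N_k}{N_A}$ on $A(u)$. The latter equals \eqref{eq:fhat-exact}, and the equivalent additive form follows immediately. Uniqueness holds because each factor has a unique maximizer and the reparametrization is bijective.

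\textbf{Step 3 (the case $N_{A(u)}=0$).} The second term vanishes for every $h$, so $h$ is arbitrary. The mass optimization still gives $p=1/(n+1)$, and the $D(u)$ part gives $N_k/(n+1)$. This yields exactly the stated family of maximizers.

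The main obstacle is rigor in Step 1 on a countably infinite support: we must check that the maximizer exists and is unique among all pmfs, including those putting positive mass on unobserved points. We would handle this directly via the inequality $\sum_k N_k\log(g(k)/\hat g(k))\le N_B\log\sum_k g(k)\le 0$, which follows from Jensen's inequality, with equality iff $g=\hat g$. We would also treat the degenerate case $n=N_A$ in Step 2 with a short remark.
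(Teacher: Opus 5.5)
Your proof is correct, but it takes a different route from the paper's.

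\textbf{The paper's route.} It forms the Lagrangian $\sum_k N_k(m)\log f(k)+\log T(f)+\eta\bigl(1-\sum_k f(k)\bigr)$ and solves the stationarity equations blockwise: $f(k)=N_k(m)/\eta$ for $k\le_d u$, and $f(k)=N_k(m)/(\eta-1/T(f))$ on $A(u)$. Summing these gives $\eta=n+1$ and $T(f)=(N_{A(u)}(m)+1)/(n+1)$, and uniqueness is attributed to strict concavity.

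\textbf{Your route.} You reparametrize by the block mass $p=T(f)$ and the two conditional pmfs. This splits $\ell_m$ into two multinomial log-likelihoods plus the scalar term $(n-N_{A(u)})\log(1-p)+(N_{A(u)}+1)\log p$, and you maximize each piece globally and independently.

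\textbf{Comparison.} The Lagrangian computation is shorter, but it is only a stationarity calculation. At unobserved points the equation $N_k(m)/f(k)=\eta$ is meaningless: the optimum has $f(k)=0$ there, a boundary point where KKT inequalities are needed. The simplex is also infinite-dimensional. Moreover, $\ell_m$ is not strictly concave on it, since it is constant along directions that move mass between two unobserved points of $A(u)$.

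Your decomposition avoids these issues and gives three things the paper's argument does not.
\begin{itemize}
\item Gibbs' inequality gives existence and uniqueness over all pmfs, zero-count points included.
\item In the case $N_{A(u)}(m)=0$, it shows that $T(f)=1/(n+1)$ comes from the maximization in $p$, not from the normalization constraint alone as the paper puts it. It also shows that the stated family is exactly the set of maximizers.
\item It makes the structure of the estimator visible: the censored increment acts as one extra pseudo-observation placed in $A(u)$ and shared out in proportion to the observed counts there.
\end{itemize}
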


\begin{proof}
Write $T(f):=\sum_{k\in A(u)}f(k)$ and introduce the Lagrangian
\begin{equation*}
\mathcal{L}(f,\eta)
=
\sum_{k\in\mathbb{N}^d}N_k(m)\log f(k)
+
\log T(f)
+
\eta\left(1-\sum_{k\in\mathbb{N}^d}f(k)\right),
\end{equation*}
where $\eta$ is a Lagrange multiplier.
For $k\le_d u$ the first-order condition yields $N_k(m)/f(k)-\eta=0$, hence $f(k)=N_k(m)/\eta$.
For $k\not\le_d u$ it yields $N_k(m)/f(k)+1/T(f)-\eta=0$.

Assume first that $N_{A(u)}(m)>0$ and set $n_A:=N_{A(u)}(m)$.
Solving the first-order conditions shows that
$f(k)=N_k(m)/\eta$ for $k\le_d u$ and
$f(k)=N_k(m)/(\eta-1/T(f))$ for $k\not\le_d u$.
Summing over $k\not\le_d u$ yields
$T(f)=n_A/(\eta-1/T(f))$, i.e.\ $\eta\,T(f)-1=n_A$ and therefore $T(f)=(n_A+1)/\eta$.
Using $\sum_k f(k)=1$ gives
$1=(n-n_A)/\eta+T(f)=(n+1)/\eta$, so $\eta=n+1$ and $T(f)=(n_A+1)/(n+1)$.
Substituting back yields \eqref{eq:fhat-exact}.
Uniqueness follows from strict concavity on the simplex when $n_A>0$.

If $n_A=0$, the criterion depends on $(f(k))_{k\not\le_d u}$ only through their sum $T(f)$.
The constraint $\sum_k f(k)=1$ forces $T(f)=1/(n+1)$, yielding the stated non-uniqueness.
\end{proof}

\noindent The exact estimator and the complete-data estimator differ only by a boundary correction.
\begin{remark}\label{rem:fhat-ftilde}
Let $n=N(m)$ and $u=U_m$.
When $N_{A(u)}(m)>0$,
\begin{eqnarray*}
\widehat{f}(k;m)
&=&
\widetilde{f}(k;m)\,\frac{n}{n+1}
\quad\text{for }k\le_d u, \\
\widehat{f}(k;m)
&=&
\widetilde{f}(k;m)\,\frac{n}{n+1}\left(1+\frac{1}{N_{A(u)}(m)}\right)
\quad\text{for }k\not\le_d u.
\end{eqnarray*}
In particular, for each fixed $k$, $|\widehat{f}(k;m)-\widetilde{f}(k;m)|=\mathcal{O}(1/n)$.
\end{remark}

\noindent The exact estimator has the same first-order limit as the complete-data estimator.
\begin{theorem}\label{thm:clt-fhat}
Under the assumptions of Lemma~\ref{lem:clt-ftilde}, the exact MLE $\widehat{f}$ satisfies the same
finite-dimensional CLT: for any distinct $x^1,\ldots,x^r\in\mathbb{N}^d$,
\begin{equation*}
\sqrt{|m|_1}\,\big(\widehat{f}_r(m)-f_r\big)
\xrightarrow[m\stackrel{\lambda}{\to}\infty]{d}
\mathcal{N}_r(0,\Sigma_{f,r}),
\end{equation*}
where $\widehat{f}_r(m):=(\widehat{f}(x^1;m),\ldots,\widehat{f}(x^r;m))^\top$ and
$\Sigma_{f,r}$ is given by \eqref{eq:Sigma-f}.
\end{theorem}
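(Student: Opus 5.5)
The plan is to reduce the statement to Lemma~\ref{lem:clt-ftilde} through Slutsky's lemma. It suffices to show that
\[
\sqrt{|m|_1}\,\big(\widehat{f}_r(m)-\widetilde{f}_r(m)\big)\xrightarrow[m\stackrel{\lambda}{\to}\infty]{p}0_r .
\]
Once this holds, adding it to \eqref{eq:clt-ftilde} gives the claimed limit $\mathcal{N}_r(0,\Sigma_{f,r})$ with the same covariance \eqref{eq:Sigma-f}.

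To control the difference, I would use the explicit form in Proposition~\ref{prop:exact-mle} and Remark~\ref{rem:fhat-ftilde}. Write $n=N(m)$, $u=U_m$ and $n_A=N_{A(u)}(m)$. On the event $\{n\ge 1,\ n_A>0\}$, each coordinate satisfies
\[
|\widehat{f}(x^i;m)-\widetilde{f}(x^i;m)|
\le \widetilde{f}(x^i;m)\Big(\frac{1}{n+1}+\frac{n}{n+1}\cdot\frac{1}{n_A}\Big)
\le \frac{1}{n+1}+\frac{N_{x^i}(m)}{(n+1)\,n_A}\,\mathbbm{1}_{\{x^i\not\le_d u\}}.
\]
The last term is at most $1/(n+1)$, because $x^i\in A(u)$ implies $N_{x^i}(m)\le n_A$. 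Hence the difference is bounded by $2/(n+1)$ coordinatewise. Multiplying by $\sqrt{|m|_1}$ gives $2\sqrt{|m|_1}/(N(m)+1)$. By Proposition~\ref{prop:SLLN-N}, $N(m)/|m|_1\to\mu_\lambda>0$ almost surely, so this quantity behaves like $2/(\mu_\lambda\sqrt{|m|_1})\to0$ almost surely.

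The main obstacle is the exceptional event $\{n_A=0\}$ (and $\{n=0\}$), where $\widehat f$ is not uniquely defined. I would handle it by fixing any selection of the maximizer. Proposition~\ref{prop:exact-mle} still pins $\widehat f(k;m)=N_k(m)/(n+1)$ for $k\le_d u$. For $k\not\le_d u$, the values are nonnegative with total mass $1/(n+1)$, and $N_k(m)=0$ there since $n_A=0$, so $\widetilde f(k;m)=0$. The bound $|\widehat f-\widetilde f|\le 2/(n+1)$ therefore persists. The event $\{N(m)=0\}$ has probability tending to zero by Lemma~\ref{lem:Sn-N-diverge}, so it does not affect convergence in probability.

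The bound thus holds on all of $\{N(m)\ge1\}$, regardless of the selection. Combining the negligible difference with Lemma~\ref{lem:clt-ftilde} via Slutsky then completes the argument.
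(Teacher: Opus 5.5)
Your proposal is correct and follows the paper's proof: both reduce the claim to Lemma~\ref{lem:clt-ftilde} via Slutsky's lemma, by showing $\sqrt{|m|_1}\,(\widehat{f}_r(m)-\widetilde{f}_r(m))\to 0$ in probability from the $\mathcal{O}(1/N(m))$ gap of Remark~\ref{rem:fhat-ftilde} and $N(m)/|m|_1\to\mu_\lambda>0$. Your explicit bound $2/(N(m)+1)$, which uses $N_{x^i}(m)\le N_{A(u)}(m)$, and your handling of the non-unique case $N_{A(u)}(m)=0$ and of $N(m)=0$ make rigorous details that the paper leaves implicit.
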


\begin{proof}
By Remark~\ref{rem:fhat-ftilde} and $N(m)/|m|_1\to\mu_\lambda>0$, we have $N(m)\to\infty$ and thus
$\sqrt{|m|_1}\,\|\widehat{f}_r(m)-\widetilde{f}_r(m)\|\to 0$ in probability.
The conclusion follows from Lemma~\ref{lem:clt-ftilde} and Slutsky's lemma.
\end{proof}

\noindent The same estimation scheme can be applied to the embedded batch-free chain.
\begin{remark}
If the model allows instantaneous renewals (i.e.\ $f(0_d)>0$), one may alternatively work with the
embedded (batch-free) increment distribution $f^{*}$ defined in Remark~\ref{rem:no-batch}.
The estimators above can be applied to $f^{*}$ after removing the zero increments from the observed
sequence, and then converted back to an estimator for $f$ via
$f=p_0\,\ez+(1-p_0)f^{*}$ with $p_0=f(0_d)$.
\end{remark}

\section{Applications}\label{sec:Applications}

This section illustrates several applications of the multi-index convolution and multi-time renewal framework.
We first derive a binomial--multiset identity as a direct specialization of the inverse-coefficient
formula in Theorem~\ref{thm:inverse-exists}, and we explain how it can be used as a simple regression
test for convolutional inversion routines.
We then show how multi-time renewal functions provide closed expressions for expected costs under
two-attribute (calendar--usage) \emph{non-renewing} warranty policies.
Finally, we present (i)~a bivariate alternating-renewal availability computation in discrete time
and (ii)~a discretization-based method for approximating continuous-time bivariate renewal and
availability quantities.

\subsection{A binomial--multiset identity}\label{subsec:comb-identity}

The coefficientwise inverse formula in Theorem~\ref{thm:inverse-exists} yields a concrete
binomial--multiset identity.

Let $\sd:\mathbb{N}^d\to\mathbb{R}$ be the constant-one sequence, $\sd(k)\equiv 1$, and let
$\delta_d:=\sd^{(-1)}$ be its convolutional inverse. By Example~\ref{ex:summation-difference},
\begin{equation}\label{eq:delta-d-explicit}
\delta_d(k)=
\begin{cases}
(-1)^{|k|_1}, & k\in\{0,1\}^d,\\
0, & \text{otherwise}.
\end{cases}
\end{equation}

On the other hand, applying \eqref{eq:inverse-binomial} in Theorem~\ref{thm:inverse-exists} to
$a=\sd$ (note that $\sd(0_d)=1$ and $\widetilde a=\sd$) gives, for every $k\in\mathbb{N}^d$,
\begin{equation}\label{eq:delta-d-inverse-sum}
\delta_d(k)
=
\sum_{n=0}^{|k|_1}
(-1)^n
\binom{|k|_1+1}{n+1}\,
\sd^{(n)}(k).
\end{equation}
By Proposition~\ref{prop:sd-nfold}, for $n\ge 1$,
\begin{equation}\label{eq:sd-nfold-product}
\sd^{(n)}(k)=\prod_{u=1}^d \multiset{n}{k_u}.
\end{equation}
Also, $\sd^{(0)}=\ez$, so $\sd^{(0)}(k)=\mathbbm{1}_{\{k=0_d\}}$.
Equivalently, \eqref{eq:sd-nfold-product} continues to hold for $n=0$ if one adopts the convention
$\multiset{0}{0}=1$ and $\multiset{0}{m}=0$ for $m\ge 1$.

Substituting \eqref{eq:sd-nfold-product} into \eqref{eq:delta-d-inverse-sum} and using
\eqref{eq:delta-d-explicit} yields the $d$-dimensional identity
\begin{equation}\label{eq:multiindex-comb-identity}
\sum_{n=0}^{|k|_1}
(-1)^n
\binom{|k|_1+1}{n+1}\,
\prod_{u=1}^d \multiset{n}{k_u}
=
\begin{cases}
(-1)^{|k|_1}, & k\in\{0,1\}^d,\\
0, & \text{otherwise}.
\end{cases}
\end{equation}
For $d=1$, \eqref{eq:multiindex-comb-identity} reduces to the classical univariate binomial--multiset identity.

\paragraph{Computational regression test (explicit link to inversion routines).}
Identity~\eqref{eq:multiindex-comb-identity} is not only a combinatorial by-product: it provides a
useful sanity check for implementations of convolutional inversion.
Indeed, for $a=\sd$ the inverse is known explicitly from Example~\ref{ex:summation-difference}, namely
$\sd^{(-1)}=\delta_d$ with support contained in $\{0,1\}^d$; at the same time,
Theorem~\ref{thm:inverse-exists} and Proposition~\ref{prop:sd-nfold} imply the representation
\eqref{eq:multiindex-comb-identity}. Thus any routine that computes $a^{(-1)}$
(e.g.\ the recursion of Proposition~\ref{prop:inv-recursion} or the Newton update \eqref{eq:newton-multi})
should reproduce $\delta_d$ on the computed grid.

Concretely, fix a truncation box $\{0,\ldots,K\}^d$ and:
(i) compute an approximation $b$ to $\sd^{(-1)}$ on this box;
(ii) verify coefficientwise that $b(k)=\delta_d(k)$ for all $k$ in the box; and
(iii) multiply back and check the residual
\begin{equation}\label{eq:inverse-check-residual}
(\sd*b)(k)-\ez(k)=0,
\qquad
k\in\{0,\ldots,K\}^d.
\end{equation}
If FFTs are used to evaluate \eqref{eq:inverse-check-residual}, the convolution must be computed as a
\emph{linear} convolution (hence requires zero-padding to prevent circular aliasing; see
Remark~\ref{rem:fft-cost} and Appendix~\ref{app:fft}).

\subsection{Application to a two-dimensional non-renewing warranty model}\label{subsec:warranty}

We illustrate how the multi-time renewal function can be used to evaluate expected costs under
two-attribute (calendar--usage) \emph{non-renewing} warranty policies, following the bivariate
point-process viewpoint of \cite{murthy1995two} (see also \cite{jack2009repair} for
related usage-rate strategies).

\paragraph{Renewal model and claim count.}
Let $X=(X^{[1]},X^{[2]})$ denote the bivariate lifetime of a product, where $X^{[1]}$ is the
calendar age (in months) and $X^{[2]}$ is the accumulated usage measured in discrete blocks
(e.g.\ one block corresponds to $50$ hours of use). After each failure, the item is replaced
(or perfectly repaired) so that successive lifetimes are i.i.d.\ with the same law as $X$.

Let $(X_n)_{n\ge 1}$ be i.i.d.\ copies of $X$ and define the renewal epochs
\begin{equation*}
S_0=0_2,
\qquad
S_n=\sum_{i=1}^n X_i,
\quad n\ge 1.
\end{equation*}
For a warranty coverage region $\mathcal{R}\subset\mathbb{N}^2$, define the number of covered claims by
\begin{equation*}
N_{\mathcal{R}}
:=
\sum_{n\ge 1}\mathbbm{1}_{\{S_n\in\mathcal{R}\}},
\qquad
M_{\mathcal{R}}:=\mathbbm{E}[N_{\mathcal{R}}].
\end{equation*}
Assuming a constant claim cost $c>0$, the expected warranty cost is $C_{\mathcal{R}}:=c\,M_{\mathcal{R}}$.

\paragraph{Sojourn-time distribution.}
In the numerical illustration below, we assume that $X$ follows the bivariate discrete Weibull
model of Appendix~\ref{subsec:DISC} (see Definition~\ref{def:bdw}).
Its pmf $f$ is obtained from its survival representation via the mixed finite-difference identity
\eqref{eq:pmf-from-survival}.
We take $q_1=0.2$, $q_2=0.3$, $\theta=0.7$, $\beta_1=1$, $\beta_2=2$ and $c=5$.

\paragraph{Policy A (rectangular warranty).}
For limits $k_1,k_2\in\mathbb{N}$, consider the rectangle
$\mathcal{R}_A(k_1,k_2)=\{0,\ldots,k_1\}\times\{0,\ldots,k_2\}$.
Because $(S_n)_{n\ge 0}$ is coordinatewise nondecreasing, one has
$N_{\mathcal{R}_A}(k_1,k_2)=N(k_1,k_2)$ and $M_{\mathcal{R}_A}(k_1,k_2)=M(k_1,k_2)$,
where $M$ is the bivariate renewal function of Section~\ref{sec:multitime-renewal-theory}.

We compute $M$ from the explicit representation
$M=(\ez-f)^{(-1)}*F$ (Corollary~\ref{cor:M-explicit})
using FFT-accelerated convolutions (Appendix~\ref{app:fft}) and convolutional inversion
(Proposition~\ref{prop:inv-recursion} or Newton iteration \eqref{eq:newton-multi}).
Table~\ref{tab:warrantyA} reports $C_A(k_1,k_2)=c\,M(k_1,k_2)$.

\begin{table}[h!]
\centering
\begin{tabular}{r|rrrrr}
\hline
       & \multicolumn{5}{c}{$k_2$} \\
$k_1$  & 5        & 10       & 20        & 50        & 100       \\
\hline
5   & 17.01078 & 19.99986 & 20.00000 & 20.00000 & 20.00000 \\
10  & 18.84488 & 35.50032 & 39.99999 & 40.00000 & 40.00000 \\
20  & 18.85741 & 37.96875 & 73.04510 & 80.00000 & 80.00000 \\
50  & 18.85741 & 37.96918 & 76.19198 & 186.94410 & 200.00000 \\
100 & 18.85741 & 37.96918 & 76.19198 & 190.86040 & 377.87600 \\
\hline
\end{tabular}
\caption{Expected warranty cost under Policy A: $C_A(k_1,k_2)=c\,M(k_1,k_2)$.}\label{tab:warrantyA}
\end{table}

\paragraph{Policy B (coverage until both limits are exceeded).}
Here coverage is guaranteed for at least $k_1$ months and at least $k_2$ usage blocks,
and ends only when \emph{both} coordinates exceed their limits. Equivalently, the coverage region is
\begin{equation*}
\mathcal{R}_B(k_1,k_2)
=
\big(\{0,\ldots,k_1\}\times\mathbb{N}\big)\,\cup\,
\big(\mathbb{N}\times\{0,\ldots,k_2\}\big).
\end{equation*}
Let $N^{[1]}(k_1)$ and $N^{[2]}(k_2)$ be the marginal counting processes, and recall that
$N(k_1,k_2)=\min\{N^{[1]}(k_1),N^{[2]}(k_2)\}$.
Since a claim at epoch $S_n$ is covered whenever $S_n^{[1]}\le k_1$ or $S_n^{[2]}\le k_2$,
the number of covered failures satisfies
\begin{equation*}
N_B(k_1,k_2)
=
\max\{N^{[1]}(k_1),\,N^{[2]}(k_2)\}
=
N^{[1]}(k_1)+N^{[2]}(k_2)-N(k_1,k_2).
\end{equation*}
Taking expectations gives
\begin{equation*}
M_B(k_1,k_2)
=
M^{[1]}(k_1)+M^{[2]}(k_2)-M(k_1,k_2),
\qquad
C_B(k_1,k_2)=c\,M_B(k_1,k_2).
\end{equation*}
Table~\ref{tab:warrantyB} reports $C_B$.

\begin{table}[h!]
\centering
\begin{tabular}{r|rrrrr}
\hline
       & \multicolumn{5}{c}{$k_2$} \\
$k_1$  & 5        & 10       & 20        & 50        & 100       \\
\hline
5   & 21.84664 & 37.97033 & 76.19198 & 190.86040 & 381.97440 \\
10  & 40.01254 & 42.46886 & 76.19199 & 190.86040 & 381.97440 \\
20  & 80.00000 & 80.00043 & 83.14688 & 190.86040 & 381.97440 \\
50  & 200.00000 & 200.00000 & 200.00000 & 203.91630 & 381.97440 \\
100 & 400.00000 & 400.00000 & 400.00000 & 400.00000 & 404.09840 \\
\hline
\end{tabular}
\caption{Expected warranty cost under Policy B.}\label{tab:warrantyB}
\end{table}

\paragraph{Policy C (two-rectangle extension).}
Given $k_1<k_2$ and $\ell_1<\ell_2$, consider the L-shaped region
\begin{equation*}
\mathcal{R}_C
=
\mathcal{R}_A(k_1,\ell_2)\cup \mathcal{R}_A(k_2,\ell_1).
\end{equation*}
By inclusion--exclusion and the identity
$N\big(\min\{(k_1,\ell_2),(k_2,\ell_1)\}\big)=N(k_1,\ell_1)$, the corresponding claim count satisfies
\begin{equation*}
N_C
=
N(k_1,\ell_2)+N(k_2,\ell_1)-N(k_1,\ell_1)
=
\max\{N(k_1,\ell_2),\,N(k_2,\ell_1)\}.
\end{equation*}
Therefore,
\begin{equation*}
M_C
=
M(k_1,\ell_2)+M(k_2,\ell_1)-M(k_1,\ell_1),
\qquad
C_C=c\,M_C.
\end{equation*}
Table~\ref{tab:warrantyC} reports $C_C$ for the choice
$k_1=\lfloor 0.5\,k_2\rfloor$ and $\ell_1=\lfloor 0.5\,\ell_2\rfloor$.

\begin{table}[h!]
\centering
\scriptsize
\begin{tabular}{r|rrrrr}
\hline
       & \multicolumn{5}{c}{$\ell_2$} \\
$k_2$  & 5        & 10       & 20        & 50        & 100       \\
\hline
5   & 4.782576 & 13.37002 & 16.00000 & 16.00000 & 16.00000 \\
10  & 13.37744 & 17.65929 & 34.88300 & 36.00000 & 36.00000 \\
20  & 15.03626 & 33.55673 & 46.56045 & 88.15900 & 96.00000 \\
50  & 15.03685 & 34.14690 & 88.15903 & 99.32210 & 183.13420 \\
100 & 15.03685 & 34.14690 & 91.48110 & 183.13420 & 199.90400 \\
\hline
\end{tabular}
\caption{Expected warranty cost under Policy C with $k_1=\lfloor 0.5\,k_2\rfloor$ and $\ell_1=\lfloor 0.5\,\ell_2\rfloor$.}
\label{tab:warrantyC}
\end{table}

\paragraph{Computational validation.}
For each policy we validate the computed inversion/convolution routines by the
multiply-back residual check $(\ez-f)*(\ez-f)^{(-1)}=\ez$ on the working grid, and (for renewal
functions) by verifying monotonicity of $M(\cdot)$ in each coordinate
(up to small floating-point tolerances).

\subsection{Reliability and availability modelling in alternating renewal systems}\label{subsec:alt-renewal}

A repairable system often alternates between two mutually exclusive states:
an \emph{up} state (operational) and a \emph{down} state (under repair/maintenance).
In many applications, deterioration and maintenance depend simultaneously on more than one
exposure variable, such as calendar age and accumulated usage.
We model this on the bivariate grid $k_{1:2}=(k_1,k_2)\in\mathbb{N}^2$.

Let $(U_n)_{n\ge 1}$ and $(D_n)_{n\ge 1}$ be independent i.i.d.\ sequences of $\mathbb{N}^2$-valued
random vectors, where
$U_n=(U_n^{[1]},U_n^{[2]})$ and $D_n=(D_n^{[1]},D_n^{[2]})$ denote the $n$-th up-time and down-time
vector, respectively.
Define the cycle duration
\begin{equation*}
C_n:=U_n+D_n,
\qquad
S_0:=0_2,
\qquad
S_n:=\sum_{j=1}^n C_j,
\quad n\ge 1,
\end{equation*}
so that $(S_n)$ are renewal epochs of completed up--down cycles.

We use componentwise order: for $x,y\in\mathbb{R}^2$, write $x\le_2 y$ if $x_u\le y_u$ for $u=1,2$,
and write $x<_2 y$ if $x_u<y_u$ for $u=1,2$.
At index $k\in\mathbb{N}^2$, the system is operational if and only if there exists $n\ge 0$ such that
\begin{equation*}
S_n \le_2 k <_2 S_n + U_{n+1}.
\end{equation*}
The (point) availability function is therefore
\begin{equation}\label{eq:availability-def}
A(k)
:=
\mathbb{P}\big[\text{system is operational at }k\big]
=
\sum_{n=0}^{\infty}\mathbb{P}\big[S_n\le_2 k <_2 S_n+U_{n+1}\big],
\
k\in\mathbb{N}^2.
\end{equation}
If $C_1$ satisfies $\mathbb{P}(C_1=0_2)=0$, then $|S_n|_1\ge n$ and only finitely many terms in
\eqref{eq:availability-def} can be nonzero for any fixed $k$; otherwise the series is infinite but
well-defined by monotone convergence.

\paragraph{Renewal equation.}
Let $f_C$ denote the pmf of $C_1$ (so $f_C=f_U*f_D$ in the multi-index convolution sense).
Define the \emph{bivariate survival-on-the-grid} sequence of the up-time by
\begin{equation}\label{eq:SU-def}
S_U(k)
:=
\mathbb{P}\big(U_1>_2 k\big)
=
\mathbb{P}\big(U_1^{[1]}>k_1,\ U_1^{[2]}>k_2\big),
\qquad
k\in\mathbb{N}^2.
\end{equation}
Note that $S_U$ is the standard bivariate survival function; in general $S_U(k)\neq 1-F_U(k)$
when $U_1^{[1]}$ and $U_1^{[2]}$ are dependent.

Conditioning on the first cycle length $C_1$ yields the bivariate renewal equation
\begin{equation}\label{eq:availability-renewal-eq}
A = S_U + f_C*A,
\end{equation}
and hence, by the general solution of multi-time renewal equations
(Theorem~\ref{thm:general-renewal-solution}),
\begin{equation}\label{eq:availability-solution}
A=(\ez-f_C)^{(-1)}*S_U.
\end{equation}
This expresses availability as a convolution between a renewal structure (through $(\ez-f_C)^{(-1)}$)
and an ``input'' sequence $S_U$ describing up-time reliability.

\paragraph{Numerical illustration (bivariate discrete Weibull cycles).}
We illustrate \eqref{eq:availability-solution} when $U_n$ and $D_n$ follow the bivariate discrete Weibull
model (Definition~\ref{def:bdw}). Specifically, we take
\begin{equation*}
q^{U}=(0.8,0.75),
\
\beta^{U}=(1,1),
\
\theta^{U}=0.95,
\
q^{D}=(0.7,0.6),
\
\beta^{D}=(1,1),
\
\theta^{D}=0.8.
\end{equation*}
Table~\ref{tab:avail-discrete-weibull} reports values of $A(k_1,k_2)$ for representative
calendar/usage combinations.
\begin{table}[h!]
\centering
\begin{tabular}{|c|ccccc|}
\hline
$A(k_{1:2})$ & 2 & 20 & 80 & 520 & 1040 \\
\hline
2    & 0.7000 & 0.001276 & $1.88\times10^{-12}$ & $8.31\times10^{-18}$ & $1.79\times10^{-17}$ \\
20   & $4.08\times10^{-3}$ & 0.06938 & $2.79\times10^{-5}$ & $2.16\times10^{-17}$ & $3.75\times10^{-17}$ \\
80   & $2.88\times10^{-10}$ & $1.36\times10^{-3}$ & 0.02313 & $3.80\times10^{-17}$ & $3.62\times10^{-17}$ \\
520  & $1.30\times10^{-18}$ & $9.41\times10^{-18}$ & $4.64\times10^{-17}$ & $1.58\times10^{-3}$ & $1.27\times10^{-16}$ \\
1040 & $1.12\times10^{-17}$ & $1.82\times10^{-17}$ & $3.54\times10^{-17}$ & $8.37\times10^{-7}$ & $1.42\times10^{-4}$ \\
\hline
\end{tabular}
\caption{Availability values for the discrete bivariate Weibull alternating-renewal model.}
\label{tab:avail-discrete-weibull}
\end{table}
\subsection{Approximation of the continuous-time bivariate renewal function}\label{subsec:cont-approx}

\paragraph{Continuous-time bivariate renewal equation.}
Let $(X_n)_{n\ge 1}$ be an i.i.d.\ sequence of $\mathbb{R}_+^2$-valued random vectors,
$X_n=(X_n^{[1]},X_n^{[2]})$, with joint cdf $F$ and (when it exists) joint density $f$.
Define the renewal epochs
\begin{equation*}
S_0:=0_2,
\qquad
S_n:=\sum_{j=1}^n X_j=(S_n^{[1]},S_n^{[2]}),
\quad n\ge 1,
\end{equation*}
and the bivariate counting process
\begin{equation*}
N(x_1,x_2):=\sup\{n\in\mathbb{N}:\ S_n\le_2 (x_1,x_2)\}.
\end{equation*}
The bivariate renewal function is $M(x_1,x_2):=\mathbb{E}[N(x_1,x_2)]$.
Hunter's bivariate renewal theory shows that $M$ satisfies the renewal integral equation
\begin{equation}\label{eq:renewal-integral}
M(x_{1:2})
=
F(x_{1:2})
+
\int_{0}^{x_1}\int_{0}^{x_2}
M(x_1-t_1,x_2-t_2)\, dF(t_1,t_2),
\qquad x_{1:2}\in\mathbb{R}_+^2.
\end{equation}
Introduce the bivariate Riemann--Stieltjes convolution operator
\begin{equation*}
[G\star F](x_{1:2})
:=
\int_{0}^{x_1}\int_{0}^{x_2}
G(x_1-t_1,x_2-t_2)\, dF(t_1,t_2),
\end{equation*}
so that \eqref{eq:renewal-integral} reads $M=F+M\star F$.
Iterating yields the series representation
\begin{equation}\label{eq:renewal-series}
M(x_{1:2})=\sum_{n=1}^{\infty}F^{\star n}(x_{1:2}),
\end{equation}
where $F^{\star n}$ denotes the $n$-fold Stieltjes convolution (the cdf of $S_n$).

\paragraph{Grid discretization and reduction to the discrete convolution framework.}
Fix step sizes $h_1,h_2>0$ and consider the grid points $x_{1:2}=(h_1k_1,h_2k_2)$ with $k_{1:2}\in\mathbb{N}^2$.
Define the discretized pmf on $\mathbb{N}^2$ by the rectangle increment of $F$:
\begin{equation}\label{eq:fh-cell}
\begin{aligned}
f_{h_{1:2}}(k_{1:2})
:={}&
F(h_1k_1,h_2k_2)
-F(h_1(k_1-1),h_2k_2) \\
&-F(h_1k_1,h_2(k_2-1))
+F(h_1(k_1-1),h_2(k_2-1)),
\ \ k_{1:2}\in\mathbb{N}^2,
\end{aligned}
\end{equation}
with the convention $F(x_1,x_2)=0$ if $x_1<0$ or $x_2<0$.
This is the distribution of the discretized vector
$X^{h_{1:2}}=(\lceil X^{[1]}/h_1\rceil,\lceil X^{[2]}/h_2\rceil)$.
Also define the sampled cdf sequence
\begin{equation*}
F_{h_{1:2}}(k_{1:2})
:=
F(h_1k_1,h_2k_2)
=
[\sd*f_{h_{1:2}}](k_{1:2}).
\end{equation*}

Replacing the bivariate Stieltjes convolution in \eqref{eq:renewal-integral} by its grid version
based on the cell probabilities \eqref{eq:fh-cell} leads, at grid points, to the discrete renewal equation
\begin{equation}\label{eq:renewal-discrete-approx}
M_{h_{1:2}} = F_{h_{1:2}} + f_{h_{1:2}} * M_{h_{1:2}}.
\end{equation}
Its unique solution is
\begin{equation}\label{eq:Mh-solution}
M_{h_{1:2}}
=
(\ez-f_{h_{1:2}})^{(-1)}*F_{h_{1:2}}.
\end{equation}
Thus, computing $M(\cdot)$ on a grid is reduced to computing discrete multi-index convolutions and
convolutional inverses, as developed in Section~\ref{sec:convolution}.

\paragraph{Numerical illustration (FGM bivariate exponential).}
In the numerical results that follow, we evaluate the diagonal ratio $M(t,t)/t$ up to $t=16$
using the discretization \eqref{eq:Mh-solution} with $h_1=h_2=0.01$ for the FGM bivariate exponential model
(Definition~\ref{def:fgm-exp}). We consider $\rho\in\{-0.8,0,0.8\}$ and three parameter settings:
(i) $(\lambda_1,\lambda_2)=(1,1)$,
(ii) $(\lambda_1,\lambda_2)=(5,4)$,
(iii) $(\lambda_1,\lambda_2)=(6,7)$.

Along the diagonal, the strong law for the marginal renewal processes implies
\begin{equation*}
\lim_{t\to\infty}\frac{M(t,t)}{t}
=
\min\left\{\frac{1}{\mu_1},\frac{1}{\mu_2}\right\}
=
\frac{1}{\max\{\mu_1,\mu_2\}},
\qquad
\mu_u:=\mathbb{E}[X_1^{[u]}].
\end{equation*}
For each fixed $(\lambda_1,\lambda_2)$, the curves corresponding to $\rho\in\{-0.8,0,0.8\}$
converge to the same asymptotic limit; the effect of dependence is mainly visible in the
transient regime. The figures plot $\log(M(t,t)/t)$ in order to emphasize these differences.

\begin{figure}[H]
  \centering
  \includegraphics[width=1\textwidth]{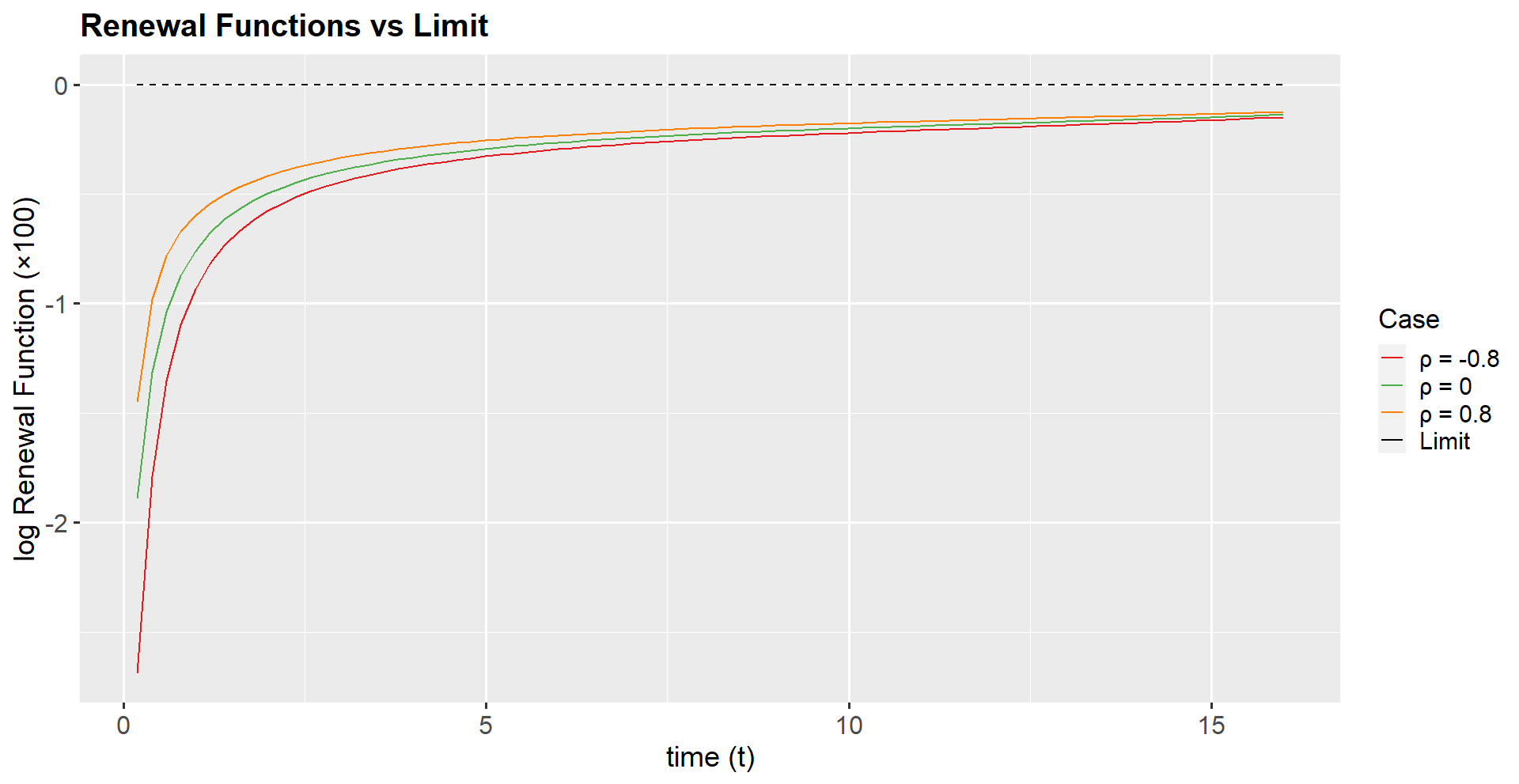}
  \caption{Logarithm of the diagonal ratio $M(t,t)/t$ for the FGM bivariate exponential model: $(\lambda_1,\lambda_2)=(1,1)$.}
  \label{fig:renewal-limit1}
\end{figure}

\begin{figure}[H]
  \centering
  \includegraphics[width=1\textwidth]{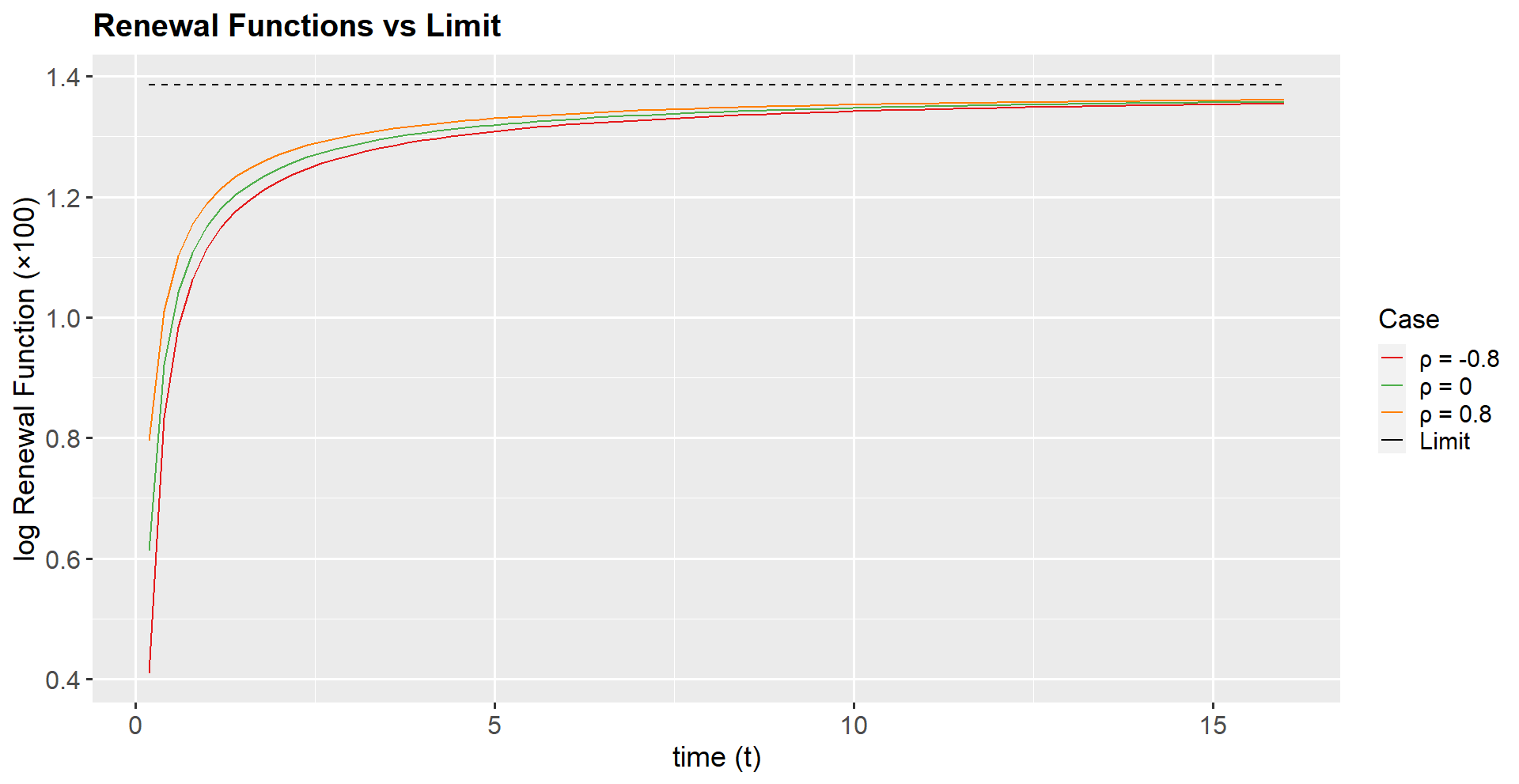}
  \caption{Logarithm of the diagonal ratio $M(t,t)/t$ for the FGM bivariate exponential model: $(\lambda_1,\lambda_2)=(5,4)$.}
  \label{fig:renewal-limit2}
\end{figure}

\begin{figure}[H]
  \centering
  \includegraphics[width=1\textwidth]{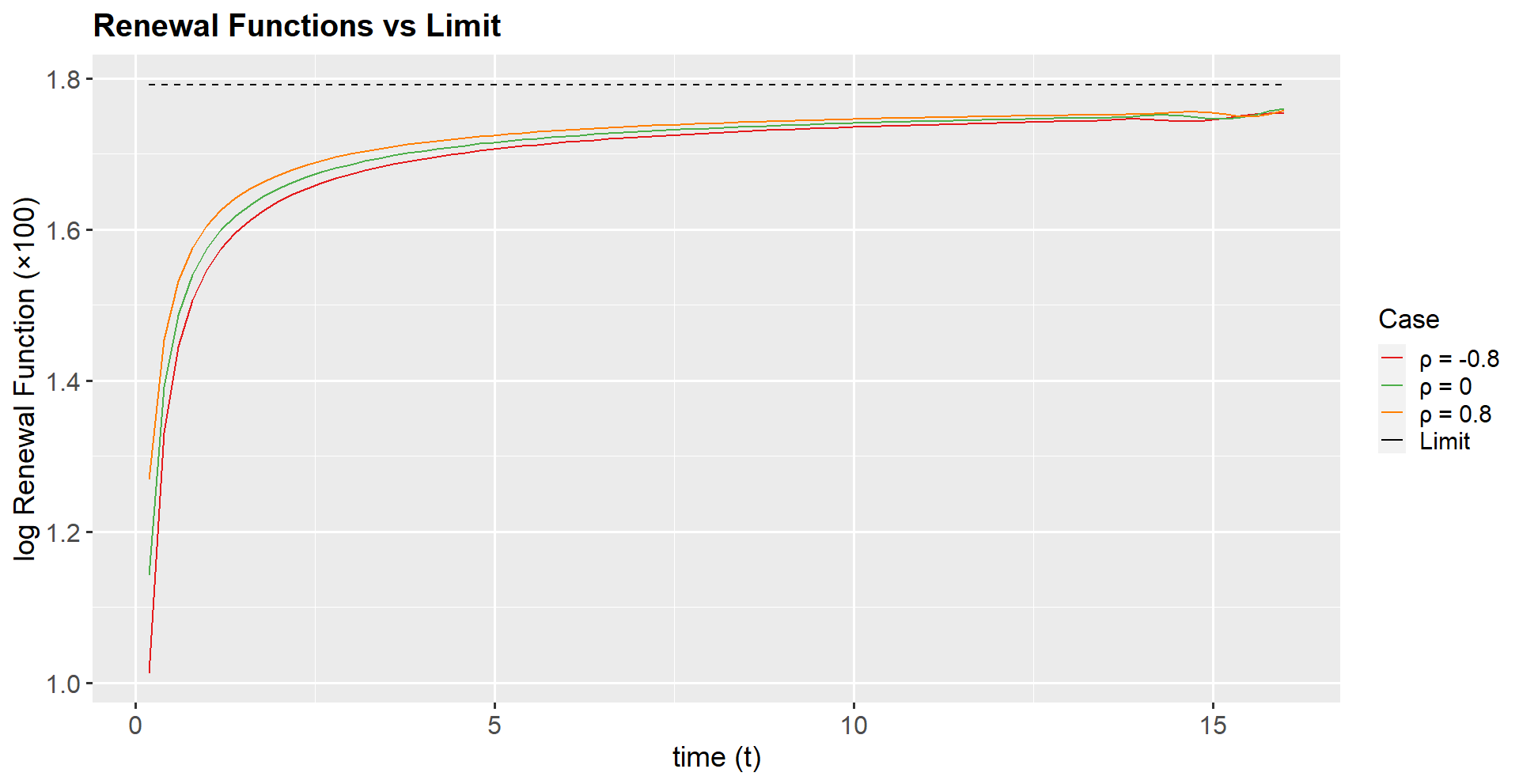}
  \caption{Logarithm of the diagonal ratio $M(t,t)/t$ for the FGM bivariate exponential model: $(\lambda_1,\lambda_2)=(6,7)$.}
  \label{fig:renewal-limit3}
\end{figure}

\paragraph{Continuous-time bivariate availability and its discrete approximation.}
A second quantity of interest is the availability of a continuous-time alternating renewal system on $\mathbb{R}_+^2$.
Let $(U_n)_{n\ge 1}$ and $(D_n)_{n\ge 1}$ be independent i.i.d.\ sequences in $\mathbb{R}_+^2$ with cdfs $F_U,F_D$,
and define $C_n:=U_n+D_n$ and $S_n:=\sum_{j=1}^n C_j$.
The point availability at $x_{1:2}\in\mathbb{R}_+^2$ is
\begin{equation*}
A(x_{1:2})
\!:=\!
\mathbb{P}\big[\exists n\ge 0: S_n\le_2 x_{1:2} <_2 S_n+U_{n+1}\big]
\!=\!
\!\sum_{n=0}^{\infty}\!\mathbb{P}\big[S_n\le_2 x_{1:2} <_2 S_n+U_{n+1}\!\big].
\end{equation*}
It satisfies the bivariate renewal integral equation (see \cite{yang2001bivariate})
\begin{equation}\label{eq:avail-cont}
A(x_{1:2})
=
S_U(x_{1:2})
+
[A\star F_C](x_{1:2}),
\end{equation}
where $S_U(x_{1:2})=\mathbb{P}(U_1>_2 x_{1:2})$ is the bivariate survival function of $U_1$ and $F_C$ is the cdf of $C_1$.

Following the discretization principle above (see also \cite{sarada2021bi}),
sample on the grid $x_{1:2}=(h_1k_1,h_2k_2)$:
\begin{equation*}
S_{U,h_{1:2}}(k_{1:2})
:=
S_U(h_1k_1,h_2k_2),
\end{equation*}
and define $f_{h_{1:2},C}$ from $F_C$ via the rectangle increment formula \eqref{eq:fh-cell}.
Then the discrete approximation of availability is obtained from the discrete renewal equation
\begin{equation*}
A_{h_{1:2}}
=
S_{U,h_{1:2}} + f_{h_{1:2},C}*A_{h_{1:2}},
\qquad\text{so that}\qquad
A_{h_{1:2}}=(\ez-f_{h_{1:2},C})^{(-1)}*S_{U,h_{1:2}}.
\end{equation*}

\paragraph{Monte Carlo check.}
To validate the discretization numerically at a point $x_{1:2}$, one may simulate i.i.d.\ alternating cycles
$(U_n,D_n)$, form $C_n$ and $S_n$, stop at the largest $n$ such that $S_n\le_2 x_{1:2}$, and record whether
$x_{1:2}<_2 S_n+U_{n+1}$. Averaging this indicator over many independent replications yields a Monte Carlo estimate of $A(x_{1:2})$.

We apply the methodology to the Moran--Downton bivariate exponential distribution (Definition~\ref{def:downton-exp}).
Availability values are computed using both the discrete renewal scheme and a Monte Carlo simulation with $10{,}000$
independent replications, evaluated along the diagonal $(x,x)$, $x=1,2,4,8,16$, with step size $h=0.01$:

\begin{table}[h!]
\centering
\begin{tabular}{|c|c|c|}
\hline
$x_{1:2}$ & Discrete renewal $A_{h}(k,k)$ & Monte Carlo \\
\hline
(1,1)   & 0.2497 & 0.2502 \\
(2,2)   & 0.1276 & 0.1263 \\
(4,4)   & 0.0870 & 0.0841 \\
(8,8)   & 0.0539 & 0.0496 \\
(16,16) & 0.0288 & 0.0268 \\
\hline
\end{tabular}
\caption{Comparison of continuous-time availability estimates along the diagonal.}
\label{tab:avail-mc}
\end{table}

\paragraph{Computational remark (link to FFT methods).}
Evaluating $A_h$ or $M_h$ on a full $K\times K$ grid requires repeated 2D convolutions.
A direct implementation scales on the order of $\mathcal{O}(K^4)$ operations, whereas FFT-based convolution
(Remark~\ref{rem:fft-cost} and Appendix~\ref{app:fft}) reduces each convolution to about
$\mathcal{O}(K^2\log K)$ operations, which is crucial for large grids induced by small step sizes $h_1,h_2$.

\section{Discussion}\label{sec:discussion}

This paper develops a discrete-time renewal theory on the multi-index lattice $\mathbb{N}^d$ and shows that several classical renewal identities admit multi-time analogues when the algebra is formulated in terms of multi-index convolution. A key point is that multi-time renewal equations are linear convolution equations in a commutative ring of sequences, and therefore admit explicit solutions whenever the corresponding convolutional inverse exists. This formulation gives a common treatment of renewal functions and of reliability/availability functionals, and provides a coefficient-level counterpart to the transform-based and integral-equation approaches used in the continuous-time bivariate literature \cite{hunter1974renewal,hunter1974renewal1,hunter1977renewal}.

From a computational point of view, the multi-index formulation is useful because it leads to effective finite-grid computations. Direct evaluation of $d$-dimensional convolutions becomes expensive as the grid size increases, whereas FFT-based convolution reduces the cost substantially, up to logarithmic factors. Similarly, computing convolutional inverses by coefficientwise recursion becomes costly on large grids, while Newton-type reciprocal iteration doubles the truncation order at each step. Thus FFT-accelerated products and Newton-type inversion give an effective numerical method for multi-time renewal computations on grids where direct summation is not feasible. In the bivariate continuous-time setting, several approximation methods have been proposed for the renewal function \cite{hadji2015two,arunachalam2015approximation}; the present results show how grid discretizations lead to finite convolution equations and to inverse problems on the lattice, with no restriction to two coordinates.

On the asymptotic side, the proportional-growth $k/|k|_1\to\lambda$ yields directional limits that extend classical renewal strong laws to multi-time indices. The CLT for additive functionals gives fluctuations of cycle-accumulated performance measures under multi-time truncation. The counting process $N(k)$ itself is the minimum of strongly dependent marginal counts, and a Gaussian limit for $N(k)$ is obtained under an additional uniqueness condition, namely when a single rate-determining coordinate attains the minimum. The case where several coordinates attain the same limiting rate remains a natural problem. In such a case, the limiting fluctuations are expected to involve minima of correlated Gaussian variables, in agreement with the role of joint bivariate asymptotics in Hunter's two-dimensional theory \cite{hunter1974renewal1} and with multivariate renewal fluctuation results \cite{niculescu1984asymptotic}. A remaining question is to combine joint functional CLTs for vectors of marginal renewal counts with the minimum map in order to obtain the corresponding non-normal limits.

We also considered inference from fixed-horizon observations. Observing the completed increments together with the multi-time vector $U_m$ induces a multivariate censoring event: the next increment exceeds the remaining budget in at least one coordinate. This leads to an explicit exact nonparametric MLE, which differs from the empirical estimator only by a term of order $\mathcal{O}(1/N(m))$ and is asymptotically normal under the same scaling regime. This complements existing inferential work in multi-time renewal theory that focuses on estimating renewal functions themselves \cite{harel2019asymptotic}. Further statistical questions include parametric families, goodness-of-fit procedures, and estimation from several independent trajectories. Another direction is to combine the fixed-horizon likelihood with weighted renewal functionals, as in multi-attribute replacement models \cite{mallor2007multivariate}, where the input term in the renewal equation is also estimated from data.

Finally, the discrete-to-continuous approximation considered here can be refined by studying discretization error as a function of the mesh size and of the regularity of the underlying continuous distribution. More generally, the same idea extends to higher-dimensional continuous-time renewal equations, where the renewal operator is a multivariate Riemann--Stieltjes convolution and explicit formulas are rarely available beyond special cases \cite{hunter1974renewal1}. Error bounds for high-dimensional grid schemes, as well as adaptive discretizations near the coordinate axes, would strengthen the connection between continuous-time models and their lattice approximations.

Beyond the i.i.d. setting, the same convolutional calculus extends to matrix-valued kernels arising in Markov-modulated or state-dependent multi-time renewal models. In that case, scalar renewal kernels are replaced by matrix-valued kernels, and the renewal equations become linear equations in a matrix-convolution algebra. The Fourier transform can still be applied entrywise, with matrix multiplication in the transformed domain. This could lead to a very interesting extension covering multi-time Markov renewal and semi-Markov models, in both discrete time and discretized continuous time.

Overall, the results give a discrete multi-time renewal theory with coefficient formulas, limit theorems, likelihood estimators, and numerical procedures. The approach is especially relevant for stochastic systems in which several exposure variables determine the renewal mechanism, in particular in reliability, maintenance, and warranty analysis.
\bibliographystyle{plainnat}
\bibliography{ref_arxiv}

\begin{appendices}
   \section{Bivariate distributions}\label{app:bivariate-distributions}

This appendix collects the bivariate distributions used in the numerical illustrations.
We write $x=(x_1,x_2)$, $\mathbb{R}_+:=[0,\infty)$, and $\mathbb{N}_0:=\{0,1,2,\ldots\}$.

\subsection{Continuous models}

\noindent The first continuous bivariate model used in the computations is the following.
\begin{definition}\label{def:downton-exp}
Let $\lambda_1,\lambda_2>0$ and $0\le \rho<1$.
A random vector $(X_1,X_2)$ taking values in $\mathbb{R}_+^2$ is said to follow the
Moran--Downton bivariate exponential distribution if it has joint density
\begin{equation}\label{eq:downton-pdf}
f_{X_1,X_2}(x_1,x_2)
=
\frac{\lambda_1\lambda_2}{1-\rho}
\exp\!\left(-\frac{\lambda_1 x_1+\lambda_2 x_2}{1-\rho}\right)\,
I_0\!\left(\frac{2\sqrt{\rho\,\lambda_1\lambda_2\,x_1x_2}}{1-\rho}\right),
\qquad x_1,x_2\in\mathbb{R}_+,
\end{equation}
where $I_0$ is the modified Bessel function of the first kind of order $0$,
\[
I_0(z)=\sum_{m=0}^\infty \frac{1}{(m!)^2}\left(\frac{z}{2}\right)^{2m}.
\]
Under the rate parametrization used in \eqref{eq:downton-pdf}, the marginals are exponential
with rates $\lambda_1$ and $\lambda_2$, and $\rho=0$ yields independence.
(Alternative scale parametrizations appear in the literature; see, e.g.,
\cite{downton1970bivariate,hanandeh2013inference}.)
\end{definition}

\noindent The second continuous bivariate model is the Farlie--Gumbel--Morgenstern exponential distribution.
\begin{definition}\label{def:fgm-exp}
Let $\lambda_1,\lambda_2>0$ and $\rho\in[-1,1]$.
Let
\[
F_X(x_1)=1-e^{-\lambda_1 x_1},\qquad
F_Y(x_2)=1-e^{-\lambda_2 x_2},\qquad x_1,x_2\in\mathbb{R}_+.
\]
The FGM bivariate exponential distribution is defined by the joint cdf
\begin{equation}\label{eq:fgm-cdf}
F(x_1,x_2)
=
F_X(x_1)F_Y(x_2)\Big[1+\rho\,(1-F_X(x_1))(1-F_Y(x_2))\Big],
\qquad (x_1,x_2)\in\mathbb{R}_+^2,
\end{equation}
whose joint density is
\begin{equation*}
f(x_1,x_2)
=
\lambda_1\lambda_2\,e^{-(\lambda_1 x_1+\lambda_2 x_2)}
\Big[1+\rho\,(1-2e^{-\lambda_1 x_1})(1-2e^{-\lambda_2 x_2})\Big],
\qquad x_1,x_2\in\mathbb{R}_+.
\end{equation*}
Equivalently, \eqref{eq:fgm-cdf} is obtained by combining the exponential marginals
with the FGM copula \cite{morgenstern1956einfache,nelsen2006introduction}.
The case $\rho=0$ corresponds to independence.
\end{definition}

\subsection{Discrete model}\label{subsec:DISC}

\noindent The following finite-difference identity recovers a discrete pmf from a survival function.
\begin{remark}\label{rem:survival-to-pmf}
Let $X=(X_1,X_2)$ be an $\mathbb{N}_0^2$-valued random vector and define the joint survival function
\[
S(k_1,k_2):=\mathbb{P}(X_1\ge k_1,\;X_2\ge k_2),\qquad (k_1,k_2)\in\mathbb{N}_0^2.
\]
Then the joint pmf $p(k_1,k_2)=\mathbb{P}(X_1=k_1,X_2=k_2)$ is obtained by the mixed finite difference
\begin{equation}\label{eq:pmf-from-survival}
p(k_1,k_2)
=
S(k_1,k_2)-S(k_1+1,k_2)-S(k_1,k_2+1)+S(k_1+1,k_2+1),
\qquad k_1,k_2\in\mathbb{N}_0.
\end{equation}
\end{remark}

\noindent The discrete bivariate Weibull model used in the numerical section is defined as follows.
\begin{definition}\label{def:bdw}
Let $q_1,q_2\in(0,1)$, $\beta_1,\beta_2>0$, and $\theta\in(0,1]$.
A random vector $X=(X_1,X_2)$ on $\mathbb{N}_0^2$ is said to follow the bivariate discrete Weibull
model (as used in this paper) if its joint survival function is
\begin{equation*}
S(k_1,k_2)
=
\mathbb{P}(X_1\ge k_1,\;X_2\ge k_2)
=
q_1^{k_1^{\beta_1}}\,q_2^{k_2^{\beta_2}}\,\theta^{\,k_1^{\beta_1}k_2^{\beta_2}},
\qquad (k_1,k_2)\in\mathbb{N}_0^2,
\end{equation*}
and the pmf is computed from $S$ via \eqref{eq:pmf-from-survival}.
The marginals satisfy $S(k_1,0)=q_1^{k_1^{\beta_1}}$ and $S(0,k_2)=q_2^{k_2^{\beta_2}}$,
so each component is a (univariate) discrete Weibull distribution in the sense of
Nakagawa--Osaki \cite{nakagawa1975discrete}.
The case $\theta=1$ yields independence (product survival).
\end{definition}

\noindent The parameters of this discrete model must satisfy at least the following admissibility condition.
\begin{remark}\label{rem:bdw-admissibility}
Under the convention in Remark~\ref{rem:survival-to-pmf}, nonnegativity of the mass at $(0,0)$ implies
\begin{equation*}
p(0,0)=1-q_1-q_2+q_1q_2\theta \ge 0
\qquad\Longleftrightarrow\qquad
\theta \ge \frac{q_1+q_2-1}{q_1q_2}.
\end{equation*}
More generally, parameters should be chosen so that $p(k_1,k_2)\ge 0$ for all $(k_1,k_2)\in\mathbb{N}_0^2$.
\end{remark}

   \section{Fast Fourier transform and multidimensional convolution}\label{app:fft}

This appendix summarizes the Fourier-transform identities used to accelerate multidimensional convolutions.
We distinguish between (i) the \emph{discrete-time Fourier transform} (DTFT) of an infinite sequence and (ii) the finite \emph{discrete
Fourier transform} (DFT) computed by the FFT.
Standard references include \cite{cooley1965algorithm,van1992computational}.

\subsection{Notation}
Let $\mathrm{i}:=\sqrt{-1}$.
For $\omega=(\omega_1,\ldots,\omega_d)\in\mathbb{R}^d$ and $k=(k_1,\ldots,k_d)\in\mathbb{Z}^d$, write
$k\cdot \omega := \sum_{u=1}^d k_u\,\omega_u$.
For a vector $N=(N_1,\ldots,N_d)\in\mathbb{N}^d$, define the index set
\[
I_N := \{0,\ldots,N_1-1\}\times\cdots\times\{0,\ldots,N_d-1\}.
\]
For $k\in\mathbb{Z}^d$, define componentwise reduction modulo $N$ by
\[
k \bmod N := (\,k_1\bmod N_1,\ldots,k_d\bmod N_d\,)\in I_N.
\]

\subsection{DTFT (infinite sequences) and convolution}
Let $a:\mathbb{Z}^d\to\mathbb{C}$ be absolutely summable, $a\in\ell^1(\mathbb{Z}^d)$ (in particular, any finitely supported sequence).
Its \emph{discrete-time Fourier transform} is the $2\pi$-periodic function
\[
\widehat a(\omega) := \sum_{k\in\mathbb{Z}^d} a(k)\,e^{-\mathrm{i}\,k\cdot\omega},
\qquad \omega\in[-\pi,\pi]^d.
\]
The inversion formula holds (in the $\ell^1$ setting) as
\[
a(k) = \frac{1}{(2\pi)^d}\int_{[-\pi,\pi]^d} \widehat a(\omega)\,e^{\mathrm{i}\,k\cdot\omega}\,d\omega,
\qquad k\in\mathbb{Z}^d.
\]
If $c=a*b$ denotes the (linear) convolution on $\mathbb{Z}^d$,
\[
(a*b)(k):=\sum_{\ell\in\mathbb{Z}^d} a(\ell)\,b(k-\ell),
\]
then the convolution theorem states
\[
\widehat{a*b}(\omega)=\widehat a(\omega)\,\widehat b(\omega).
\]

\subsection{Finite DFT and circular convolution}
For numerical computation we work with finite arrays supported on $I_N$.
Let $a:I_N\to\mathbb{C}$. Its $d$-dimensional \emph{DFT} is defined by
\begin{equation*}
\widehat a(m)
:= \sum_{k\in I_N} a(k)\,
\exp\!\left\{-2\pi \mathrm{i}\sum_{u=1}^d \frac{k_u m_u}{N_u}\right\},
\qquad m\in I_N.
\end{equation*}
The inverse DFT is
\begin{equation*}
a(k)
= \frac{1}{\prod_{u=1}^d N_u}\sum_{m\in I_N} \widehat a(m)\,
\exp\!\left\{2\pi \mathrm{i}\sum_{u=1}^d \frac{k_u m_u}{N_u}\right\},
\qquad k\in I_N.
\end{equation*}
\noindent The finite DFT corresponds to the following circular convolution.
\begin{definition}\label{def:circ-conv}
For $a,b:I_N\to\mathbb{C}$, their circular convolution $a*_N b:I_N\to\mathbb{C}$ is
\begin{equation*}
(a*_N b)(k)
:= \sum_{\ell\in I_N} a(\ell)\, b\big((k-\ell)\bmod N\big),
\qquad k\in I_N.
\end{equation*}
\end{definition}

With these definitions, the DFT convolution theorem holds:
\[
\widehat{a*_N b}(m)=\widehat a(m)\,\widehat b(m),
\qquad m\in I_N.
\]
Therefore, $a*_N b$ can be computed by two forward FFTs, pointwise multiplication in the frequency domain, and one inverse FFT.

\subsection{Linear convolution via zero-padding}
Let $a$ and $b$ be (finite) sequences supported on boxes of sizes $N=(N_1,\ldots,N_d)$ and $M=(M_1,\ldots,M_d)$, respectively.
To compute their \emph{linear} convolution on the full support $\{0,\ldots,N_1+M_1-2\}\times\cdots\times\{0,\ldots,N_d+M_d-2\}$,
choose padding lengths $L=(L_1,\ldots,L_d)$ satisfying
\[
L_u \ge N_u+M_u-1,\qquad u=1,\ldots,d,
\]
embed $a$ and $b$ into arrays on $I_L$ by zero-padding, compute the circular convolution on $I_L$, and then restrict the output to the
valid (non-aliased) index range. This is the standard FFT-based method for linear convolution.

\subsection{Complexity}
Let $L=(L_1,\ldots,L_d)$ be the FFT grid size and $N_L:=\prod_{u=1}^d L_u$.
A separable $d$-dimensional FFT (successive 1D FFTs along each dimension) has computational complexity
\[
\mathcal{O}\!\left(N_L \sum_{u=1}^d \log L_u\right),
\]
and thus FFT-based convolution on $I_L$ has the same order of complexity \cite{cooley1965algorithm,van1992computational}.

\noindent The same Fourier identities apply componentwise to matrix-valued sequences.
\begin{remark}
If $a(k)$ and $b(k)$ are matrices (of fixed size) and convolution is defined with matrix multiplication,
\[
(a*b)(k)=\sum_{\ell} a(\ell)\,b(k-\ell),
\]
then the DFT is applied entrywise and the convolution theorem remains valid with \emph{matrix products} in the frequency domain:
$\widehat c(m)=\widehat a(m)\,\widehat b(m)$, $m\in I_N$.
\end{remark}

\section{Newton's inversion method: proof of the update}\label{app:newton-proof}

We work in the commutative ring of multivariate formal power series
$\mathbb{R}[[x_1,\ldots,x_d]]$ and denote by
\[
\mathfrak{m}:=\langle x_1,\ldots,x_d\rangle
\]
its maximal ideal. Recall that $P_a$ is invertible in $\mathbb{R}[[x_1,\ldots,x_d]]$ if and only if its constant term is nonzero,
i.e.\ $a(0_d)\neq 0$.

Assume that at iteration level $N$ we have a truncated approximation $P_{b,N}$ satisfying
\begin{equation}\label{eq:newton-assumption}
P_a\,P_{b,N}=1 \quad \bmod \ \mathfrak{m}^{2^N}.
\end{equation}
Equivalently, the error term
\begin{equation}\label{eq:newton-error}
e_N:=1-P_a\,P_{b,N}
\end{equation}
belongs to $\mathfrak{m}^{2^N}$, i.e.\ all monomials appearing in $e_N$ have total degree at least $2^N$.

Define the Newton update
\begin{equation}\label{eq:newton-update-app}
P_{b,N+1}
:=P_{b,N}+P_{b,N}\,e_N
= P_{b,N}\,\bigl(2-P_aP_{b,N}\bigr)
\quad \bmod \ \mathfrak{m}^{2^{N+1}},
\end{equation}
which is the multivariate analogue of the classical reciprocal-series Newton iteration
(see, e.g., \cite{brent1978fast,von2013modern}).

\medskip
\noindent\textbf{Claim.}
The updated approximation satisfies
\[
P_a\,P_{b,N+1}=1 \quad \bmod \ \mathfrak{m}^{2^{N+1}}.
\]

\begin{proof}
Starting from \eqref{eq:newton-update-app} and using \eqref{eq:newton-error},
\begin{align*}
P_a\,P_{b,N+1}
&= P_a\bigl(P_{b,N}+P_{b,N}e_N\bigr)
 = P_aP_{b,N} + (P_aP_{b,N})e_N \\
&= (1-e_N) + (1-e_N)e_N
 = 1 - e_N^2.
\end{align*}
By assumption \eqref{eq:newton-assumption}, we have $e_N\in \mathfrak{m}^{2^N}$, and since
$\mathfrak{m}^{r}\,\mathfrak{m}^{s}\subseteq \mathfrak{m}^{r+s}$ in any commutative ring, it follows that
\[
e_N^2 \in \mathfrak{m}^{2^N}\cdot \mathfrak{m}^{2^N}\subseteq \mathfrak{m}^{2^{N+1}}.
\]
Therefore $P_aP_{b,N+1}=1 \bmod \mathfrak{m}^{2^{N+1}}$, proving the claim.
\end{proof}

\noindent The congruence in Newton inversion has the following coefficientwise meaning.
\begin{remark}\label{rem:newton-meaning}
The congruence $P_aP_{b,N}=1 \bmod \mathfrak{m}^{2^N}$ means that $P_{b,N}$ coincides with the exact inverse
$P_a^{-1}$ in all coefficients of monomials $x^k$ with total degree $|k|_1<2^N$.
Thus each Newton step doubles the truncation order (quadratic error reduction):
the new error is $e_{N+1}=e_N^2 \in \mathfrak{m}^{2^{N+1}}$.
\end{remark}

\end{appendices}

\end{document}